   \def\MR#1{}
\DeclareMathOperator{\GL}{GL}
\DeclareMathOperator{\Irr}{Irr}
\DeclareMathOperator{\pr}{pr}
\DeclareMathOperator{\wt}{wt}
\DeclareMathOperator{\red}{red}
\DeclareMathOperator{\FE}{FE}
\DeclareMathOperator{\op}{op}
\DeclareMathOperator{\supp}{supp}
\DeclareMathOperator{\de}{def}
\newcommand\F{\mathbb{F}}
\newcommand\fp{\mathfrak p}
\newcommand\aFq{\overline{\mathbb F}_q}
\newcommand\cA{\mathcal A}
\newcommand\cB{\mathcal B}
\newcommand\cO{\mathcal O}
\newcommand\cG{\mathcal G}
\newcommand\Gm{\mathbb G_m}
\newcommand\N{\mathbb N}
\newcommand\Q{\mathbb Q}
\newcommand\A{\mathbb A}
\newcommand\B{\mathbb B}
\newcommand\hG{\widehat G}
\newcommand\Z{\mathbb Z}
\newcommand\tW{\widetilde W}
\newcommand\te{\tilde e}
\newcommand\tf{\tilde f}
\newcommand\bl{\bullet}
\newcommand\tp{\mathrm{top}}
\newcommand\ld{\lambda}
\newcommand\up{\upsilon}
\newcommand\ve{\varepsilon}
\newcommand\vp{\varpi}
\newcommand\Y{X_*(T)}
\newcommand\ft{\footnotesize}
\newcommand\bb{\bold b}
\newcommand\mn{\lfloor \frac{m}{n} \rfloor}
\newcommand\la{\langle}
\newcommand\ra{\rangle}
\theoremstyle{definition}
\newtheorem{claim}{Claim}
\theoremstyle{definition}
\newtheorem{theo}{Theorem}[section]
\newtheorem{prop}[theo]{Proposition}
\newtheorem{defi}[theo]{Definition}
\newtheorem{lemm}[theo]{Lemma}
\newtheorem{coro}[theo]{Corollary}
\newtheorem{exam}[theo]{Example}
\newtheorem{rema}[theo]{Remark}
\newtheorem{thm}{Theorem}[section]
\begin{document}
\title{Semi-Modules and Crystal Bases via Affine Deligne-Lusztig Varieties}
\author{Ryosuke Shimada}
\date{}
\maketitle



\begin{abstract}
There are two combinatorial ways of parameterizing the $J_b(F)$-orbits of the irreducible components of affine Deligne-Lusztig varieties for $\GL_n$ and superbasic $b$. 
One way is to use the extended semi-modules introduced by Viehmann.
The other way is to use the crystal bases introduced by Kashiwara and Lusztig.
In this paper, we give an explicit correspondence between them using the crystal structure.
\end{abstract}

\section{Introduction}
\label{introduction}
Let $F$ be a non-archimedean local field with finite field $\F_q$ of prime characteristic $p$, and let $L$ be the completion of the maximal unramified extension of $F$.
Let $\sigma$ denote the Frobenius automorphism of $L/F$.
Further, we write $\cO,\ \fp$ for the valuation ring and the maximal ideal of $L$.
Finally, we denote by $\vp$ a uniformizer of $F$ (and $L$) and by $v_L$ the valuation of $L$ such that $v_L(\vp)=1$.

Let $G$ be a split connected reductive group over $F$ and let $T$ be a split maximal torus of it.
Let $B$ be a Borel subgroup of $G$ containing $T$. 
For a cocharacter $\mu\in X_*(T)$, let $\vp^{\mu}$ be the image of $\vp\in \mathbb G_m(F)$ under the homomorphism $\mu\colon\mathbb G_m\rightarrow T$.

Set $K=G(\cO)$.
We fix a dominant cocharacter $\mu\in X_*(T)_+$ and $b\in G(L)$.
Then the affine Deligne-Lusztig variety $X_{\mu}(b)$ is the locally closed reduced $\aFq$-subscheme of the affine Grassmannian $\cG r$ defined as
$$X_{\mu}(b)(\aFq)=\{xK\in G(L)/K\mid x^{-1}b\sigma(x)\in K\vp^{\mu}K\}\subset \cG r(\aFq).$$
Left multiplication by $g^{-1}\in G(L)$ induces an isomorphism between $X_\mu(b)$ and $X_\mu(g^{-1}b\sigma(g))$.
Thus the isomorphism class of the affine Deligne-Lusztig variety only depends on the $\sigma$-conjugacy class of $b$.

The affine Deligne-Lusztig variety $X_{\mu}(b)$ carries a natural action (by left multiplication) by the group
$$J_b(F)=\{g\in G(L)\mid g^{-1}b\sigma(g)=b\}.$$

For $\mu_\bl=(\mu_1, \ldots, \mu_d)\in \Y^d_+$ and $b_\bl=(1,\ldots, 1, b)\in G^d(L)$ with $b\in G(L)$, we can similarly define $X_{\mu_\bl}(b_\bl)\subset \cG r^d$ and $J_{b_\bl}(F)$ using $\sigma_\bl$ given by
$$(g_1,g_2,\ldots, g_d)\mapsto (g_2,\ldots, g_d, \sigma(g_1)).$$

The geometric properties of affine Deligne-Lusztig varieties have been studied by many people.
One of the most interesting results is an explicit description of the set $J_b(F)\backslash \Irr X_\mu(b)$ of $J_b(F)$-orbits of $\Irr X_\mu(b)$, where $\Irr X_\mu(b)$ denotes the set of irreducible components of $X_\mu(b)$.

\begin{rema}
It is known that $X_\mu(b)$ is equi-dimensional.
In the equal characteristic case, this was proved in \cite{HV3}.
In the mixed characteristic case, this was proved in \cite{Takaya}.
\end{rema}

Let $\hG$ be the Langlands dual of $G$ defined over $\overline \Q_l$ with $l\neq p$.
Denote $V_\mu$ the irreducible $\hG$-module of highest weight $\mu$.
The crystal basis $\B_\mu$ of $V_\mu$ was first constructed by Kashiwara and Lusztig (cf.\ \cite{HK}).
In $\Y$, there is a distinguished element $\ld_b$ determined by $b$.
It is the ``best integral approximation'' of the Newton vector of $b$, but we omit the precise definition.
For this, see \cite[\S2.1]{HV2} (in fact, \cite[Example 2.3]{HV2} is enough for our purpose).
In \cite{Nie}, Nie proved that there exists a natural bijection
$$J_b(F)\backslash \Irr X_\mu(b)\cong \B_\mu(\ld_b).$$
In particular, $|J_b(F)\backslash \Irr X_\mu(b)|=\dim V_\mu(\ld_b)$.
The proof is reduced to the case where $G=\GL_n$ and $b$ is superbasic.
So this case is particularly important.
This theorem is first conjectured by Miaofen Chen and Xinwen Zhu.
Before the work by Nie, Xiao-Zhu \cite{XZ} proved the conjecture under the assumption that $b$ is unramified, and Hamacher-Viehmann \cite{HV2} proved the minuscule case.
The last equality is also proved by Rong Zhou and Yihang Zhu in \cite{ZZ}.
See \cite[\S 1.2]{ZZ} for the history.

On the other hand, in the case where $G=\GL_n$ and $b$ is superbasic, Viehmann \cite{Viehmann} defined a stratification of $X_\mu(b)$ using extended semi-modules.
For $\mu\in \Y_+$ and superbasic $b\in \GL_n(L)$, let $\A^\tp_{\mu,b}$ be the set of top extended semi-modules, that is, the extended semi-modules whose corresponding strata are top-dimensional.
Then $J_b(F)\backslash \Irr X_\mu(b)$ is also parametrized by $\A^\tp_{\mu,b}$.

In \cite[Remark 0.10]{Nie}, Nie pointed out that it would be interesting to give an explicit correspondence between $\A^\tp_{\mu,b}$ and $\B_\mu(\ld_b)$. 
The purpose of this paper is to study this question (for the split case).
More precisely, we will propose a way of constructing (the unique lifts of) all the top extended semi-modules from crystal elements, which was unclear before this work.

From now and until the end of this paper, we set $G=\GL_n$.
Let $T$ be the torus of diagonal matrices, and we choose the subgroup of upper triangular matrices $B$ as Borel subgroup. 
Let us define the Iwahori subgroup $I\subset K$ as the inverse image of the {\it lower} triangular matrices under the projection $K\rightarrow G(\aFq),\  \vp\mapsto 0$.

We assume $b$ to be superbasic, i.e., its Newton vector $\nu_b\in \Y_\Q\cong \Q^n$ is of the form $\nu_b=(\frac{m}{n},\ldots, \frac{m}{n})$ with $(m,n)=1$.
Moreover, we choose $b$ to be $\eta^m$, where 
$\eta={\begin{pmatrix}
0 & \vp \\
1_{n-1} & 0\\
\end{pmatrix}}$.
We often regard $\eta$ (and hence $b$) as an element of the Iwahori-Weyl group $\tW$.
For superbasic $b$, the condition that $X_\mu(b)$ (resp.\ $X_{\mu_\bl}(b_\bl)$) is non-empty is equivalent to $v_L(\det(\vp^\mu))=v_L(\det(b))$ (resp.\ $v_L(\det(\vp^{\mu_1+\cdots+\mu_d}))=v_L(\det(b))$) (cf.\ \cite[Theorem 3.1]{He}).
In this paper, we assume this.

Since $X_\mu(b)=X_{\mu+c}(\vp^c b)$ for any central cocharacter $c$,
we may assume that $\mu(1)\geq\cdots\geq \mu(n-1)\geq \mu(n)=0$, where $\mu(i)$ denotes the $i$-th entry of $\mu$.

To state the main result, we introduce $\cA_{\mu_\bl, b_\bl}^{\tp}$ and $\A_{\mu_\bl, b_\bl}^{\tp}$.
See \S\ref{Irreducible Components} for details.
For minuscule $\mu_\bl\in \Y^d_+$ and $b_\bl=(1,\ldots,1,b)\in G^d(L)$,
we define
$$\cA_{\mu_\bl, b_\bl}^{\tp}\coloneqq \{\ld_\bl\in \Y^d\mid \dim X_{\mu_\bl}^{\ld_\bl}(b_\bl)=\dim X_{\mu_\bl}(b_\bl)\}.$$
Here $X_{\mu_\bl}^{\ld_\bl}(b_\bl)$ denotes $X_{\mu_\bl}(b_\bl)\cap It^{\ld_\bl} K/K$.
For $\ld_\bl, \ld'_\bl\in \cA_{\mu_\bl, b_\bl}^{\tp}$, we write $\ld_\bl\sim \ld'_\bl$ if $\ld_\bl=\eta^k \ld'_\bl=(\eta^k \ld'_1,\ldots, \eta^k \ld'_d)$ for some $k\in \Z$.
Let $\A_{\mu_\bl, b_\bl}^{\tp}$ denote the set of equivalence classes with respect to $\sim$, and let $[\ld_\bl]\in \A_{\mu_\bl, b_\bl}^{\tp}$ denote the equivalence class represented by $\ld_\bl\in \cA_{\mu_\bl, b_\bl}^{\tp}$.
Then $J_{b_\bl}(F)\backslash \Irr X_{\mu_\bl}(b_\bl)$ is parametrized by $\A_{\mu_\bl, b_\bl}^{\tp}$.

For $\mu\in\Y_+$, let $\mu_\bl\in \Y_+^d$ be a certain minuscule dominant cocharacter with $\mu=\mu_1+\mu_2+\cdots+\mu_n$, see \S\ref{construction}.
Note that $\{\mu_1,\mu_2,\ldots,\mu_n\}$ itself is uniquely determined by $\mu$.
Let $\pr\colon \cG r^d\rightarrow \cG r$ be the projection to the first factor.
This induces $\pr\colon \A_{\mu_\bl, b_\bl}^{\tp}\rightarrow \sqcup_{\mu'\le \mu}\A_{\mu', b}^\tp$.
Then our main result is the following:
\begin{thm}[Theorem \ref{constructionthm}]
For $\bb\in \B_\mu(\ld_b)$, using the crystal structure of $\B_\mu$, we can construct $\ld_\bl^1(\bb), \ld_\bl^2(\bb),\ldots, \ld_\bl^n(\bb)\in \cA_{\mu_\bl, b_\bl}^{\tp}$ such that $\ld^i_\bl(\bb)=\eta^{i-1}\ld_\bl^1(\bb)$ and $[\ld_\bl^1(\bb)]$ is the unique equivalence class in $\A_{\mu_\bl, b_\bl}^{\tp}$ whose image $\pr([\ld_\bl^1(\bb)])$ belongs to $\A_{\mu, b}^{\tp}$ and maps to $\bb$ under the bijection $J_b(F)\backslash \Irr X_\mu(b)\cong \B_\mu(\ld_b)$ by Nie.
\end{thm}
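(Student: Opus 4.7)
The plan is to build $\ld_\bl^1(\bb)$ via a tensor decomposition of the crystal. Since each $\mu_i$ in the decomposition $\mu=\mu_1+\cdots+\mu_n$ is minuscule, the irreducible crystal $\B_\mu$ embeds into the tensor product $\B_{\mu_1}\otimes\cdots\otimes\B_{\mu_n}$ as the connected component containing the tensor of highest-weight vectors. Given $\bb\in\B_\mu(\ld_b)$, I write its unique image in the embedding as $\bb_1\otimes\cdots\otimes\bb_n$, and since each factor crystal $\B_{\mu_i}$ is minuscule its elements are canonically parametrized by the Weyl orbit $W\mu_i\subset\Y$. This assigns a cocharacter $\ld_i^1\in W\mu_i$ to each $\bb_i$, and I set $\ld_\bl^1(\bb)=(\ld_1^1,\ldots,\ld_n^1)$ and $\ld_\bl^i(\bb)=\eta^{i-1}\ld_\bl^1(\bb)$ componentwise, so the relation in the theorem statement is automatic.

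The main point to verify is that $\ld_\bl^1(\bb)\in\cA_{\mu_\bl,b_\bl}^{\tp}$. I would compute $\dim X_{\mu_\bl}^{\ld_\bl}(b_\bl)$ for each candidate $\ld_\bl\in\prod_i W\mu_i$ by combining the semi-infinite orbit intersection with the stratification of $X_{\mu_\bl}(b_\bl)$, reducing it to combinatorics on the sequence $(\ld_1,\ldots,\ld_n)$ read through the action of $\eta$. The strategy is to show that the resulting top-dimensionality condition coincides termwise with the Kashiwara signature rule that characterizes which tensors $\bb_1\otimes\cdots\otimes\bb_n$ lie in the image of the crystal embedding $\B_\mu\hookrightarrow\B_{\mu_1}\otimes\cdots\otimes\B_{\mu_n}$. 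Consequently, the construction forces $\ld_\bl^1(\bb)$ into the top stratum; conversely, any $\ld_\bl\in\cA_{\mu_\bl,b_\bl}^{\tp}$ will be recognized as such a tensor, giving a bijection $\B_\mu(\ld_b)\to\A^{\tp}_{\mu_\bl,b_\bl}|_{\pr\in\A^{\tp}_{\mu,b}}$.

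For the compatibility with Nie's bijection, I would project to the first factor: the description of $\A_{\mu,b}^{\tp}$ via Viehmann's top extended semi-modules pins down a unique lift in the equivalence class $[\ld_\bl^1(\bb)]$ whose first component lies in $\A_{\mu,b}^{\tp}$. Combined with the cardinality coincidence $|J_b(F)\backslash\Irr X_\mu(b)|=|\B_\mu(\ld_b)|=|\A^{\tp}_{\mu,b}|$ that follows from Nie and Viehmann, any injective map $\B_\mu(\ld_b)\to\A^{\tp}_{\mu,b}$ respecting the crystal weight must agree with Nie's bijection, so it suffices to track the weight grading, which is immediate from the tensor construction. The principal obstacle I foresee is the step linking the Kashiwara signature combinatorics in the tensor product crystal to the lattice-chain description of top-dimensional strata of $X_{\mu_\bl}(b_\bl)$: these combinatorial models live on opposite sides of the semi-module/crystal correspondence, and matching them factor-by-factor under the action of each Kashiwara operator $\tf_i$ (in particular handling the way $\eta$-twists interact with the sign sequences) is where the technical heart of the argument will lie.
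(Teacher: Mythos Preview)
Your construction misidentifies what the cocharacter tuple $\ld_\bl$ is. You set $\ld_i^1=\wt(\bb_i)$, i.e.\ $\ld_\bl^1(\bb)=\FE(\bb)$, but this is \emph{not} an element of $\cA_{\mu_\bl,b_\bl}^{\tp}$. The bijection in Theorem~\ref{minuscule bijection} sends $\ld_\bl$ to $\ld_\bl^\flat=\up_{\ld_\bl}^{-1}(\ld_\bl^\natural)$, where $\ld_l^\natural=\ld_{l+1}-\ld_l$ for $l<d$ and $\ld_d^\natural=b\ld_1-\ld_d$. So $\FE(\bb)$ is the value of $\ld_\bl^\flat$, not of $\ld_\bl$ itself. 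Concretely, the components of $\ld_\bl$ are arbitrary elements of $\Y$ and are almost never in the Weyl orbit of $\mu_l$; what is conjugate to $\mu_l$ is the successive difference $\ld_l^\natural$. In the paper's worked example ($n=5$, $\mu=(4,3,3,2,0)$) one finds $\xi_1=(0,1,2,0,2)$, which is visibly not a permutation of $\mu_1=(1,0,0,0,0)$.

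The actual problem the paper solves is therefore the inversion of $\ld_\bl\mapsto\ld_\bl^\flat$. This is not formal: the defining equations $\up_{\ld_l}^{-1}(\ld_{l+1}-\ld_l)=\wt(\bb_l)$ are coupled because $\up_{\ld_l}$ depends on the unknown $\ld_l$, and the last equation wraps around via $b$. The paper's construction extracts from $\bb$ a Coxeter element $w(\bb)=w_1^{-1}\cdots w_d^{-1}$ by tracking which tensor factor each Kashiwara operator $\te_i$ acts on when passing from $\bb$ to $\bb^{\op}$, shows that $\up_{\ld_1}$ must lie in the set $\Upsilon(\bb)=\{\up:\up^{-1}\tau^m\up=w(\bb)\}$ of size $n$, and then writes down an explicit first component $\xi_1(\bb,\up)=\up\,\xi(\up^{-1}\bb^-)$ built from the $\ve_i$-statistics of a Weyl conjugate of $\bb$. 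The heart of the proof (\S\ref{inequalities}--\S\ref{end}) is verifying that this ansatz really satisfies $\up_{\xi_l}=\up w_1^{-1}\cdots w_{l-1}^{-1}$ and closes up under $b$; none of this is visible from the bare tensor decomposition you propose. Your final bijectivity argument (``any injective map respecting the weight must agree with Nie's'') is also insufficient: weight spaces $\B_\mu(\ld_b)$ typically have dimension $>1$, so weight-compatibility alone does not pin down the bijection.
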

A crystal is a finite set with a weight map $\wt$ and Kashiwara operators $\te_\alpha$ and $\tf_\alpha$ satisfying certain conditions, see \S\ref{crystal}. 
For more details on the construction of $\ld_\bl^1(\bb), \ld_\bl^2(\bb),\ldots, \ld_\bl^n(\bb)$, see \S\ref{construction}.
The merit of constructing $[\ld_\bl^1(\bb)]$ instead of constructing $\pr([\ld_\bl^1(\bb)])$ directly is that the $J_b(F)$-orbit in $X_\mu(b)$ corresponding $[\ld_\bl^1(\bb)]$ is much more explicit.
It is just $J_b(F)\pr(\overline{X_{\mu_\bl}^{\ld^1_\bl(\bb)}(b_\bl)})$.

In \cite{Shimada4}, the author used Theorem A to find top (non-)cyclic extended semi-modules (see \cite[Definition 3.4]{Viehmann} for the notion of cyclic extended semi-modules).
This is one of the technical cores to study the semi-module stratification and to prove the main theorem there.
In a future work, we shall also explore the possibility of applying Theorem A to determine the type of the stabilizer in $J_b(F)$ of each irreducible component for general $G$ and $b$ (see \cite[Theorem 0.11]{Nie}).

The paper is organized as follows.
In \S\ref{notation}, we fix notation and give an overview of extended semi-modules.
In \S\ref{crystal}, we recall a notion of crystals and a realization by Young tableaux.
In \S\ref{Semi-Modules and Crystal Bases}, we first recall a known result on a relationship between semi-modules and crystal bases for the minuscule case.
After that, we state a precise way of constructing top extended semi-modules from crystal elements.
One of the keys for this construction is to recover Weyl group elements defined in \cite{Nie} (see Remark \ref{constructionthmrema}).
This will be done by applying some Kashiwara operators on $\bb\in \B_\mu(\ld_b)$.
We also need a certain dominant cocharacter defined by a crystal element, which originally comes from \cite{XZ} (see Remark \ref{constructionthmrema2}).
In \S\ref{proof}, we prove the main theorem in a combinatorial way.

\textbf{Acknowledgments:}
The author would like to thank Eva Viehmann, Ulrich G\"ortz and Yihang Zhu for helpful comments.
The author would also like to thank Sian Nie for sharing the possibility of application to determining the type of stabilizers.
The author is grateful to his advisor Yoichi Mieda for his constant support and encouragement.

This work was supported by the WINGS-FMSP program at the Graduate School of Mathematical Science, the University of Tokyo. 
This work was also supported by JSPS KAKENHI Grant number JP21J22427.

\section{Notations}
\label{notation}
Keep the notations and assumptions in \S\ref{introduction}.
\subsection{Basic Notations}
Let $\Phi=\Phi(G,T)$ denote the set of roots of $T$ in $G$.
We denote by $\Phi_+$ (resp.\ $\Phi_-$) the set of positive (resp.\ negative) roots distinguished by $B$.
Let $\chi_{ij}$ be the character $T\rightarrow \Gm$ defined by $\mathrm{diag}(t_1,t_2,\ldots, t_n)\mapsto t_i{t_j}^{-1}$.
Using this notation, we have $\Phi=\{\chi_{i,j}\mid i\neq j\}$, $\Phi_+=\{\chi_{i,j}\mid i< j\}$ and $\Phi_-=\{\chi_{i,j}\mid i> j\}$.
Let $\Delta=\{\chi_{i,i+1}\mid 1\le i <n\}$ be the set of simple roots and $\Delta^\vee$ be the corresponding set of simple coroots.
We let $$\Y_+=\{\mu\in \Y| \la \alpha, \mu \ra\geq0\ \text{for all}\ \alpha\in \Phi_+\}$$ denote the set of dominant cocharacters.
Through the isomorphism $\Y\cong \Z^n$, ${\Y}_+$ can be identified with the set $\{(m_1,\cdots, m_n)\in \Z^n|m_1\geq \cdots \geq m_n\}$.
For $\ld, \mu\in \Y$, we write $\ld\le \mu$ if $\mu-\ld$ is a linear combination of simple coroots with non-negative coefficients.

Let $W_0$ denote the finite Weyl group of $G$, i.e., the symmetric group of degree $n$.
For $1\le i\le n-1$, let $s_i$ be the adjacent transposition changing $i$ to $i+1$.
Then $(W_0, \{s_1,\ldots, s_{n-1}\})$ is a Coxeter system, and we denote by $\ell$ the associated length function.
Let $\le$ denote the Bruhat order on $(W_0, S)$.
For $w\in W_0$, we denote by $\supp(w)$ the set of integers $1\le i\le n-1$ such that the simple reflection $s_i$ appears in some/any reduced expression of $w$.
We say $w\in W_0$ is a Coxeter element (resp.\ partial Coxeter element) if it is a product of simple reflections, and each simple reflection appears exactly once (resp.\ at most once).
Let $\tW$ be the Iwahori-Weyl group of $G$.
Then $\tW$ is isomorphic to
$$\Y\rtimes W_0=\{\vp^\ld w\mid \ld\in \Y, w\in W_0\},$$
and acts on $\Y$.
The action of $\vp^\ld w\in \tW$ is given by $v\mapsto w(v)+\ld$.

\subsection{Extended Semi-Modules}
\label{Extended Semi-Modules}
Here we briefly summarize the definition of extended semi-modules in a combinatorial way,
although we do not need it in this paper.
See \cite{Viehmann} for the precise definition.
Recall that $b\in G(L)$ is a superbasic element with slope $\frac{m}{n}$.
\begin{defi}
A {\it semi-module} for $m, n$ is a subset $A\subset \Z$ that is bounded below and satisfies $m+A\subset A$ and $n+A\subset A$.
Set $\bar{A}=A\setminus (n+A)$.
The semi-module $A$ is called normalized if $\sum_{a\in \bar{A}}a=\frac{n(n-1)}{2}$.
An {\it extended semi-module} $(A,\varphi)$ for $\mu$ is a normalized semi-module $A$ for $m,n$ together with a function $\varphi\colon \Z\rightarrow \N\cup\{-\infty\}$ satisfying certain conditions.
\end{defi}

Set $X_\mu(b)^0=\{xK\in X_\mu(b)\mid v_L(\det(x))=0\}$.
For an extended semi-module $(A,\varphi)$, we can define a locally closed subset $S_{A,\varphi}\subset X_\mu(b)^0$.
They define a decomposition of $X_\mu(b)^0$ into finitely many disjoint locally closed subschemes.
Moreover, $S_{A,\varphi}\subset X_\mu(b)^0$ is irreducible.
So $J_b(F)\backslash \Irr X_\mu(b)$ is parametrized by $\A_{\mu,b}^\tp\coloneqq \{(A,\varphi)\mid \dim S_{A,\varphi}=\dim X_\mu(b)\}$.
In \cite{Viehmann}, extended semi-modules were used to prove the dimension formula (for $X_\mu(b)\neq \emptyset$) 
$$\dim X_\mu(b)=\la \rho, \mu-\nu_b\ra-\frac{1}{2}\de(b).$$
Here $\rho$ denotes half the sum of positive roots, $\nu_b$ denotes the Newton vector of $b$, and $\de(b)$ denotes the defect of $b$.

Let us also make a few remarks on $\cA_{\mu_\bl, b_\bl}^{\tp}$ introduced in \S1.
Set $R_{\mu_\bl, b_\bl}(\ld_\bl)=\{(l, \chi_{i,j})\mid 1\le l\le d,  \la \chi_{i,j}, \ld_l^\natural\ra=-1,(\ld_l)_{\chi_{i,j}}\geq 1\}.$
See \S\ref{Irreducible Components} for the notation.
By \cite[Proposition 2.9]{Nie}, $X_{\mu_\bl}^{\ld_\bl}(b_\bl)\neq \emptyset$ if and only if $\ld_\bl^\natural$ is conjugate to $\mu_\bl$.
Moreover, in this case,
$$\dim X_{\mu_\bl}^{\ld_\bl}(b_\bl)=|R_{\mu_\bl, b_\bl}(\ld_\bl)|.$$
Combining this with the dimension formula for $X_\mu(b)$, we have
$$\cA_{\mu_\bl, b_\bl}^{\tp}=\{\ld_\bl\in \Y^d\mid \ld_\bl^\natural\in W_0 \mu_\bl,|R_{\mu_\bl, b_\bl}(\ld_\bl)|=\la \rho, \mu-\nu_b\ra-\frac{1}{2}\de(b)\}.$$
Thus we can actually define $\cA_{\mu_\bl, b_\bl}^{\tp}$ without using affine Deligne-Lusztig varieties.

If $d=1$, $\cA_{\mu_\bl, b_\bl}^{\tp}$ can be canonically identified with $\cA_{\mu, b}^{\tp}$.
This follows from the fact that if $\mu$ is minuscule, then all extended semi-modules for $\mu$ are cyclic (\cite[COROLLARY 3.7]{Viehmann}).

\section{Crystal Bases}
\label{crystal}
Keep the notations and assumptions above.
\subsection{Crystals and Young Tableaux}
In this subsection, we first recall the definition of $\hG$-crystals from \cite[Definition 3.3.1]{XZ}.
After that, we give a realization of crystals by Young tableaux.
This allows us to treat them in a combinatorial way.
\begin{defi}
\label{crystaldefi}
A (normal) {\it $\hG$-crystal} is a finite set $\B$, equipped with a weight map $\wt\colon \B\rightarrow \Y$, and operators $\te_\alpha, \tf_\alpha\colon \B\rightarrow \B\cup \{0\}$ for each $\alpha\in \Delta$, such that
\begin{enumerate}[(i)]
\item for every $\bb\in \B$, either $\te_\alpha\bb=0$ or $\wt(\te_\alpha \bb)=\wt(\bb)+\alpha^\vee$, and either $\tf_\alpha\bb=0$ or $\wt(\tf_\alpha \bb)=\wt(\bb)-\alpha^\vee$,
\item for all $\bb, \bb'\in \B$ one has $\bb'=\te_\alpha \bb$ if and only if $\bb=\tf_\alpha \bb'$, and
\item if $\ve_\alpha, \phi_\alpha\colon \B\rightarrow \Z,\ \alpha\in \Delta$ are the maps defined by
\begin{align*}
\ve_\alpha(\bb)=\max\{k\mid \te_\alpha^k\bb\neq 0\}\ \  \text{and}\ \ \phi_\alpha(\bb)=\max\{k\mid \tf_\alpha^k\bb\neq 0\},
\end{align*}
then we require $\phi_\alpha(\bb)-\ve_\alpha(\bb)=\la \alpha, \wt(\bb)\ra$.
\end{enumerate}
For $\ld\in \Y$, we denote by $\B(\ld)$ the set of elements with weight $\ld$ for $\hG$, called the {\it weight space} with weight $\ld$ for $\hG$.
Let $\B_1$ and $\B_2$ be the two $\hG$-crystals.
A morphism $\B_1\rightarrow \B_2$ is a map of underlying sets compatible with $\wt,\te_\alpha$ and $\tf_\alpha$.
\end{defi}
In the sequel, we write $\te_i$ and $\tf_i$ (resp.\ $\ve_i$ and $\phi_i$) instead of $\te_{\chi_{i,i+1}}$ and $\tf_{\chi_{i,i+1}}$ (resp.\ $\ve_{\chi_{i,i+1}}$ and $\phi_{\chi_{i,i+1}}$) for simplicity.

\begin{exam}
\label{Box}
Set $\B_\Box=\{\ft \young(1), \young(2),\ldots, \young(n)\}$.
We define $\te_i,\tf_i$ and $\wt$ by
\begin{align*}
 \te_i{\ft\young(k)}=
\begin{cases}
{\ft\young(i)} & (k=i+1)\\
0 & (k\neq i+1),
\end{cases}
 \tf_i{\ft\young(k)}=
\begin{cases}
{\ft\fbox{$i+1$}} & (k=i)\\
0 & (k\neq i),
\end{cases}
\wt({\ft\young(k)})={v_k},
\end{align*}
where ${v_k}=(0,\ldots,0,1,0,\ldots, 0)$ with the nonzero component at position $k$.
It is easy to check that this defines a $\hG$-crystal structure on $\B_\Box$.
\end{exam} 

\begin{exam}
\label{highest}
Let $\B_\mu$ be the crystal basis of the irreducible $\hG$-module of highest weight $\mu\in \Y_+$.
Then $\B_\mu$ is a crystal.
We call $\B_\mu$ a {\it highest weight crystal} of highest weight $\mu$ (cf.\ \cite[Definition 3.3.1 (3)]{XZ}).
There exists a unique element $\bb_\mu\in \B_\mu$ satisfying $\te_\alpha\bb_\mu=0$ for all $\alpha$, $\wt(\bb_\mu)=\mu$, and $\B_\mu$ is generated from $\bb_\mu$ by operators $\tf_\alpha$.
In particular, for $\omega_1=(1,0,\ldots, 0)$, we can easily check that $\B_{\omega_1}$ is a crystal isomorphic to $\B_\Box$ and $\bb_{\omega_1}$ corresponds to $\young(1)$.
\end{exam}

Following \cite[Definition 3.3.1(5)]{XZ}, we define the tensor product of $\hG$-crystals.
\begin{defi}
\label{tensor}
Let $\B_1$ and $\B_2$ be two $\hG$-crystals.
The tensor product $\B_1\otimes \B_2$ is the $\hG$-crystal with underlying set $\B_1\times \B_2$, and $\wt(\bb_1\otimes \bb_2)=\wt(\bb_1)+\wt(\bb_2)$.
The operators $\te_\alpha$ and $\tf_\alpha$ are defined by
\begin{align*}
\te_\alpha(\bb_1\otimes \bb_2)&=
\begin{cases}
\te_\alpha \bb_1\otimes \bb_2 & (\phi_\alpha(\bb_1)\geq \ve_\alpha(\bb_2))\\
\bb_1\otimes \te_\alpha \bb_2 & (\phi_\alpha(\bb_1)< \ve_\alpha(\bb_2)),
\end{cases}\\
\tf_\alpha(\bb_1\otimes \bb_2)&=
\begin{cases}
\tf_\alpha \bb_1\otimes \bb_2 & (\phi_\alpha(\bb_1)> \ve_\alpha(\bb_2))\\
\bb_1\otimes \tf_\alpha \bb_2 & (\phi_\alpha(\bb_1)\le \ve_\alpha(\bb_2)).
\end{cases}
\end{align*}
We have
\begin{align*}
\ve_\alpha(\bb_1\otimes \bb_2)&=\max\{\ve_\alpha(\bb_1), \ve_\alpha(\bb_2)-\la \alpha, \wt(\bb_1)\ra\}, \\
\phi_\alpha(\bb_1\otimes \bb_2)&=\max\{\phi_\alpha(\bb_2), \phi_\alpha(\bb_1)+\la \alpha, \wt(\bb_1)\ra\}.
\end{align*}
\end{defi}

Taking tensor product of $\hG$-crystal is associative, making the category of $\hG$-crystals a monoidal category.
Using this fact, we will endow a $\hG$-crystal structure on the set of Young tableaux.
A detailed discussion can be found in \cite{HK}, chapter 7.

\begin{defi}
A {\it Young diagram} is a collection of boxes arranged in left-justified rows with a weakly decreasing number of boxes in each row.
A {\it tableau} is a Young diagram filled with numbers, one for each box.
A {\it semistandard tableau} is a tableau obtained from a Young diagram by filling the boxes with the numbers $1,2,\ldots, n$ subject to the conditions
\begin{enumerate}[(i)]
\item the entries in each row are weakly increasing from left to right,
\item the entries in each column are strictly increasing from top to bottom.
\end{enumerate}
$$\yng(4,3,1)\hspace{2cm}\young(1124,233,4)$$
\end{defi}
We denote by $\cB(Y)$ the set of all semistandard tableaux of shape $Y$.

\begin{defi}
\label{Far-Eastern}
Let $Y$ be a Young diagram and let $N$ be the number of boxes in $Y$.
The {\it Far-Eastern reading} is an embedding $\cB(Y)\rightarrow \B_\Box^{\otimes N}$ defined by decomposing a semistandard tableau $\bb\in \cB(Y)$ into a tensor product of its boxes by proceeding down columns from top to bottom and from right to left.

$$\young(1124,233,4)=\young(4)\otimes \young(2)\otimes \young(3)\otimes \young(1)\otimes \young(3)\otimes \young(1)\otimes \young(2)\otimes \young(4)$$
\end{defi}

\begin{theo}
\label{Young crystal}
Let $Y$ be a Young diagram.
Then the Far-Eastern reading $\cB(Y)\rightarrow \B_\Box^{\otimes N}$ is stable under $\te_i$ and $\tf_i$ for any $i$.
Hence the Far-Eastern reading defines a $\hG$-crystal structure on $\cB(Y)$.
\end{theo}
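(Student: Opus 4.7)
The plan is to invoke the classical signature (bracketing) rule on $\B_\Box^{\otimes N}$ and then verify combinatorially that $\cB(Y)$ is stable under it.

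First I would derive the signature rule from Definition~\ref{tensor} by induction on $N$. For a word $w = b_1 \otimes \cdots \otimes b_N$, attach a $+$ to each $b_k$ equal to the box of entry $i$, a $-$ to each $b_k$ equal to the box of entry $i+1$, and leave the others blank; iteratively cancel every $+$ immediately followed by a $-$ (ignoring blanks). The induction shows that $\tf_i w$ modifies the leftmost surviving $+$ into a $-$ (and is $0$ if none remains), while $\te_i w$ modifies the rightmost surviving $-$ into a $+$.

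Applied to the Far-Eastern reading of $\bb \in \cB(Y)$: since each column is strictly increasing, it contains at most one $i$ and one $i+1$, and whenever both appear they occupy consecutive rows with $i$ directly above $i+1$, so their signs form an adjacent $+-$ pair and cancel inside the column. Consequently only the ``pure'' columns (containing exactly one of $i, i+1$) contribute signs that can possibly survive the global bracketing. Now assume $\tf_i\bb \neq 0$, and let $(r,c)$ be the $i$-box chosen by the signature rule: column $c$ is then pure, and its $+$ is the leftmost globally surviving $+$. Stability reduces to checking $(r+1, c) \neq i+1$ and $(r, c+1) \neq i$ whenever these boxes exist. The first is immediate from the purity of column $c$. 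For the second, suppose $(r, c+1) = i$. If $(r+1, c+1)$ exists and equals $i+1$, then the column-length monotonicity of a Young diagram forces $(r+1, c)$ to exist, and the strict-column / weak-row inequalities then force $(r+1, c) = i+1$, contradicting the purity of column $c$. Otherwise column $c+1$ is itself pure of type $i$ and contributes a $+$ at $(r, c+1)$ that, in the reading, sits immediately before $(r, c)$'s $+$ (only blanks in between); by the LIFO nature of the bracketing, any $-$ cancelling $(r, c+1)$'s $+$ must first cancel $(r, c)$'s $+$, so survival of the latter forces survival of the former, making $(r, c+1)$'s $+$ strictly to the left of $(r, c)$'s among the surviving $+$'s---contradicting the choice of $(r, c)$. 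A symmetric argument treats $\te_i$, and once stability holds the crystal axioms of Definition~\ref{crystaldefi} restrict from $\B_\Box^{\otimes N}$ to $\cB(Y)$.

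The hard part will be the row-condition case analysis: the LIFO cancellation bookkeeping must be reconciled with the Young diagram's column-length constraint, and the two subcases (according to whether $(r+1, c+1)$ contains $i+1$) exploit the semistandard condition in complementary ways.
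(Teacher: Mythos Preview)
Your argument is correct. The paper's own proof consists of a single citation to \cite[Theorem 7.3.6]{HK}, so your proposal is not so much a different route as an explicit unpacking of the cited result: you supply the standard signature-rule verification that the reference contains. The derivation of the bracketing rule from Definition~\ref{tensor} is exactly what the paper later records as Theorem~\ref{ef} (there attributed to \cite{KN}), and your column-by-column stability check is the classical argument. The only point worth tightening in a final write-up is the phrase ``any $-$ cancelling $(r,c+1)$'s $+$ must first cancel $(r,c)$'s $+$'': what you mean, and what the parenthesis-matching picture actually gives, is that if $(r,c)$'s $+$ survives then so does $(r,c+1)$'s $+$, since the two $+$'s are adjacent (up to blanks) and the inner one must be matched before the outer one can be. With that clarified, the argument is complete, and it has the advantage over a bare citation of making the paper more self-contained.
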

\begin{proof}
This follows from \cite[Theorem 7.3.6]{HK}.
\end{proof}

For a semistandard tableau $\bb\in \cB(Y)$, let $k_i$ denote the number of $i$'s appearing in $\bb$.
Then the weight map $\wt$ on this $\hG$-crystal structure is given by $\wt(\bb)=(k_1,\ldots, k_n)$.
Finally, the following theorem gives a realization of $\B_\mu$.
\begin{theo} 
\label{Young theorem}
Let $\mu=(\mu(1),\ldots,\mu(n))\in \Y_+\setminus\{0\}$ with $\mu(n)\geq0$.
Let $Y$ be the Young diagram having $\mu(i)$ boxes in the $i$th row.
Then $\B_\mu$ is isomorphic to $\cB(Y)$.
\end{theo}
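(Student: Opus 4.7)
The plan is to identify a highest weight element in $\cB(Y)$ of weight $\mu$, show that the connected component it generates exhausts $\cB(Y)$ via a dimension count, and invoke Kashiwara's uniqueness theorem for highest weight normal crystals.

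First I would single out the tableau $T_\mu\in\cB(Y)$ whose $i$-th row is filled entirely with the entry $i$ for every $i$. A direct count gives $\wt(T_\mu)=\mu$. To verify $\te_i T_\mu=0$ for every simple root, I would compute the Far-Eastern reading word of $T_\mu$ and apply the signature form of the tensor product rule from Definition \ref{tensor}. Each column of $T_\mu$ has entries $1,2,\ldots,k$ read top-to-bottom, so with respect to $\te_i$ such a column contributes nothing (if $k<i$), a single $+$ (if $k=i$), or the pattern $+-$ (if $k\geq i+1$) to the signature. Since every within-column $+-$ pair cancels and the surviving signs are all $+$, no $-$ remains, which means $\te_i T_\mu=0$.

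Let $C\subset\cB(Y)$ be the connected component of $T_\mu$. The tensor product rule ensures that every connected component of $\B_\Box^{\otimes N}$ is a normal highest weight crystal, and by Theorem \ref{Young crystal} the same therefore holds for every component of the subcrystal $\cB(Y)$. In particular $C$ is a connected normal crystal whose unique highest weight element $T_\mu$ has weight $\mu$, so Kashiwara's uniqueness theorem for highest weight normal crystals (see for instance \cite[Theorem 4.3.2]{HK}) gives $C\cong \B_\mu$, and in particular $|C|=\dim V_\mu$.

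Finally, I would rule out any additional components by a cardinality comparison: by the combinatorial definition of Schur polynomials and Weyl's dimension formula,
$$|\cB(Y)|=s_\mu(1,\ldots,1)=\dim V_\mu=|C|,$$
so $C=\cB(Y)$ and $\cB(Y)\cong \B_\mu$. The main obstacle in this plan is Kashiwara's uniqueness theorem itself, which is genuinely deep (relying on the grand loop argument); granting it as a black box, the rest of the proof reduces to the signature computation at $T_\mu$ and the classical Schur-function cardinality identity, both of which are short.
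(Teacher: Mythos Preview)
The paper's own ``proof'' is nothing more than the sentence \emph{This is \cite[Theorem 7.4.1]{HK}}, so there is no argument here to compare against. Your sketch is a correct and standard route to the result and is in the spirit of how such a statement is established in \cite{HK}: exhibit the canonical tableau $T_\mu$, check via the signature rule that it is a highest weight vector of weight $\mu$, use the fact that $\cB(Y)$, being closed under $\te_i,\tf_i$ by Theorem~\ref{Young crystal}, is a union of connected components of $\B_\Box^{\otimes N}$ and hence each component is a highest weight normal crystal, invoke Kashiwara's uniqueness theorem to identify the component of $T_\mu$ with $\B_\mu$, and finish with the Schur-polynomial cardinality identity $|\cB(Y)|=s_\mu(1,\ldots,1)=\dim V_\mu$.

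One small remark: your signature computation is fine but it is worth saying explicitly why the concatenation of the column words still reduces to a string of $+$'s. In the Far-Eastern reading the columns are read from right to left, so the column heights are weakly increasing along the word; hence the pattern is $+\cdots+\,(+-)\cdots(+-)$ with all the $(+-)$ blocks at the right end, and they self-cancel without disturbing the leading $+$'s. This is implicit in what you wrote, but making the monotonicity of column heights explicit removes any doubt that no stray $-$ survives. Beyond that, the only genuine input you are black-boxing is Kashiwara's uniqueness theorem, exactly as you say; granting it, the argument is complete.
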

\begin{proof}
This is \cite[Theorem 7.4.1]{HK}.
\end{proof}
In the sequel, we identify $\B_\mu$ and $\cB(Y)$ by this isomorphism.
The following result is an explicit description of the actions of $\te_i$ and $\tf_i$ on $\B_\mu$.
\begin{theo}
\label{ef}
The actions of $\te_i$ and $\tf_i$ on $\bb\in \B_\mu$ can be computed by following the steps below:
\begin{enumerate}[(i)]
\item In the Far-Eastern reading $\bb_1\otimes\cdots\otimes \bb_N$ of $\bb$,
we identify \framebox[1.2em]{\rule{0pt}{1.6ex}$i$} (resp.\ \fbox{$i+1$}) by $+$ (resp.\ $-$) and neglect other boxes. 
\item Let $u_i(\bb)=u^1u^2\cdots u^\ell\ (u^j\in \{\pm\})$ be the sequence obtained by (i).
If there is ``$+-$'' in $u(\bb)$, then we neglect such a pair.
We continue this procedure as far as we can.
\item Let $u_i(\bb)_{\red}=-\cdots-+\cdots+$ be the sequence obtained by (ii).
Then $\te_i$ changes the rightmost $-$ in $u(\bb)_{\red}$ to $+$, and $\tf_i$ changes the leftmost $+$ in $u(\bb)_{\red}$ to $-$.
If there is no such $-$ (resp.\ $+$), then $\te_i\bb=0$ (resp.\ $\tf_i \bb=0$).
\end{enumerate}
Moreover, $\ve_i(\bb)$ (resp.\ $\phi_i(\bb)$) is equal to the number of $-$ (resp.\ $+$) in $u(\bb)_{\red}$.
\end{theo}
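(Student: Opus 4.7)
The plan is induction on the number $N$ of boxes in $Y$, combining Theorem \ref{Young crystal} -- which identifies $\cB(Y)$ with a subcrystal of $\B_\Box^{\otimes N}$ via the Far-Eastern reading -- with the tensor product rule of Definition \ref{tensor}. It thus suffices to verify the signature rule for arbitrary elements of $\B_\Box^{\otimes N}$, since the crystal operators on $\cB(Y)$ are obtained by restriction from those on $\B_\Box^{\otimes N}$.

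The base case $N=1$ is immediate: a single box with entry $k$ contributes the signature $+$ if $k=i$, $-$ if $k=i+1$, and nothing otherwise, and in each case the signature rule directly reproduces the values of $\te_i, \tf_i, \ve_i, \phi_i$ recorded in Example \ref{Box}. For the inductive step, write $\bb = \bb' \otimes \bb_N$ with $\bb' = \bb_1 \otimes \cdots \otimes \bb_{N-1}$; by induction, $u(\bb')_{\red} = -^{a'}+^{b'}$ with $a' = \ve_i(\bb')$ and $b' = \phi_i(\bb')$, and the theorem correctly describes $\te_i \bb'$ and $\tf_i \bb'$.

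A small preparatory remark is that the rewriting $+{-}\to\varepsilon$ is confluent, so $u(\bb)_{\red}$ may be computed by appending $u^N$ (of length at most $1$) to $u(\bb')_{\red}$ and cancelling one $+-$ pair if it appears. This splits into three cases according to the entry of $\bb_N$: if the entry is $i$, then $u(\bb)_{\red} = -^{a'}+^{b'+1}$; if it is $i+1$, then $u(\bb)_{\red} = -^{a'}+^{b'-1}$ when $b' \geq 1$ and $u(\bb)_{\red} = -^{a'+1}$ when $b' = 0$; otherwise $u(\bb)_{\red} = -^{a'}+^{b'}$. In each case one immediately reads off $\ve_i(\bb)$ and $\phi_i(\bb)$ as the counts of $-$ and $+$ in $u(\bb)_{\red}$.

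The pivotal observation, which I would then verify case by case, is that the rightmost $-$ of $u(\bb)_{\red}$ lies in the portion inherited from $u(\bb')_{\red}$ precisely when $\phi_i(\bb') \geq \ve_i(\bb_N)$ -- exactly the dichotomy of the tensor rule of Definition \ref{tensor} for $\te_i$ -- so by the inductive hypothesis one obtains $\te_i \bb = \te_i \bb' \otimes \bb_N$ in the first alternative and $\te_i \bb = \bb' \otimes \te_i \bb_N$ in the second. The symmetric observation for the leftmost $+$ handles $\tf_i$. The proof is thereby reduced to routine bookkeeping; I expect no substantial obstacle beyond treating the boundary situations where a count vanishes (so that both rules consistently output $0$) and the threshold $\phi_i(\bb') = \ve_i(\bb_N)$, where the $\te_i$ and $\tf_i$ conventions are reversed on which factor is acted upon, but the agreement between the signature rule and the tensor rule is preserved.
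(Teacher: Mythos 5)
Your proposal is correct, but it takes a genuinely different route from the paper: the paper does not prove the signature rule at all, it simply cites \cite[Theorem 3.4.2]{KN} for the first statement and observes that the formulas for $\ve_i$ and $\phi_i$ follow immediately. What you do instead is reprove that cited result from scratch by induction on $N$, reducing via Theorem \ref{Young crystal} to the tensor power $\B_\Box^{\otimes N}$ and checking that the signature rule is compatible with the tensor product rule of Definition \ref{tensor} when one peels off the last factor $\bb_N$. Your case analysis is sound: the three cases (entry $i$, entry $i+1$, other) do yield $u_i(\bb)_{\red}$ from $u_i(\bb')_{\red}$ exactly as you describe, and the dichotomy ``rightmost $-$ lies in the inherited portion iff $\phi_i(\bb')\geq\ve_i(\bb_N)$'' (with strict inequality for the leftmost $+$ and $\tf_i$) matches the asymmetric conventions $\geq$ versus $>$ in Definition \ref{tensor}, including the threshold and degenerate cases. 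The one point worth making explicit rather than calling ``a small preparatory remark'' is the confluence of the cancellation $+-\to\varepsilon$ at the level of \emph{positions}, not just of the abstract normal form $-^{a}+^{b}$: you need to know that the set of surviving positions is well defined and that the new letter contributed by $\bb_N$ can only cancel against the rightmost surviving $+$ of $u_i(\bb')_{\red}$, since the inductive hypothesis identifies $\te_i\bb'$ and $\tf_i\bb'$ with changes at specific positions. This is the standard bracket-matching argument and is easy to supply. In exchange for being longer than the paper's one-line citation, your argument is self-contained and makes the mechanism behind the signature rule visible, which is arguably more in the spirit of the combinatorial computations carried out later in \S\ref{computation}.
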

\begin{proof}
The first statement is \cite[Theorem 3.4.2]{KN}.
The second statement follows immediately from this.
\end{proof}

We will see an example of this computation in \S\ref{example}.
For $j_1\le j_2$, let $u^{j_1}u^{j_1+1}\cdots u^{j_2}$ be a part of $u_i(\bb)$ above.
Then similarly as the notation above, we denote by $(u^{j_1}u^{j_1+1}\cdots u^{j_2})_{\red}$ the sequence obtained by neglecting ``$+-$'' as far as we can.
Then Theorem \ref{ef} tells us that $\ve_i(\bb)=\max\{\text{the number of $-$ in $(u^1u^2\cdots u^j)_{\red}$}\mid 0\le j\le \ell\}$ (resp.\ $\phi_i(\bb)=\max\{\text{the number of $+$ in $(u^ju^{j+1}\cdots u^\ell)_{\red}$}\mid 1\le j\le \ell+1\}$).
If $\ve_i(\bb)>0$ (resp.\ $\phi_i(\bb)>0$), then $\te_i$ (resp.\ $\tf_i$) changes $u^j=-$ (resp.\ $u^j=+$) with $j$ minimal (resp.\ maximal) such that the number of $-$ (resp.\ $+$) in $(u^1\cdots u^j)_{\red}$ (resp.\ $(u^j\cdots u^\ell)_{\red}$) is $\ve_i(\bb)$ (resp.\ $\phi_i(\bb)$).

Finally, we recall the Weyl group action on crystals.
Let $\B$ be a $\hG$-crystal.
For any $1\le i\le n-1$ and $\bb\in \B$, we set
\begin{align*}
s_i\bb=
\begin{cases}
\tf_i^{\la \chi_{i, i+1}, \wt(\bb)\ra}\bb & \text{if $\la \chi_{i, i+1}, \wt(\bb)\ra \geq 0$}\\
\te_i^{-\la \chi_{i, i+1}, \wt(\bb)\ra}\bb & \text{if $\la \chi_{i, i+1}, \wt(\bb)\ra \le 0$}.
\end{cases}
\end{align*}
Then we have the obvious relation
$$\wt(s_i\bb)=s_i(\wt(\bb)).$$
By \cite[Theorem 7.2.2]{Kashiwara}, this extends to the action of the Weyl group $W_0$ on $\B$, which is compatible with the action on $\Y$.

\begin{lemm}
\label{Weyl action}
Let $w, w'\in W_0$ and $\bb\in \B$.
If $w(\wt(\bb))=w'(\wt(\bb))$, then $w\bb=w'\bb$.
\end{lemm}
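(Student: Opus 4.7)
The plan is to first reduce the claim to the case $w' = e$: setting $u := (w')^{-1}w$, the hypothesis becomes $u(\wt(\bb)) = \wt(\bb)$, and since $W_0$ acts on $\B$ by Kashiwara's theorem (cited just before the lemma), the conclusion $w\bb = w'\bb$ is equivalent to $u\bb = \bb$. So it suffices to show that if $u \in W_0$ stabilizes $\wt(\bb)$, then $u\bb = \bb$. The crucial base fact, immediate from the definition of the Weyl group action on $\B$, is that whenever $\la \chi_{i,i+1}, \wt(\bb')\ra = 0$ we have $s_i\bb' = \tf_i^0 \bb' = \bb'$; in other words, $s_i$ fixes any crystal element whose weight it fixes.

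Next I would reduce to the case of a dominant weight. Choose $w_0 \in W_0$ with $\ld := w_0(\wt(\bb))$ dominant and set $v := w_0 u w_0^{-1}$, which stabilizes $\ld$. The standard fact that the stabilizer of a dominant weight in $W_0$ is the standard parabolic subgroup $W_I$ with $I := \{i \mid \la \chi_{i,i+1}, \ld\ra = 0\}$ lets me write $v = s_{i_1} s_{i_2} \cdots s_{i_k}$ with each $i_j \in I$. Then I apply $v$ to $w_0 \bb$ (which has weight $\ld$) from right to left: at every stage the weight remains $\ld$ because each $s_{i_j}$ with $i_j \in I$ fixes $\ld$, so by the base observation the next simple reflection also fixes the intermediate element. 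A short induction on $k$ gives $v(w_0\bb) = w_0\bb$, which unwinds to $w_0 u \bb = w_0 \bb$, and acting by $w_0^{-1}$ yields $u\bb = \bb$.

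The main obstacle I anticipate is the subtle fact that for a non-dominant weight, its stabilizer in $W_0 = S_n$ is only a Young subgroup and need not be generated by the ambient simple reflections; consequently $u$ itself need not admit a decomposition into simple reflections that individually fix $\wt(\bb)$, so one cannot argue directly by induction on a reduced expression of $u$. Conjugation by $w_0$ is precisely what circumvents this obstruction: once the weight is dominant its stabilizer is standard parabolic, and the inductive unwinding of $v$ acting on $w_0\bb$ then becomes routine.
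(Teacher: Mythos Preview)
Your proof is correct. Both your argument and the paper's reduce first to showing that $u$ stabilizing $\wt(\bb)$ forces $u\bb=\bb$, and both bottom out in the base observation that a simple reflection $s_i$ with $\la\chi_{i,i+1},\wt(\bb')\ra=0$ fixes $\bb'$. The paper reaches this base case by decomposing $u$ into disjoint cycles and conjugating each cycle separately to a standard form; you instead conjugate once by $w_0$ to make the weight dominant and exploit that the stabilizer of a dominant weight is the standard parabolic $W_I$. Your route is marginally more streamlined (a single conjugation rather than cycle-by-cycle), and you are explicit about why one cannot simply induct on a reduced word of $u$ itself, but the two arguments are essentially the same.
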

\begin{proof}
It is enough to show that if $w(\wt(\bb))=\wt(\bb)$, then $w\bb=\bb$.
By decomposing $w$ into disjoint cycles and considering the conjugation, we can reduce the general case to the case where $w=s_i$.
Then the assertion follows immediately from the definition of the Weyl group action on crystals.
\end{proof}
Let $\bb\in \B(\ld)$.
If $\ld'$ is a conjugate of $\ld$, i.e., there exists $w\in W_0$ such that $\ld'=w\ld$, then we call $w\bb$ the conjugate of $\bb$ with weight $\ld'$.
By Lemma \ref{Weyl action}, this does not depend on the choice of $w$.

\subsection{The Minuscule Case}
\label{Crystal:The Minuscule Case}
If $\mu\in \Y_+$ is minuscule, then $\wt\colon\B_\mu\rightarrow \Y$ gives an identification between $\B_\mu$ and the set of cocharacters which are conjugate to $\mu$.
Suppose $\mu_\bl=(\mu_1,\ldots,\mu_d)\in \Y_+^d$ is minuscule.
We can also identify $\B_{\mu_\bl}^{\hG^d}:=\B_{\mu_1}\times\cdots\times \B_{\mu_d}$ with the set of cocharacters in $\Y^d$ which are conjugate to $\mu_\bl$.
Under this identification, set $$\B_{\mu_\bl}^{\hG^d}(\ld)=\{(\mu_1',\ldots ,\mu_d')\in\B_{\mu_\bl}^{\hG^d}\mid \mu_1'+\cdots+\mu_d'=\ld\}$$
for any $\ld\in\Y$.

We write $\B_{\mu_\bl}^{\hG}$ for the $\hG$-crystal $\B_{\mu_1}\otimes\cdots\otimes \B_{\mu_d}$.
Note that this is equal to $\B_{\mu_\bl}^{\hG^d}$ as a set.
As a $\hG$-crystal, we can decompose $\B_{\mu_\bl}^{\hG}$ into simple objects, i.e.,
$$\B_{\mu_\bl}^{\hG}=\sqcup_\mu \B_\mu^{m_{\mu_\bl}^\mu}.$$
Here $m_{\mu_\bl}^\mu$ denotes the multiplicity with which $\B_\mu$ appears in $\B_{\mu_\bl}^{\hG}$.
Using this decomposition, we define a natural map
$$\otimes\colon \B_{\mu_\bl}^{\hG^d}\rightarrow \B_{\mu_\bl}^{\hG}\rightarrow \sqcup_\mu \B_\mu$$
as a composition of the map given by taking tensor product and the canonical projection to highest weight $\hG$-crystals.

For $1\le k<n$, let $\omega_k$ be the cocharacter of the form $(1,\ldots,1,0,\ldots,0)$ in which $1$ is repeated $k$ times.
Assume that each $\mu_i$ is equal to $\omega_{k_i}$ for some $1\le k_i<n$ and $i<j$ if and only if $k_i\le k_j$.
In the rest of paper, we call such $\mu_\bl$ {\it Far-Eastern}.
Since $\mu_\bl$ is Far-Eastern, then $|\mu_\bl|:=\mu_1+\cdots+\mu_d$ is dominant and its last entry is $0$.
Set $\mu=|\mu_\bl|$ for some Far-Eastern $\mu_\bl$.
Using Theorem \ref{Young theorem}, we obtain an embedding (i.e., an injective morphism of crystals)
$$\FE\colon \B_\mu\rightarrow \B_{\mu_\bl}^{\hG},$$
which decomposes $\bb\in \B_\mu$ into the tensor product of its columns from right to left.
We also call $\FE$ the Far-Eastern reading.
By forgetting the $\hG$-crystal structure, we obtain a map $\B_\mu\rightarrow \B_{\mu_\bl}^{\hG^d}$, which is also denoted by $\FE$.
$$\young(1124,233,4)=\young(4)\otimes \young(2,3)\otimes \young(1,3)\otimes \young(1,2,4)$$

\begin{lemm}
\label{FE}
For any $\bb\in \B_\mu$, $\FE(\bb)$ is the unique element in $\B_{\mu_\bl}^{\hG^d}$ such that $\otimes(\FE(\bb))=\bb$.
\end{lemm}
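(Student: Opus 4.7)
The plan is to establish the lemma in two halves: first show existence, $\otimes(\FE(\bb))=\bb$ for every $\bb\in\B_\mu$, and then show uniqueness, namely that the fiber of $\bb$ under $\otimes$ has cardinality one. Both parts rest on basic properties of highest weight crystals together with the fact, which follows from Theorem \ref{Young crystal} (applied both to the shape of $\mu$ and to each single-column shape $\omega_{k_i}$), that the column-wise Far-Eastern reading $\FE\colon\B_\mu\to\B_{\mu_\bl}^{\hG}$ is a morphism of $\hG$-crystals.

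For the existence part, since $\FE$ is a crystal morphism and $\B_\mu$ is connected, the image $\FE(\B_\mu)$ lies inside a single connected component of $\B_{\mu_\bl}^{\hG}$. The highest weight vector $\bb_\mu$ is sent to $\FE(\bb_\mu)=\bb_{\mu_1}\otimes\cdots\otimes\bb_{\mu_d}$, which has weight $|\mu_\bl|=\mu$ and satisfies $\te_i\FE(\bb_\mu)=\FE(\te_i\bb_\mu)=0$ for every $i$. Hence the ambient connected component is a highest weight crystal of highest weight $\mu$, isomorphic to $\B_\mu$. The canonical projection $\B_{\mu_\bl}^{\hG}\to\sqcup_\mu \B_\mu$ restricts to a crystal isomorphism from this component onto the $\B_\mu$ summand and sends $\FE(\bb_\mu)$ to $\bb_\mu$. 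Because $\B_\mu$ is generated from $\bb_\mu$ by the lowering operators and both $\FE$ and $\otimes$ commute with these operators, we conclude that $\otimes\circ\FE=\id_{\B_\mu}$.

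For uniqueness, it suffices to show that $m_{\mu_\bl}^\mu=1$, i.e. $\B_\mu$ occurs with multiplicity one in $\B_{\mu_\bl}^{\hG}$. Suppose $\bb_1\otimes\cdots\otimes\bb_d\in\B_{\mu_\bl}^{\hG}$ has weight $\mu$. Since $\wt(\bb_i)$ is a weight of $\B_{\mu_i}$, we have $\mu_i-\wt(\bb_i)\ge 0$ as a non-negative integer combination of simple coroots for each $i$. Summing over $i$ we obtain
\[
\sum_{i=1}^d\bigl(\mu_i-\wt(\bb_i)\bigr)=\mu-\mu=0,
\]
and because the simple coroots are linearly independent, each term must vanish individually; thus $\wt(\bb_i)=\mu_i$ and hence $\bb_i=\bb_{\mu_i}$, since $\mu_i$ is the highest weight of $\B_{\mu_i}$. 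So $\bb_{\mu_1}\otimes\cdots\otimes\bb_{\mu_d}$ is the unique element of weight $\mu$, giving a unique highest weight element of weight $\mu$ in $\B_{\mu_\bl}^{\hG}$ and therefore only one connected component isomorphic to $\B_\mu$. Combined with the first part, this forces $\FE(\bb)$ to be the unique element of $\B_{\mu_\bl}^{\hG^d}$ sent to $\bb$ by $\otimes$. I do not expect a substantive obstacle; the only mildly delicate point is the passage from the box-level Far-Eastern reading of \S\ref{crystal} to the column-level $\FE$ used here, which is handled by grouping the columns as subcrystals $\B_{\omega_{k_i}}\subset\B_\Box^{\otimes k_i}$.
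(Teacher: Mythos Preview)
Your proof is correct and follows essentially the same approach as the paper's: both establish that $\FE$ is a crystal morphism, check that $\FE(\bb_\mu)$ is the unique highest weight vector of weight $\mu$ in $\B_{\mu_\bl}^{\hG}$ (giving $m_{\mu_\bl}^\mu=1$), and then propagate $\otimes(\FE(\bb_\mu))=\bb_\mu$ to all of $\B_\mu$ via the lowering operators. The one notable difference is that the paper invokes the Littlewood-Richardson rule to obtain $m_{\mu_\bl}^\mu=1$, whereas you give a self-contained weight argument (summing $\mu_i-\wt(\bb_i)\ge 0$ to zero forces each term to vanish); your route is more elementary and avoids the external citation, while the paper's is shorter but relies on a heavier result.
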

\begin{proof}
Let $\bb_\mu\in \B_\mu$ be the unique element with highest weight $\mu$.
Then the $i$th row of $\bb_\mu$ consists of only $i$.
By the ``Littlewood-Richardson" rule (see \cite[Theorem 7.4.6]{HK}), we can check that $m_{\mu_\bl}^\mu=1$ and $\FE(\bb_\mu)\in \B_{\mu_\bl}^{\hG}$ is the unique maximal vector with weight $\mu$.
In particular, $\otimes(\FE(\bb_\mu))=\bb_\mu$.
Since $\FE$ is a morphism of crystals, we have $\FE(\tf_\alpha \bb)=\tf_\alpha \FE(\bb)$ for any $\alpha\in \Delta, \bb\in \B_\mu$.
Therefore $\otimes(\FE(\bb))=\bb$, and such $\bb$ is unique.
\end{proof}

\section{Semi-Modules and Crystal Bases}
\label{Semi-Modules and Crystal Bases}
Keep the notations and assumptions above.
From now, we set $\tau=s_1s_2\cdots s_{n-1}$.
\subsection{Irreducible Components}
\label{Irreducible Components}
Let $\ld\in \Y$ and $\alpha\in \Phi$.
We set $\ld_\alpha=\la \alpha, \ld\ra$ if $\alpha\in \Phi_-$ and $\ld_\alpha=\la \alpha, \ld\ra-1$ if $\alpha\in \Phi_+$.
Let $U_\ld$ be the subgroup of $G$ generated by $U_\alpha$ such that $\ld_\alpha\geq 0$.
We define $\up_\ld\in W_0$ to be the unique element such that $U_\ld=\up_\ld U\up_\ld^{-1}$.
In particular, $\up_{\ld}^{-1}\ld$ is dominant.
Here $U$ denotes the unipotent radical of $B$.
It is easy to check $\up_{\eta \ld}=\tau \up_{\ld}$.
For $\ld_\bl=(\ld_1,\ldots, \ld_d)\in \Y^d$, set $\up_{\ld_\bl}=(\up_{\ld_1},\ldots, \up_{\ld_d})$.

Let us denote by $\Irr X_{\mu_\bl}(b_\bl)$ the set of irreducible components of $X_{\mu_\bl}(b_\bl)$.
Through the identification $J_b(F)\cong J_{b_\bl}(F)$ given by $g\mapsto (g,\ldots,g)$, this set is equipped with an action of $J_b(F)$.
Set $J_b(F)^0=J_b(F)\cap K=J_b(F)\cap I$.
Then we have $J_b(F)/J_b(F)^0=\{\eta^k J_b(F)^0\mid k\in \Z\}$ (cf.\ \cite[Lemma 3.3]{CV}).

We first consider the case where $\mu_\bl$ is minuscule.
For $\ld_\bl\in \Y^d$, set $\ld_\bl^\dag=b_\bl \sigma_\bl(\ld_\bl)$, $\ld_\bl^\natural=\ld_\bl^\dag-\ld_\bl$ and $\ld_\bl^\flat =\up_{\ld_\bl}^{-1}(\ld_\bl^\natural)$.
It is easy to check $(\eta\ld_\bl)^\flat=\ld_\bl^\flat$.
Let $\ld_b$ denote the cocharacter whose $i$-th entry is $\lfloor \frac{im}{n}\rfloor-\lfloor \frac{(i-1)m}{n}\rfloor$.
\begin{theo}
\label{minuscule bijection}
Assume that $\mu_\bl\in \Y_+^d$ is minuscule.
Then $\ld_\bl\in \cA_{\mu_\bl, b_\bl}^\tp$ if and only if $\ld_\bl^\flat\in \B_{\mu_\bl}^{\hG^d}(\ld_b)$, and $X_{\mu_\bl}^{\ld_\bl}(b_\bl)$ is an affine space for such $\ld_\bl$.
Moreover, the maps $\ld_\bl\mapsto \ld_\bl^\flat$ and $\ld_\bl\mapsto \overline{X_{\mu_\bl}^{\ld_\bl}(b_\bl)}$ induce bijections
$$J_{b}(F)\backslash \Irr X_{\mu_\bl}(b_\bl)\cong \A_{\mu_\bl,b_\bl}^\tp\cong \B_{\mu_\bl}^{\hG^d}(\ld_b).$$
\end{theo}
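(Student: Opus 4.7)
The plan is to deduce all three assertions from the combinatorial description of $\cA_{\mu_\bl,b_\bl}^\tp$ recorded at the end of \S\ref{Extended Semi-Modules}, combined with Nie's minuscule-case affine-space result \cite[Proposition 2.9]{Nie}.

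First I would prove the characterization $\ld_\bl\in \cA_{\mu_\bl,b_\bl}^\tp \iff \ld_\bl^\flat\in \B_{\mu_\bl}^{\hG^d}(\ld_b)$ by splitting each side into a ``type'' condition and a ``numerical'' condition. The membership $\ld_\bl^\flat\in \B_{\mu_\bl}^{\hG^d}$ says each $(\ld_\bl^\flat)_l=\up_{\ld_l}^{-1}(\ld_l^\natural)$ is a $W_0$-conjugate of $\mu_l$, which, since $\up_{\ld_l}\in W_0$, is equivalent to $\ld_\bl^\natural\in W_0\mu_\bl$; this is the first clause in the formula for $\cA_{\mu_\bl,b_\bl}^\tp$. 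What remains is the equivalence between the dimension identity $|R_{\mu_\bl,b_\bl}(\ld_\bl)| = \la\rho,\mu-\nu_b\ra-\tfrac12\de(b)$ and the total-weight identity $\sum_l(\ld_\bl^\flat)_l = \ld_b$. For this I would unwind $R_{\mu_\bl,b_\bl}(\ld_\bl)$ using the coordinatewise description of $\up_{\ld_l}$ (which brings $\ld_l$ to dominant form) and the definition of $(\ld_l)_{\chi_{i,j}}$, and compare against the closed formula $\ld_b(i)=\lfloor im/n\rfloor-\lfloor(i-1)m/n\rfloor$ together with the Newton and defect terms for superbasic $b$.

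Next, for the affine-space claim the case $d=1$ is contained in \cite[Proposition 2.9]{Nie}; the general case should follow by iterating that argument coordinate-by-coordinate, using that in $\sigma_\bl$ only the last factor is twisted by $b$ while the others are the identity, so the projection $\cG r^d\to \cG r^{d-1}$ fibers $X_{\mu_\bl}^{\ld_\bl}(b_\bl)$ over a smaller Iwahori-intersection with affine fibers coming from Iwahori Bruhat cells. Granted affine-space (hence irreducibility) of each top stratum, the closures of the top-dimensional strata exhaust $\Irr X_{\mu_\bl}(b_\bl)$ because the strata cover the top-dimensional open locus disjointly and $X_{\mu_\bl}(b_\bl)$ is equi-dimensional. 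For the $J_b(F)$-quotient, I would use $J_b(F)/J_b(F)^0 = \eta^\Z$ together with the identity $(\eta\ld_\bl)^\flat = \ld_\bl^\flat$ noted in the text: the $\eta$-action on strata is exactly the equivalence $\sim$ in the definition of $\A_{\mu_\bl,b_\bl}^\tp$, and since $\flat$ is $\eta$-invariant it descends to a well-defined map to $\B_{\mu_\bl}^{\hG^d}(\ld_b)$, which is bijective by the characterization above.

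The main technical obstacle is the numerical matching in the first step: passing between a scalar cardinality equality and the vector equality $\sum_l(\ld_\bl^\flat)_l=\ld_b$. The forward direction (prescribed weight forces maximal dimension) is a direct computation once the type is fixed, but the reverse requires one to argue that, among all $\ld_\bl$ with $\ld_\bl^\natural\in W_0\mu_\bl$, only those with $\sum_l(\ld_\bl^\flat)_l = \ld_b$ attain the dimension bound. This should follow from the extremality of $\ld_b$ as the best integral approximation of $\nu_b$, combined with a convexity/optimization argument on the root-by-root counts defining $|R_{\mu_\bl,b_\bl}(\ld_\bl)|$; executing this cleanly is the crux of the proof.
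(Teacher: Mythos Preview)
The paper does not prove this statement; it simply cites \cite[Proposition~2.9 \& Theorem~3.3]{Nie} and records the stabilizer observation $\mathrm{Stab}_{J_b(F)}(X_{\mu_\bl}^{\ld_\bl}(b_\bl))=J_b(F)^0$. So your proposal is not a reproduction of the paper's argument but an attempt to re-derive Nie's minuscule theorem from the dimension formula and the combinatorial description of $\cA_{\mu_\bl,b_\bl}^\tp$.

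Your outline is reasonable in shape, and you correctly identify the one nontrivial step: the equivalence between the scalar dimension identity $|R_{\mu_\bl,b_\bl}(\ld_\bl)|=\la\rho,\mu-\nu_b\ra-\tfrac12\de(b)$ and the vector identity $\sum_l(\ld_\bl^\flat)_l=\ld_b$. But this is exactly the content of \cite[Theorem~3.3]{Nie}, and your suggested route---``extremality of $\ld_b$ together with a convexity/optimization argument on root-by-root counts''---is not a proof. The quantity $|R_{\mu_\bl,b_\bl}(\ld_\bl)|$ depends on $\ld_\bl$ in a way that does not reduce to a function of the sum $\sum_l(\ld_\bl^\flat)_l$ alone, so a convexity argument in that single variable cannot even be formulated; and among weights $\ld$ with $\ld\le\mu$, the set with $|\B_{\mu_\bl}^{\hG^d}(\ld)|\neq 0$ is not singled out by any obvious extremality condition on $\ld_b$. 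Nie's actual argument proceeds differently (via an inductive/bijective analysis of the strata), and you should either cite it or reproduce that mechanism rather than gesture at optimization.

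A smaller point: your affine-space argument for general $d$ by ``iterating coordinate-by-coordinate'' is also already the content of \cite[Proposition~2.9]{Nie}, which is stated for arbitrary $d$; there is no need to redo the induction, and the fibration you describe is not quite the one used there.
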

\begin{proof}
This follows from \cite[Proposition 2.9 \& Theorem 3.3]{Nie}.
Note that we have $\mathrm{Stab}_{J_b(F)} (X_{\mu_\bl}^{\ld_\bl}(b_\bl))=J_b(F)^0$.
\end{proof}

We write $\gamma^{G^d}\colon \Irr X_{\mu_\bl}(b_\bl)\rightarrow \B_{\mu_\bl}^{\hG^d}$ for the map which factors through this bijection.
Set $\mu=|\mu_\bl|$.
By \cite[Corollary 1.6]{Nie}, the projection $\pr\colon\cG r^d\rightarrow \cG r$ to the first factor induces a $J_b(F)$-equivariant map
$$\Irr X_{\mu_\bl}(b_\bl)\rightarrow \sqcup_{\mu'\le\mu}\Irr X_{\mu'}(b),\quad C\mapsto \pr(C),$$
which is also denoted by $\pr$.
The general case can be characterized by the minuscule case using $\pr$ and the tensor product of $\hG$-crystals:

\begin{theo}
\label{general bijection}
There exists a map
$$\gamma^G\colon \Irr X_{\mu}(b)\rightarrow \B_{\mu}(\ld_b)$$
which is characterized by the Cartesian square
\[
  \xymatrix@C=30pt{
    \Irr X_{\mu_\bl}(b_\bl) \ar[r]^-{\gamma^{G^d}} \ar[d]_{\pr} & \B_{\mu_\bl}^{\hG^d} \ar[d]^{\otimes} \\
    \sqcup_{\mu'\le\mu} \Irr X_{\mu'}(b) \ar[r]^-{\gamma^G} & \sqcup_{\mu'\le\mu} \B_{\mu'}^{\hG}, 
  }
\]
where $\mu_\bl$ is a minuscule cocharacter in $\Y_+^d$ such that $\mu=|\mu_\bl|$.
Moreover, $\gamma^G$ factors through a bijection
$$J_b(F)\backslash \Irr X_\mu(b)\cong \B_\mu(\ld_b).$$
\end{theo}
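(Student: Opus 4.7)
The plan is to define $\gamma^G$ as the descent of $\otimes \circ \gamma^{G^d}$ along $\pr$: for $C \in \Irr X_{\mu'}(b)$ with $\mu' \le \mu$, pick any lift $\tilde C \in \Irr X_{\mu_\bl}(b_\bl)$ with $\pr(\tilde C) = C$, and set $\gamma^G(C) := \otimes(\gamma^{G^d}(\tilde C))$. Surjectivity of $\pr$ onto $\sqcup_{\mu'\le\mu}\Irr X_{\mu'}(b)$, which is needed to have enough lifts, should follow from \cite[Corollary 1.6]{Nie}. Granting the construction, the theorem reduces to three things: well-definedness of $\gamma^G$ (equivalently, that the outer square commutes), the Cartesian property, and the final factoring bijection.

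The main step is well-definedness. Writing $\tilde C_i = \overline{X_{\mu_\bl}^{\ld_\bl^{(i)}}(b_\bl)}$, the condition $\pr(\tilde C_1) = \pr(\tilde C_2)$ translates, via the explicit stratum description in \cite[Proposition 2.9]{Nie}, into a combinatorial relation between $\ld_\bl^{(1)}$ and $\ld_\bl^{(2)}$. I would either verify by hand, using Lemma \ref{FE} and the Littlewood--Richardson interpretation of $\otimes$, that this relation forces $\otimes((\ld_\bl^{(1)})^\flat) = \otimes((\ld_\bl^{(2)})^\flat)$ inside $\B_{\mu'}^{\hG}$; or more conceptually, following the strategy of Xiao--Zhu \cite{XZ}, identify $\otimes \circ \gamma^{G^d}$ with the geometric Satake tensor morphism applied to $\pr$, so that the desired factoring becomes a consequence of the associativity of the convolution product in the geometric Satake category.

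With well-definedness in hand, the Cartesian property reduces to checking that for each $C$ the restricted map $\gamma^{G^d} \colon \pr^{-1}(C) \to \otimes^{-1}(\gamma^G(C))$ is bijective, which is a counting argument using Theorem \ref{minuscule bijection} together with the multiplicity decomposition $\B_{\mu_\bl}^{\hG} = \sqcup_{\mu'} \B_{\mu'}^{m_{\mu_\bl}^{\mu'}}$. The final bijection $J_b(F)\backslash \Irr X_\mu(b) \cong \B_\mu(\ld_b)$ then follows by restricting the minuscule bijection of Theorem \ref{minuscule bijection} to the components whose projection lies in the top stratum $X_\mu(b)$ (equivalently, whose crystal image tensors into the $\B_\mu$-summand) and modding out by $J_b(F)$. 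I expect the well-definedness step to be the principal obstacle, since it requires precisely matching the geometric fibers of $\pr$ with the Littlewood--Richardson fibers of $\otimes$; this combinatorial matching is the technical heart of the theorem, and circumventing it via geometric Satake requires separately verifying that the explicit stratum-based description of $\gamma^{G^d}$ is compatible with the Satake tensor structure on the Mirkovi\'c--Vilonen basis underlying $\B_\mu$.
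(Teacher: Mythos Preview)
The paper does not prove this theorem at all: its entire proof reads ``This follows from \cite[Theorem 0.5 \& Theorem 0.7]{Nie}.'' So there is no argument to compare your proposal against; the statement is imported wholesale from Nie's paper, where it is the main result (the Chen--Zhu conjecture in the superbasic case).

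Your sketch is a reasonable outline of what such a proof would have to do, and you correctly identify the crux: showing that $\otimes\circ\gamma^{G^d}$ is constant on fibers of $\pr$. But be aware that this is genuinely the hard part of Nie's paper, not something one can expect to dispatch ``by hand'' from Lemma~\ref{FE} and Littlewood--Richardson. Nie's argument is not a direct combinatorial matching of fibers; it proceeds by first constructing the map $\gamma^G$ independently (via a cycle-class/Satake argument in the spirit of Xiao--Zhu) and then proving compatibility with $\gamma^{G^d}$, rather than defining $\gamma^G$ by descent as you propose. Your alternative suggestion of going through geometric Satake is closer to what actually happens, but the compatibility of the stratum-wise description of $\gamma^{G^d}$ with the Satake tensor structure is itself nontrivial and occupies a substantial portion of \cite{Nie}. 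In short: your plan is not wrong in spirit, but the step you flag as the ``principal obstacle'' is the entire content of the cited theorems, and your proposal does not contain a mechanism to overcome it.
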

\begin{proof}
This follows from \cite[Theorem 0.5 \& Theorem 0.7]{Nie}.
\end{proof}

Let us denote by $\Gamma^{G^d}$ (resp.\ $\Gamma^G$) the bijection $\A_{\mu_\bl,b_\bl}^\tp\rightarrow \B_{\mu_\bl}^{\hG^d}(\ld_b)$ (resp.\ $\A_{\mu,b}^\tp\rightarrow \B_{\mu}(\ld_b)$) induced by $\gamma^{G^d}$ (resp.\ $\gamma^G$).
Then by Theorem \ref{minuscule bijection} and Theorem \ref{general bijection}, we have the Cartesian square
\[
  \xymatrix@C=30pt{
    \A_{\mu_\bl,b_\bl}^\tp \ar[r]^-{\Gamma^{G^d}} \ar[d]_{\pr} & \B_{\mu_\bl}^{\hG^d}(\ld_b) \ar[d]^{\otimes} \\
    \sqcup_{\mu'\le\mu} \A_{\mu',b}^\tp \ar[r]^-{\Gamma^G} & \sqcup_{\mu'\le\mu} \B_{\mu'}^{\hG}(\ld_b), 
  }
\]
where $\mu_\bl$ is a minuscule cocharacter in $\Y_+^d$ such that $\mu=|\mu_\bl|$.

\subsection{Construction}
\label{construction}
Let $\mu\in \Y_+$.
For $1\le k\le \mu(1)$, set
$$
\mu_k=
\begin{cases}
\omega_1 & (1\le k\le \mu(1)-\mu(2)),\\
\omega_2 & (\mu(1)-\mu(2)<k\le \mu(1)-\mu(3)),\\
\hspace{0.2em}\vdots & \\
\omega_{n-2} & (\mu(1)-\mu(n-2)<k\le \mu(1)-\mu(n-1)),\\
\omega_{n-1} & (\mu(1)-\mu(n-1)<k\le \mu(1)).
\end{cases}
$$
Set $d=\mu(1)$.
Obviously $\mu_\bl\in \Y^d_+$ is Far-Eastern (\S\ref{Crystal:The Minuscule Case}) and $\mu=|\mu_\bl|$.

Let $w_{\max}$ denote the maximal length element in $W_0$.
Set $\ld_b^{\op}=w_{\max}\ld_b$.
For any $\bb\in \B_\mu(\ld_b)$, we denote by $\bb^{\op}$ the conjugate of $\bb$ with weight $\ld_b^{\op}$.
Let $1\le m_0<n$ be the residue of $m$ modulo $n$.
Since $\lfloor \frac{im}{n}\rfloor=i\frac{m-m_0}{n}+\lfloor \frac{im_0}{n}\rfloor$, we have $\ld(i)=\mn+\lfloor \frac{im_0}{n}\rfloor-\lfloor \frac{(i-1)m_0}{n}\rfloor$.
So each entry of $\ld_b$ is $\mn$ or $\mn+1$, and $\ld_b(i)=\ld_b(n+1-i)$ for any $2\le i\le n-1$.
For $0\le k\le m_0$, let $1\le i_k\le n$ be the minimal integer such that $\lfloor \frac{i_km_0}{n}\rfloor\geq k$.
In other words, we define $i_0=1<i_1<i_2<\cdots<i_{m_0}=n$ as the integers such that $\ld_b(i_1)=\ld_b(i_2)=\cdots =\ld_b(i_{m_0})=\mn+1$.
Then 
$$\text{$\ld_b^{\op}=w_{\max}'\ld_b$,\quad where $w_{\max}'=(s_{i_{m_0-1}}\cdots s_{n-1})\cdots (s_{i_1}\cdots s_{i_2-1})(s_1\cdots s_{i_1-1})$.}$$
Here $\ld_b(i)=\mn$ (resp.\ $\ld_b(i+1)=\mn$) if and only if $s_{i-1}s_i\le w_{\max}'$ (resp.\ $s_is_{i+1}\le w_{\max}'$).
By Lemma \ref{Weyl action}, it follows that $\bb^{\op}$ can be computed by the action of the Coxeter element $w_{\max}'$.
In this computation, each $s_i$ acts as the action of $\te_i$ because $\mn-(\mn+1)=-1$.
Therefore, if we write $$\FE(\bb)=\bb_1\otimes\cdots \otimes \bb_d,$$
then there exists $(w_1, \ldots, w_{d})\in W_0^d$ such that
$$\FE(\bb^{\op})=w_1\bb_1\otimes \cdots \otimes w_d\bb_d$$
and each simple reflection appears exactly once in some $\supp(w_j)$.
\begin{lemm}
\label{w}
The tuple $(w_1,\ldots,w_d)\in W_0^d$ as above is uniquely determined by $\bb$.
In particular, $w(\bb)\coloneqq w_1^{-1}\cdots w_d^{-1}$ is a Coxeter element uniquely determined by $\bb$.
\end{lemm}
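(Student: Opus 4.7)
The plan is to extract both claims from the deterministic procedure that is implicit in the paragraph immediately preceding the lemma.

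First I would make the existence argument fully algorithmic. Applying the fixed reduced expression $w'_{\max}=(s_{i_{m_0-1}}\cdots s_{n-1})\cdots(s_1\cdots s_{i_1-1})$ to $\bb$ letter by letter from the right, the weight-pairing computation $\mn-(\mn+1)=-1$ already recorded in the text guarantees that every simple reflection in the expression acts on the current intermediate tensor as the Kashiwara operator $\te_i$. The tensor product rule of Definition \ref{tensor}, iterated across the $d$ factors, combined with Theorem \ref{ef}, shows that each $\te_i$ modifies exactly one factor of the current tensor, unambiguously determined by the signature rule. Recording, for every simple reflection $s_i$ in the reduced expression, the unique index $j(i)\in\{1,\dots,d\}$ of the factor it hits produces a partition $\{1,\dots,n-1\}=\supp(w_1)\sqcup\cdots\sqcup\supp(w_d)$, and $w_j$ is then the ordered product of the simple reflections in $\supp(w_j)$ in the order dictated by the reduced expression. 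In particular every $w_j$ is a partial Coxeter element.

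For uniqueness, suppose $(w'_1,\dots,w'_d)$ is another tuple satisfying the stated conditions. Because each $\bb_j$ lies in the minuscule crystal $\B_{\omega_{k_j}}$, whose weight spaces are one-dimensional, Lemma \ref{Weyl action} shows that $w'_j\bb_j=w_j\bb_j$ is equivalent to $w_j^{-1}w'_j$ stabilising $\wt(\bb_j)$, i.e.\ lying in the parabolic generated by those $s_i$ with $\{i,i+1\}$ entirely inside or entirely outside the entry set $T_j$ of $\bb_j$. My plan is then to run an induction on the length of $w'_{\max}$: peel off the rightmost simple reflection $s$, observe that it must lie in the same block of the support partition in both tuples (otherwise the signature rule for the corresponding $\te$ acting on $\FE(\bb)$ would be violated), and apply the inductive hypothesis to the shorter Coxeter element $w'_{\max}s^{-1}$ acting on $\te\bb$.

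The Coxeter conclusion is then a short length count: each $w_j^{-1}$ admits a reduced expression of length $|\supp(w_j)|$, and concatenating these for $j=1,\dots,d$ yields a word of total length $n-1$ in which every simple reflection appears exactly once. Any such word in $W_0=S_n$ is automatically reduced and presents a Coxeter element, and the uniqueness of the tuple transfers immediately to the uniqueness of $w(\bb)$.

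The main obstacle I foresee is the inductive step in the uniqueness argument: one must verify that the signature rule really leaves no flexibility in which factor the freshly peeled $\te$ acts on, phrased without reference to any particular reduced expression. I expect this to reduce to a combinatorial rigidity fact about partial Coxeter actions on columns of a minuscule crystal, exploiting that $\te_i$ can only move an $\fbox{\scriptsize $i+1$}$ to an $\fbox{\scriptsize $i$}$ inside a column that already contains $i+1$ but not $i$.
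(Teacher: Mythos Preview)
Your existence paragraph and the Coxeter length count are fine and agree with the paper's setup. The uniqueness argument, however, takes a different route from the paper and, as you yourself flag, leaves the crucial step open.

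The paper does not use induction at all. Its first move is one you do not make explicit: it shows that in \emph{any} tuple $(w'_1,\dots,w'_d)$ satisfying the two hypotheses, every simple reflection necessarily acts as $\te_i$ rather than $\tf_i$. This follows from the identity $\ld_b^{\op}-\ld_b=\chi_{1,2}^\vee+\cdots+\chi_{n-1,n}^\vee$ together with linear independence of the simple coroots, so each $s_i$ contributes exactly $+\chi_{i,i+1}^\vee$ to the total weight change. With this in hand the supports are pinned down by a one-line count: the partial sum of $\wt(w_j\bb_j)-\wt(\bb_j)$ over positions $1,\dots,i$ is $1$ or $0$ according to whether $s_i\in\supp(w_j)$ or not, so $s_i\in\supp(w_j)\setminus\supp(w'_j)$ would force $\wt(w_j\bb_j)\neq\wt(w'_j\bb_j)$. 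Finally, once $\supp(w_j)=\supp(w'_j)$ is known, the paper reads off the unique reduced expression of $w_j$ on each connected component $\Sigma$ of the support directly from the $0/1$ pattern of $\wt(\bb_j)$ on $\Sigma\cup(\Sigma+1)$; that pattern forces one specific partial Coxeter element.

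Your inductive peeling is a reasonable instinct, but the obstacle you name is genuine and not merely technical: the alternative tuple carries no preferred reduced expression, so the phrase ``the freshly peeled $s$ must lie in the same block for both tuples'' has no meaning for $(w'_1,\dots,w'_d)$ until you have already established $\supp(w_j)=\supp(w'_j)$. Making the induction go through would in effect require proving the paper's support-matching step first, at which point the induction is superfluous. The direct weight comparison is both shorter and precisely what closes your gap; I would replace the inductive outline by the two observations above.
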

\begin{proof}
If $(w'_1,\ldots,w'_d)\in W_0^d$ is another tuple such that
$$\FE(\bb^{\op})=w'_1\bb_1\otimes \cdots \otimes w'_d\bb_d$$
and each simple reflection appears exactly once in some $\supp(w'_j)$, then each $s_i$ appearing in this tuple acts as the action of $\te_i$.
This follows from the fact that
$$\ld_b^{\op}-\ld_b=(1,0,\ldots,0,-1)=\chi_{1,2}^{\vee}+\chi_{2,3}^{\vee}+\cdots+\chi_{n-1,n}^{\vee}$$
and $\chi_{1,2}^{\vee},\chi_{2,3}^{\vee},\ldots,\chi_{n-1,n}^{\vee}$ are linearly independent. 
Assume that $s_i\in \supp(w_j)$.
If $s_i\notin \supp(w'_j)$, then the number of $1$ appearing at position $k\le i$ of $\wt(w'_j\bb_j)$ is different from that of $\wt(w_j\bb_j)$, which is a contradiction.
So $s_i\in \supp(w'_j)$.
Since this is true for any $i$, it follows that $\supp(w_j)=\supp(w'_j)$ for any $j$.

Fix $j$ and let $\Sigma$ be a connected component of $\supp(w_j)=\supp(w'_j)$.
In particular, $\Sigma=\{\min \Sigma,\min \Sigma+1,\ldots, \max \Sigma-1, \max \Sigma\}$.
We define $k_0=\min \Sigma\le k_1< k_2< \cdots < k_l=\max \Sigma$ by
$$
\wt(\bb_j)(k)=
\begin{cases}
1 & (k=k_1+1, k_2+1,\ldots, k_l+1)\\
0 & (k\neq k_1+1, k_2+1,\ldots, k_l+1)
\end{cases}
$$
for $k_0\le k\le k_l+1$.
Since each $s_i$ with $i\in \supp(w_d)$ acts as the action of $\te_i$, we have
$$(s_{k_{l-1}+1}s_{k_{l-1}+2}\cdots s_{k_l})\cdots (s_{k_1+1}s_{k_1+2}\cdots s_{k_2})(s_{k_0}s_{k_0+1}\cdots s_{k_1})\le w_j.$$
By the above argument, the same is true for $w'_j$.
Since both $j$ and $\Sigma$ are arbitrary, it follows that $w_j=w'_j$.
\end{proof}
We call $w(\bb)$ the {\it Coxeter element associated to $\bb$}.
Set $\Upsilon(\bb)=\{\up\in W_0\mid \up^{-1}\tau^m \up=w(\bb)\}$.
Clearly $|\Upsilon(\bb)|=n$.

For any $\bb'\in \B_{\mu}$, set
$$\xi(\bb')=(\ve_1(\bb')+\cdots+\ve_{n-1}(\bb'),\ve_2(\bb')+\cdots+\ve_{n-1}(\bb'),\ldots,\ve_{n-1}(\bb'),0).$$
Let $\ld_b^-$ be the anti-dominant conjugate of $\ld_b$, and let $\bb^-$ be the conjugate of $\bb$ with weight $\ld_b^-$.
For any $\bb\in \B_{\mu}(\ld_b)$ and $\up\in \Upsilon(\bb)$, we define $\xi_{\bl}(\bb, \up)\in \Y^d$ by
$$\xi_{j}(\bb, \up)=\up \xi(\up^{-1}\bb^-)+\sum_{1\le j'< j}\up w_1^{-1}\cdots w_{j'-1}^{-1}\wt(\bb_{j'})\quad (1\le j\le d).$$

\begin{theo}
\label{constructionthm}
We have $\up_{\xi_j(\bb,\up)}=\up w_1^{-1}\cdots w_{j-1}^{-1}$ and $\xi_{\bl}(\bb,\up)\in \cA_{\mu_\bl, b_\bl}^\tp$.
Moreover, if $\up'$ is an element in $\Upsilon(\bb)$ different from $\up$, then $\xi_{\bl}(\bb, \up)\neq \xi_{\bl}(\bb, \up')$ and $\xi_{\bl}(\bb, \up)\sim \xi_{\bl}(\bb, \up')$.
Finally, we have $$(\Gamma^{G^d})^{-1}(\FE(\bb))=[\xi_{\bl}(\bb,\up)].$$
\end{theo}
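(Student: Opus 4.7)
My overall strategy is to compute $\xi_{\bl}(\bb,\up)^\flat$ explicitly and show that it equals $\FE(\bb)$. Once this is established, Theorem \ref{minuscule bijection} immediately yields both the membership $\xi_{\bl}(\bb,\up)\in\cA_{\mu_\bl,b_\bl}^\tp$ and the final identification $(\Gamma^{G^d})^{-1}(\FE(\bb))=[\xi_{\bl}(\bb,\up)]$. The proof then splits into three verifications, corresponding to the three assertions of the theorem.

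For the first assertion $\up_{\xi_j(\bb,\up)}=\up w_1^{-1}\cdots w_{j-1}^{-1}$, I argue by induction on $j$. The base case $j=1$ reduces to the observation that $\xi(\bb')$ is automatically dominant, since its consecutive differences are exactly the non-negative integers $\ve_i(\bb')$. For the inductive step, the recursion $\xi_{j+1}=\xi_j+\up w_1^{-1}\cdots w_{j-1}^{-1}\wt(\bb_j)$ shows that the candidate for $\up_{\xi_{j+1}}$ is $\up_{\xi_j}w_j^{-1}$, and I must verify that $w_j$ sends $\up_{\xi_j}^{-1}\xi_j+\wt(\bb_j)$ to a dominant cocharacter with the correct tie-breaking convention. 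This relies on the explicit description of $w_j$ established in the proof of Lemma \ref{w}: each simple reflection $s_i$ appearing in $w_j$ corresponds to a raising operator $\te_i$ acting on positions where two consecutive entries of the running cocharacter differ by $-1$, which is precisely what restores dominance after the $w_j$-action.

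For the second assertion I then compute $\xi_j^\flat=\up_{\xi_j}^{-1}\xi_j^\natural$ using the first. For $1\le j<d$ this is direct: since $\sigma$ acts trivially on $\Y$ and $(b_\bl)_j=1$, one has $\xi_j^\natural=\xi_{j+1}-\xi_j=\up_{\xi_j}\wt(\bb_j)$, giving $\xi_j^\flat=\wt(\bb_j)$, which matches the $j$-th factor of $\FE(\bb)$. The case $j=d$ is the crux of the proof: writing $\eta^m=\vp^{t_m}\tau^m$ with $t_m=v_1+\tau(v_1)+\cdots+\tau^{m-1}(v_1)$, the identity $\up^{-1}\tau^m\up=w(\bb)=w_1^{-1}\cdots w_d^{-1}$ reduces the linear part of $\up_{\xi_d}^{-1}(\eta^m\xi_1-\xi_d)$ to $w_d^{-1}\xi(\up^{-1}\bb^-)$, plus a translation contribution coming from $t_m$. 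The translation must then cancel with the remaining sum $\sum_{j'<d}w_{d-1}\cdots w_{j'}\wt(\bb_{j'})$ to yield $\wt(\bb_d)$; this cancellation reflects the global identity $\ld_b^{\op}=\sum_j w_j\wt(\bb_j)$ coming from the definition of the $w_j$, and managing the bookkeeping for the translation part of $\eta^m$ is the main computational obstacle.

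For the third assertion on $\sim$-equivalence, since $(m,n)=1$ the element $\tau^m$ is an $n$-cycle whose centralizer in $W_0$ is the cyclic group $\langle\tau^m\rangle$, so $|\Upsilon(\bb)|=n$ and any two members differ by right multiplication by a power of $\tau^m$. Writing $\up'=\up\tau^{ml}$ and substituting into the defining formula, I use the conjugation identity for $\tau^m$ together with the presentation $\eta=\vp^{v_1}\tau$ to exhibit $\xi_{\bl}(\bb,\up')=\eta^k\xi_{\bl}(\bb,\up)$ componentwise for an appropriate $k=k(l)$, which proves the equivalence. Distinctness of the $n$ tuples $\xi_{\bl}(\bb,\up)$ as $\up$ varies follows immediately, because any nontrivial power of $\eta$ changes the valuation of the determinant on each component. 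Combining this with the previous two steps and Theorem \ref{minuscule bijection} yields the final identification $(\Gamma^{G^d})^{-1}(\FE(\bb))=[\xi_{\bl}(\bb,\up)]$.
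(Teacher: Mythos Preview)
Your overall strategy --- compute $\xi_\bl(\bb,\up)^\flat$ and show it equals $\FE(\bb)$, then invoke Theorem~\ref{minuscule bijection} --- is exactly the paper's approach, and your treatment of $\xi_j^\flat$ for $j<d$ is correct. However, there is a genuine gap in your handling of the first assertion, and it appears already in the base case.

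Recall that $\up_\ld$ is \emph{not} merely ``the element making $\ld$ dominant'': the definition via $\ld_\alpha$ encodes a tie-breaking rule. Concretely, $\up_{\xi_1}=\up$ requires that whenever $\la\chi_{i,i+1},\xi(\up^{-1}\bb^-)\ra=0$, i.e.\ $\ve_i(\up^{-1}\bb^-)=0$, one has $\up\chi_{i,i+1}\in\Phi_-$. This implication is far from automatic: it is precisely the $l=0$ case of Lemma~\ref{inequality}, whose proof relies on the entire apparatus of \S\ref{Coxeter tau}--\S\ref{computation} (Coxeter-element combinatorics, the ``allowed cocharacter'' lemma~\ref{allowed}, Corollary~\ref{wbCoxeter}, and the detailed analysis of how $s_{i\pm1}$ interacts with $\ve_i$ in Lemma~\ref{wbve}). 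Your inductive step has the same problem, only worse: verifying the tie-breaking for $\up_{\xi_{l+1}}$ requires Lemmas~\ref{inequality2} and~\ref{inequality3} and the full case analysis of Proposition~\ref{upsilon general}. The sentence ``this relies on the explicit description of $w_j$ in Lemma~\ref{w}'' does not supply any of this; knowing where the simple reflections in $w_j$ sit does not by itself control $\ve_i(\up^{-1}\bb^-)$.

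For $j=d$ you acknowledge the bookkeeping as ``the main computational obstacle'' but do not carry it out; the paper's Proposition~\ref{wbtensor} does this via a tensor-product computation of $w(\bb)(\bb_{\xi(\up^{-1}\bb^-)}\otimes\up^{-1}\bb^-)$, again leaning on Lemma~\ref{suppw} and Corollary~\ref{suppwcoro}. Finally, your direct approach to $\sim$-equivalence would require relating $\xi(\up^{-1}\bb^-)$ to $\xi((\tau^{mk}\up)^{-1}\bb^-)$ explicitly, which is not obvious since these are $\ve_i$-values at genuinely different crystal elements; the paper instead deduces $\xi_\bl(\bb,\up)\sim\xi_\bl(\bb,\up')$ \emph{a posteriori} from the bijection in Theorem~\ref{minuscule bijection} (both have $\flat$-image $\FE(\bb)$), and deduces $\xi_\bl(\bb,\up)\neq\xi_\bl(\bb,\up')$ simply from $\up_{\xi_1(\bb,\up)}=\up\neq\up'=\up_{\xi_1(\bb,\up')}$.
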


Clearly, this construction itself does not depend on the choice of realization of $\B_\mu$.
Note that each entry of $\xi_1(\bb,\up)=\up\xi(\up^{-1}\bb^-)$ is non-negative, and at least one entry is equal to $0$.
So if $\xi_{\bl}(\bb, \up)=\eta^k\xi_{\bl}(\bb, \up')$, then $-n<k<n$.
Then Theorem A follows immediately from Theorem \ref{constructionthm} and this observation.
\begin{rema}
\label{constructionthmrema2}
For $\bb\in \B_\mu(\ld_b)$, $\xi(\bb)$ already appeared in \cite[Lemma 4.4.3]{XZ}.
In \cite[Theorem 4.4.5]{XZ}, $\xi(\bb)$ was used to construct the irreducible component corresponding to $\bb$.
\end{rema}

\begin{rema}
\label{constructionthmrema}
Let $\bb\in \B_\mu(\ld_b)$ and $\up\in \Upsilon(\bb)$.
In \cite[\S3.3]{Nie}, Nie defined $(w'_1,\ldots, w'_d)\in W_0^d$ from $\ld_\bl\coloneqq \xi_{\bl}(\bb, \up)$ as follows.
Set $a_{j,i}=\up_{\ld_j}(i)+n\ld_j(\up_{\ld_j}(i))$ for $1\le j \le d$.
By the definition of $\up_{\ld_j}$, $a_{j,1}>\cdots>a_{j,n}$ is the arrangement of the integers $i+n\ld_j(i)$ in the decreasing order.
Define $(w'_1,\ldots, w'_d)\in W_0^d$ such that
$$a_{j,i}=
\begin{cases}
a_{j+1, w'_j(i)}-n\ld_j^\flat(i)& (1\le j\le d-1)\\
a_{1,w'_d(i)}-n\ld_d^\flat(i)+m& (j=d).
\end{cases}$$
Then we have $(w_1,\ldots, w_d)=(w'_1,\ldots, w'_d)$.
Indeed, by Theorem \ref{constructionthm} and \cite[Lemma 3.7]{Nie}, we have $\up(w_{j-1}\cdots w_1)^{-1}=\up_{\ld_j}=\up (w'_{j-1}\cdots w'_1)^{-1}$ for $1\le j\le d$.
This implies $(w_1,\ldots, w_{d-1})=(w'_1,\ldots, w'_{d-1})$.
Moreover, by \cite[Lemma 3.11]{Nie}, $\ell(w'_d\cdots w'_1)=\sum_{j=1}^d\ell(w'_j)=n-1$ and $w'_d\cdots w'_1$ is a product of distinct simple reflections.
So we have $\supp(w_d)=\supp(w'_d)$.
Let $\Sigma$ be a connected component of $\supp(w_d)=\supp(w'_d)$.
We define $k_0=\min \Sigma\le k_1< k_2< \cdots < k_l=\max \Sigma$ such that
$$(s_{k_{l-1}+1}s_{k_{l-1}+2}\cdots s_{k_l})\cdots (s_{k_1+1}s_{k_1+2}\cdots s_{k_2})(s_{k_0}s_{k_0+1}\cdots s_{k_1})\le w'_d.$$
In particular, $\{\chi_{i,i+1}\in \Delta\mid k_0\le i\le k_l, w'_d\chi_{i,i+1}\in \Phi_-\}=\{\chi_{k_1,k_1+1},\ldots,\chi_{k_l,k_l+1}\}$.
By Theorem \ref{minuscule bijection} and \cite[Lemma 3.8 (1) \& Lemma 3.9]{Nie}, $w'_d\chi_{i,i+1}\in \Phi_-$ if and only if $\wt(\bb_d)(i+1)-\wt(\bb_d)(i)=1$ for $i\in \supp(w_d)$.
Thus we have
$$(\wt(\bb_d)(k_1),\wt(\bb_d)(k_1+1))=\cdots=(\wt(\bb_d)(k_l),\wt(\bb_d)(k_l+1))=(0,1)$$
and $\wt(\bb_d)(k)\geq \wt(\bb_d)(k+1)$ for $k\in \{k_0,k_0+1,\ldots, k_l\}\setminus\{k_1, k_2,\ldots, k_l\}$.
Since each $s_i$ with $i\in \supp(w_d)$ acts as the action of $\te_i$, we have
$$
\wt(\bb_d)(k)=
\begin{cases}
1 & (k=k_1+1, k_2+1,\ldots, k_l+1)\\
0 & (k\neq k_1+1, k_2+1,\ldots, k_l+1)
\end{cases}
$$
for $k_0\le k\le k_l+1$, and hence
$$(s_{k_{l-1}+1}s_{k_{l-1}+2}\cdots s_{k_l})\cdots (s_{k_1+1}s_{k_1+2}\cdots s_{k_2})(s_{k_0}s_{k_0+1}\cdots s_{k_1})\le w_d.$$
Since $\Sigma$ is arbitrary, it follows that $w_d=w'_d$.
\end{rema}

\subsection{An Example}
\label{example}
In this subsection, we give an example.
We consider the case for $n=5, m=12$ and $\mu=(4,3,3,2,0)$.
Then $\mu_1=(1,0,0,0,0),\mu_2=(1,1,1,0,0),\mu_3=(1,1,1,1,0),\mu_4=(1,1,1,1,0),\ld_b=(2,2,3,2,3)$ and $\ld_b^{\op}=(3,2,3,2,2)$.
Set $$\bb=\young(1133,224,345,55)\in \B_{\mu}(\ld_b).$$
Then
\begin{align*}
\FE&(\bb)=\bb_1\otimes \bb_2\otimes \bb_3\otimes \bb_4=\young(3)\otimes \young(3,4,5)\otimes \young(1,2,4,5)\otimes \young(1,2,3,5)
\in \B^{\hG^d}_{\mu_\bl},
\end{align*}
and
\begin{align*}
u_2(\bb)&=\young(3)\otimes \young(3)\otimes \young(2)\otimes \young(2)\otimes \young(3)=--++-,\\
u_2(\bb)_{\red}&=--+,\ve_2(\bb)=2, \phi_2(\bb)=1,\\
u_4(\bb)&=\young(4)\otimes \young(5)\otimes \young(4)\otimes \young(5)\otimes \young(5)=+-+--,\\
u_4(\bb)_{\red}&=-, \ve_4(\bb)=1, \phi_2(\bb)=0.
\end{align*}
So by Theorem \ref{ef}, we have
\begin{align*}
\te_2 \bb =\young(1123,224,345,55), \tf_2 \bb =\young(1133,234,345,55), \te_4 \bb =\young(1133,224,345,45), \tf_4\bb=0.
\end{align*}
In a similar way, we compute
\begin{align*}
\te_2 \bb &=\young(1123,224,345,55), \te_2\te_4 \bb =\young(1123,224,345,45), \te_1\te_2 \bb =\young(1113,224,345,55),\\
\te_3\te_2\te_4 \bb &=\young(1123,224,335,45), \te_1\te_2\te_4 \bb =\young(1113,224,345,45), \te_3\te_4\te_1\te_2 \bb =\young(1113,224,335,45).
\end{align*}
By Theorem \ref{minuscule bijection}, we want to find $\ld_\bl$ satisfying
\begin{align*}
&\hspace{0.6cm}[\ld_\bl]=(\Gamma^{G^d})^{-1}(\FE(\bb))\\
&\Leftrightarrow\ld^\flat_{\bl}=\FE(\bb)\in \B^{\hG^d}_{\mu_\bl}(\ld_b)\\
&\Leftrightarrow\up_{\ld_1}^{-1}(\ld_2-\ld_1)=\wt(\bb_1)=(0,0,1,0,0),\\
&\hspace{0.66cm}\up_{\ld_2}^{-1}(\ld_3-\ld_2)=\wt(\bb_2)=(0,0,1,1,1),\\
&\hspace{0.66cm}\up_{\ld_3}^{-1}(\ld_4-\ld_3)=\wt(\bb_3)=(1,1,0,1,1),\\
&\hspace{0.66cm}\up_{\ld_4}^{-1}(b\ld_1-\ld_4)=\wt(\bb_4)=(1,1,1,0,1).
\end{align*}
In the sequel, we check that for $\up\in \Upsilon(\bb)$, $\ld_\bl=\xi_\bl(\bb,\up)$ satisfies these equations.
Since
\begin{align*}
\bb^{\op}=\te_3\te_4\te_1\te_2\bb=\young(1113,224,335,45)\in \B_{\mu}(\ld_b^{\op}),
\end{align*}
we have
\begin{align*}
\FE(\bb^{\op})=\young(3)\otimes s_1s_2\young(3,4,5)\otimes s_3\young(1,2,4,5)\otimes s_4\young(1,2,3,5)
\in \B^{\hG^d}_{\mu_\bl},
\end{align*}
and
\begin{align*}
w_1=1, w_2=s_1s_2, w_3=s_3,w_4=s_4, w(\bb)=w_1^{-1}w_2^{-1}w_3^{-1}w_4^{-1}=s_2s_1s_3s_4.
\end{align*}
So
\begin{align*}
\Upsilon(\bb)&=\{\up\in W_0\mid \up^{-1}\tau^{12}\up=s_2s_1s_3s_4\}\\
&=\{\up\in W_0\mid (1\ 3\ 5\ 2\ 4)=(\up(1)\ \up(3)\ \up(4)\ \up(5)\ \up(2))\}\\
&=\{(1\ 3\ 5\ 4\ 2),(2\ 4\ 5), (1\ 5)(2\ 3),(1\ 2\ 5\ 3\ 4),(1\ 4\ 3)\}.
\end{align*}
Set $\up_1=(1\ 3\ 5\ 4\ 2),\up_2=(2\ 4\ 5), \up_3=(1\ 5)(2\ 3),\up_4=(1\ 2\ 5\ 3\ 4),\up_5=(1\ 4\ 3)$.
Then
\begin{align*}
\up_1^{-1}\ld_b^-&=(2,2,3,2,3),  \up_2^{-1}\ld_b^-=(2,3,2,3,2),\up_3^{-1}\ld_b^-=(3,2,2,3,2),\\
\up_4^{-1}\ld_b^-&=(2,3,3,2,2),\up_5^{-1}\ld_b^-=(3,2,2,2,3).
\end{align*}
The corresponding conjugates of $\bb$ are
\begin{align*}
\bb &=\young(1133,224,345,55), \hspace{0.2cm}\te_2\te_4 \bb =\young(1123,224,345,45),\hspace{0.3cm} \te_1\te_2\te_4 \bb =\young(1113,224,345,45),\\
\te_3\te_2\te_4 \bb &=\young(1123,224,335,45),\hspace{0.2cm} \te_1\te_2\bb =\young(1113,224,345,55),
\end{align*}
respectively.
From this, we compute
\begin{align*}
\xi(\up_1^{-1}\bb^-)&=(3,3,1,1,0), \xi(\up_2^{-1}\bb^-)=(3,2,1,0,0),\xi(\up_3^{-1}\bb^-)=(2,2,1,0,0),\\
\xi(\up_4^{-1}\bb^-)&=(3,2,1,1,0),\xi(\up_5^{-1}\bb^-)=(3,3,2,1,0),
\end{align*}
and
\begin{align*}
\up_1\xi(\up_1^{-1}\bb^-)&=(3,1,3,0,1), \up_2\xi(\up_2^{-1}\bb^-)=(3,0,1,2,0),\up_3\xi(\up_3^{-1}\bb^-)=(0,1,2,0,2),\\
\up_4\xi(\up_4^{-1}\bb^-)&=(1,3,0,1,2),\up_5\xi(\up_5^{-1}\bb^-)=(2,3,1,3,0).
\end{align*}
Note that
\begin{align*}
\up_2\xi(\up_2^{-1}\bb^-)&=\eta(\up_3\xi(\up_3^{-1}\bb^-)),\up_4\xi(\up_4^{-1}\bb^-)=\eta(\up_2\xi(\up_2^{-1}\bb^-)),\\
\up_1\xi(\up_1^{-1}\bb^-)&=\eta(\up_4\xi(\up_4^{-1}\bb^-)),
\up_5\xi(\up_5^{-1}\bb^-)=\eta(\up_1\xi(\up_1^{-1}\bb^-)).
\end{align*}
We first consider the case for $\up_3$.
Set $\xi_\bl=\xi_\bl(\bb,\up_3)$.
Then
\begin{align*}
\xi_1&=(0,1,2,0,2),\\
\xi_2&=\xi_1+\up_3\wt(\bb_1)=(0,2,2,0,2),\\
\xi_3&=\xi_2+\up_3\wt(\bb_2)=(1,3,2,1,2),\\
\xi_4&=\xi_3+\up_3 s_2s_1\wt(\bb_3)=(2,4,2,2,3).
\end{align*}
We can check that
\begin{align*}
\up_{\xi_1}=\up_3,\up_{\xi_2}=\up_3=\up_3 w_1^{-1},\up_{\xi_3}=\up_3 s_2s_1=\up_3 w_1^{-1}w_2^{-1},\up_{\xi_4}&=\up_3 s_2s_1s_3=\up_3 w_1^{-1}w_2^{-1}w_3^{-1},
\end{align*}
and
\begin{align*}
b\xi_1-\xi_4 &=\tau^{12}\xi_1+(3,3,2,2,2)-\xi_4\\
&=(0,2,0,1,2)+(3,3,2,2,2)-(2,4,2,2,3)\\
&=(1,1,0,1,1)=\up_{\xi_4}\wt (\bb_4).
\end{align*}
Thus $\xi_\bl^\flat=\FE(\bb)$.
The same holds for other $\up\in \Upsilon(\bb)$ because $\up_{\eta \ld}=\tau \up_{\ld}$.

In the above example, there exists a partial Coxeter element $w_\up$ such that $\up^{-1}\ld_b^-=w_\up\ld_b$ for any $\up\in \Upsilon(\bb)$.
In fact the same is true in general, see Lemma \ref{allowed}.
Here we illustrate this for $n=5$ and $m_0=2$.
In this case there are $8$ Coxeter elements:
\begin{align*}
s_1s_2s_3s_4=(1\ 2\ 3\ 4\ 5),\quad s_2s_3s_4s_1=(1\ 3\ 4\ 5\ 2),\\
s_3s_4s_1s_2=(1\ 2\ 4\ 5\ 3),\quad s_4s_1s_2s_3=(1\ 2\ 3\ 5\ 4),\\
s_3s_4s_2s_1=(1\ 4\ 5\ 3\ 2),\quad s_4s_2s_1s_3=(1\ 3\ 5\ 4\ 2),\\
s_4s_1s_3s_2=(1\ 2\ 5\ 4\ 3),\quad s_4s_3s_2s_1=(1\ 5\ 4\ 3\ 2).
\end{align*}
Note that $1,2$ and $4,5$ are adjacent respectively in these $n$-cycles (cf.\ Lemma \ref{Coxeter}).
On the other hand, $4,5$ in $\tau^m=(1\ 3\ 5\ 2\ 4)$ are not adjacent.
Since $(\up^{-1}\ld_b^-)(i)=\ld_b^-(\up(i))$ and $\up^{-1}\tau^m\up$ is one of the Coxeter elements listed above, we have $\up^{-1}\ld_b^-\neq (\mn+1,\mn+1,\mn,\mn,\mn), (\mn,\mn,\mn,\mn+1,\mn+1).$
For other conjugate $\ld$ of $\ld_b$, there exists a partial Coxeter element $w$ such that $\ld=w\ld_b$.
Thus our claim is verified in this case.

\section{Proof of Theorem \ref{constructionthm}}
\label{proof}
Keep the notations and assumptions above.
In \S\ref{Coxeter tau}, we collect some properties of Coxeter elements and $\tau^m$.
We need these facts to study $\up^{-1}\ld_b^-$ (and hence $\up^{-1}\bb^-$) in \S\ref{Allowed Cocharacters} because $\up$ is an element defined by measuring the difference between $w(\bb)$ and $\tau^m$.
In \S\ref{computation}, we examine the relationship between $w(\bb)$ and the computation of $\ve_i(\up^{-1}\bb^-)$ from $\ve_i(\bb)$.
In \S\ref{inequalities}, we will establish some inequalities on $\ve_i(\up^{-1}\bb^-)$ from this computation.
These inequalities are the key to the proof of $\xi_l(\bb,\up)_{\up w_1^{-1}w_2^{-1}\cdots w_{l-1}^{-1}\chi_{i,j}}\geq 0$ for all $\chi_{i,j}\in \Phi_+$, see \S\ref{upsilonsubsection} for details.
By definition, this is equivalent to $\up_{\xi_l(\bb,\up)}=\up w_1^{-1}w_2^{-1}\cdots w_{l-1}^{-1}$ (cf.\ \S\ref{Irreducible Components}).
In \S\ref{end}, we finish the proof of Theorem \ref{constructionthm} using tensor structure of crystals.

\subsection{Coxeter Elements and $\tau^m$}
\label{Coxeter tau}
Every Coxeter element in the symmetric group $W_0$ is a cycle of length $n$.
The next lemma says that the numbers $1, 2,\ldots, j$ (resp.\ $n, n-1,\ldots, n-j$) appering in the cycle corresponding to a Coxeter element are ``successive''.
\begin{lemm}
\label{Coxeter}
If $w\in W_0$ is a Coxeter element, then
for any $1\le j\le n$, there exists $1\le i\le j$ (resp.\ $n-j+1\le i\le n$) such that
$\{i, w(i),\ldots, w^{j-1}(i)\}=\{1,2,\ldots, j\}$ (resp.\ $\{i, w(i),\ldots, w^{j-1}(i)\}=\{n,n-1,\ldots, n-j+1\}$).
\end{lemm}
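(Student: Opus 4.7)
The plan is to prove the first claim, about $\{1,\ldots,j\}$, by induction on $n$; the second claim, about $\{n,n-1,\ldots,n-j+1\}$, then follows from the first by taking complements, since the complement of a consecutive arc of length $j$ in an $n$-cycle is itself a consecutive arc of length $n-j$, and $\{j'+1,\ldots,n\}=\{n,\ldots,n-(n-j')+1\}$. The base case $n=2$ is immediate.

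For the inductive step, let $w$ be a Coxeter element of $S_n$. The key observation is that $s_1$ commutes with every $s_k$ for $k\ge 3$, while the unique occurrence of $s_2$ in a reduced expression of $w$ is the only obstruction to sliding $s_1$ past neighboring factors. Consequently, depending on whether $s_1$ appears to the left or to the right of $s_2$ in a chosen reduced expression, one may rewrite $w = s_1 w'$ or $w = w's_1$, where in either case $w'\in\langle s_2,\ldots,s_{n-1}\rangle$ is itself a Coxeter element of the parabolic subgroup, isomorphic to $S_{n-1}$, fixing $1$ and acting on $\{2,\ldots,n\}$ as an $(n-1)$-cycle.

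Next, I will compute how the $n$-cycle of $w$ on $\{1,\ldots,n\}$ is obtained from the $(n-1)$-cycle of $w'$ on $\{2,\ldots,n\}$. Writing that $(n-1)$-cycle as $(d_1,\ldots,d_{n-1})$ with $d_a = 2$, a direct calculation using the values $w(1)$, $w(2)$, and $w(d_i)$ for $i\neq a$ shows that the cycle of $w$ is obtained from that of $w'$ by inserting $1$ immediately before $d_a$ (in the first case) or immediately after $d_a$ (in the second). Now apply the inductive hypothesis to $w'$ on $\{2,\ldots,n\}$: for each $2\le j\le n-1$ the set $\{2,\ldots,j\}$ occupies $j-1$ consecutive positions in the cycle of $w'$, and this arc must contain $d_a = 2$. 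Since the element $1$ is inserted adjacent to $2$ in the cycle of $w$, the set $\{1,2,\ldots,j\}$ then occupies $j$ consecutive positions there. The cases $j=1$ and $j=n$ are trivial, so this concludes the induction and hence the lemma.

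I expect the mildest subtlety to lie in the case analysis on where $2$ sits within the arc $\{2,\ldots,j\}$ of $w'$: when $2$ lies strictly in the interior of the arc, the inserted element $1$ also lies inside the arc rather than extending it at an endpoint, but the resulting $j$ consecutive positions still have underlying set $\{3,\ldots,j\}\cup\{1,2\}=\{1,2,\ldots,j\}$, so the conclusion holds uniformly across the endpoint and interior subcases.
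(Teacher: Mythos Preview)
Your proof is correct and follows essentially the same inductive strategy as the paper: factor off an extremal simple reflection (you use $s_1$, the paper uses $s_{n-1}$) to reduce to a Coxeter element in $S_{n-1}$, then invoke the induction hypothesis. Your explicit description of how $1$ is inserted into the cycle of $w'$ and your complement argument for the second claim are spelled out in more detail than in the paper, but the underlying idea is the same.
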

\begin{proof}
It suffices to prove the case for $\{1,2,\ldots, j\}$.
We argue by induction on $n$.
If $n=2$, the statement is obvious.
Suppose it is true for $n-1$.
Then any Coxeter element in $W_0$ can be written as a product of $w$ and $s_{n-1}$ such that $w$ is a Coxeter element of the symmetric group of degree $n-1$.
The case for $j=n-1, n$ is obvious.
If $1\le j<n-1$, then by the induction hypothesis, it is easy to check that the statement holds for both $ws_{n-1}$ and $s_{n-1}w$.
This completes the proof.
\end{proof}

Let $w$ be a Coxeter element in $W_0$.
Fix a reduced expression $w=s_{j_1}s_{j_2}\cdots s_{j_{n-1}}$.
Then $s_{j_h}s_{j_h-1}\le w$ (resp.\ $s_{j_h}s_{j_h+1}\le w$) if and only if $j_{h'}=j_h-1$ (resp.\ $j_h+1$) for some $h'>h$.
\begin{coro}
\label{Coxeter2}
Let $w$ be a Coxeter element in $W_0$.
Set $s_0=s_n=1$.
\begin{enumerate}[(i)]
\item If $s_{i}s_{i-1}\le w$ and $s_{i}s_{i+1}\le w$, then $$\{i,w(i),\ldots, w^i(i)\}=\{1,2,\ldots,i+1\}$$ and $w^i(i)=i+1$.

\item If $s_{i-1}s_i\le w$ and $s_{i}s_{i+1}\le w$, then $w(i)=i+1$.

\item If $s_{i}s_{i-1}\le w$ and $s_{i+1}s_i\le w$, then $w(i+1)=i$.

\item If $s_{i-1}s_i\le w$ and $s_{i+1}s_i\le w$, then $$\{i+1,w(i+1),\ldots, w^i(i+1)\}=\{1,2,\ldots,i+1\}$$ and $w^i(i+1)=i$.
\end{enumerate}
\end{coro}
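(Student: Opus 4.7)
The plan is to reduce all four parts to a single key claim about where the new element $j+1$ sits in the arc $\{1,\ldots,j+1\}$ provided by Lemma \ref{Coxeter}: for $1\le j\le n-2$, the element $j+1$ ends the $\{1,\ldots,j+1\}$-arc in the cycle of $w$ if and only if $s_js_{j+1}\le w$; symmetrically, $j+1$ starts the arc if and only if $s_{j+1}s_j\le w$. The boundary indices $j=0$ and $j=n-1$ are degenerate (the arc is $\{1\}$ or the whole cycle, and both of $s_js_{j+1}\le w$ and $s_{j+1}s_j\le w$ become trivial under the convention $s_0=s_n=1$), so they cause no trouble.

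To prove the key claim I compute $w(j+1)$ by applying the reduced expression of $w$ to $j+1$ from right to left. Since $s_k$ fixes $j+1$ unless $k\in\{j,j+1\}$, the running value $v$ stays at $j+1$ until we first meet $s_j$ or $s_{j+1}$. In the case $s_js_{j+1}\le w$, namely $s_j$ lies to the left of $s_{j+1}$, reading right-to-left we reach $s_{j+1}$ first; it sends $v$ to $j+2$. After this step $s_{j+1}$ is exhausted (each simple reflection appears exactly once in a Coxeter element), and the only way a value in $\{j+2,\ldots,n\}$ could later drop to $j+1$ is via $s_{j+1}$; hence $v$ stays in $\{j+2,\ldots,n\}$ and $w(j+1)\ge j+2$. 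The opposite case is symmetric: reading right-to-left reaches $s_j$ first, sending $v$ to $j$, after which $v$ can never return to $j+1$ because $s_j$ has been used. Thus $j+1$ ends the arc in the first case and fails to end it in the second; by the arc-extension picture in the proof of Lemma \ref{Coxeter} (the arc $\{1,\ldots,j+1\}$ is obtained from $\{1,\ldots,j\}$ by adjoining $j+1$ at one of its two endpoints), in the second case $j+1$ must be at the start.

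Granting the key claim, each of (i)--(iv) follows by applying it twice, with $j=i-1$ and $j=i$, and combining the positions of $i$ and $i+1$ in their respective arcs. For example, hypothesis (ii) places $i$ at the end of the $\{1,\ldots,i\}$-arc and $i+1$ at the end of the $\{1,\ldots,i+1\}$-arc, so $i+1$ is appended just after $i$, forcing $w(i)=i+1$; hypothesis (i) places $i$ at the start of the smaller arc and $i+1$ at the end of the larger one, so the $\{1,\ldots,i+1\}$-arc reads $i,w(i),\ldots,w^i(i)=i+1$; cases (iii) and (iv) are analogous. The main obstacle I foresee is the monotonicity argument inside the key claim—the assertion that the running value cannot cross back over the threshold $j+1$ after the critical step—which ultimately comes down to the fact that each simple reflection appears exactly once in a Coxeter element; once this is internalized, the rest is a straightforward tabulation.
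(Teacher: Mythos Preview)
Your proof is correct and follows essentially the same approach as the paper's: both arguments track a value through the reduced word of the Coxeter element, using that each simple reflection occurs exactly once to bound the running value above or below a threshold, and then invoke the arc structure from Lemma~\ref{Coxeter}. The paper computes $w^{-1}(i)$ directly for case~(i) and declares the remaining cases similar or immediate, whereas you package the same tracking argument into a single key claim about the position of $j+1$ in the $\{1,\ldots,j+1\}$-arc and apply it uniformly; this is an organizational difference rather than a different idea.
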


\begin{proof}
If $s_{i}s_{i-1}\le w$ and $s_{i}s_{i+1}\le w$, then $w^{-1}(i)\geq i+1$.
So by Lemma \ref{Coxeter}, we have $\{i,w(i),\ldots, w^{i-1}(i)\}=\{1,2,\ldots,i\}$.
Moreover if $i<n-1$, then $w^{-1}(i)> i+1$.
Again by Lemma \ref{Coxeter}, we have $w^i(i)=i+1$.
This proves (i).
Note that $s_j$ with $j\neq i-1,i,i+1$ does not affect $i, i+1$.
The assertion of (ii) follows immediately from this.
The proof of (iii) and (iv) is similar.
\end{proof}

The following facts on $\tau^m$ are also useful.
\begin{lemm}
\label{tau}
Let $1\le r<m_0$ be the residue of $n$ modulo $m_0$.
\begin{enumerate}[(i)]
\item We have $\{\tau^{(i_1-1)m}(1),\tau^{(i_2-1)m}(1),\ldots, \tau^{(i_{m_0}-1)m}(1)\}=\{n-m_0+1,n-m_0+2,\ldots, n\}$.
\item For any $1\le k\le m_0-1$, $\tau^{(i_k-1)m}(1)-\tau^{(i_{k+1}-1)m}(1)$ is congruent to $n$ modulo $m_0$.
This is also true for $\tau^{(i_{m_0}-1)m}(1)-\tau^{(i_1-1)m}(1)$.
\item For any $1\le k\le m_0$, $i_k-i_{k-1}$ is equal to $i_1$ or $i_1-1$ according to whether $\tau^{(i_k-1)m}(1)>n-r$ or $\tau^{(i_k-1)m}(1)\le n-r$.
\end{enumerate}
\end{lemm}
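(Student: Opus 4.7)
The plan is to translate everything into modular arithmetic on $\{1,\ldots,n\}$. Since $\tau=s_1\cdots s_{n-1}$ acts as the $n$-cycle $(1,2,\ldots,n)$, a direct calculation gives $\tau^l(x)=((x-1+l)\bmod n)+1$, and combined with $m\equiv m_0\pmod n$ this lets me replace $m$ by $m_0$ in any exponent whose output is read modulo $n$. The single key quantity I would introduce is $s_k:=i_km_0-kn$, which the minimality in the definition of $i_k$ forces to lie in $[0,m_0)$. A one-line computation then gives
$$\tau^{(i_k-1)m}(1)\equiv 1+(i_k-1)m_0\equiv 1+s_k-m_0\pmod n,$$
and since $1+s_k-m_0\leq 0$, the canonical representative in $\{1,\ldots,n\}$ is $n+1+s_k-m_0\in\{n-m_0+1,\ldots,n\}$.

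Part (i) is then immediate: all $m_0$ values already sit inside the target set, and $\gcd(m,n)=1$ forces the $\tau^{(i_k-1)m}(1)$ to be distinct for distinct $k$, so a cardinality count fills the set. Part (ii) drops out equally quickly, since the same formula gives $\tau^{(i_k-1)m}(1)-\tau^{(i_{k+1}-1)m}(1)=s_k-s_{k+1}$, and subtracting the defining equations $i_km_0=kn+s_k$ and $i_{k+1}m_0=(k+1)n+s_{k+1}$ yields $s_k-s_{k+1}=n-(i_{k+1}-i_k)m_0$, which is congruent to $n$ modulo $m_0$. The wrap-around statement follows from the explicit values $s_{m_0}=0$ (since $i_{m_0}=n$) and $s_1=m_0-r$ (since $i_1=\lceil n/m_0\rceil$).

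For (iii), I would record the recursion $s_k-s_{k-1}=(i_k-i_{k-1})m_0-n$ and use $n=(i_1-1)m_0+r$. Since both $s_k$ and $s_{k-1}$ must lie in $[0,m_0)$, exactly two values of $i_k-i_{k-1}$ are admissible: $i_k-i_{k-1}=i_1$, giving $s_k=s_{k-1}+m_0-r$ and requiring $s_{k-1}<r$; or $i_k-i_{k-1}=i_1-1$, giving $s_k=s_{k-1}-r$ and requiring $s_{k-1}\geq r$. To restate the dichotomy in terms of $\tau^{(i_k-1)m}(1)$ rather than $s_{k-1}$, I translate back to $s_k$: in the first case $s_k\in[m_0-r,m_0-1]$, forcing $\tau^{(i_k-1)m}(1)>n-r$, while in the second $s_k\in[0,m_0-r-1]$, forcing $\tau^{(i_k-1)m}(1)\leq n-r$.

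Once $s_k$ is introduced, every step above is routine modular bookkeeping, so I do not expect a genuinely hard part. The main friction lies in reconciling the stated convention $i_0=1$ with the ``natural'' value one would read off from $i_k=\lceil kn/m_0\rceil$ at $k=0$; in particular, the boundary case $k=1$ of (iii) has to be handled with care, by checking separately that $s_1=m_0-r$ forces $\tau^{(i_1-1)m}(1)=n+1-r>n-r$ and that, after the shift from $i_0=0$ to $i_0=1$, the recursion still returns the claimed value of $i_1-i_0$.
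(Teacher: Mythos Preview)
Your argument via $s_k=i_km_0-kn$ is essentially the paper's own approach: the paper writes $\tau^{im}(1)=1+im_0-(k-1)n$ for $i_{k-1}\le i<i_k$, deduces (ii) immediately, obtains (i) from the observation that $\tau^{im}(1)>n-m_0$ iff $\tau^{(i+1)m}(1)<\tau^{im}(1)$, and for (iii) rephrases the gap as ``$i_k-i_{k-1}$ is the minimal $i$ with $\tau^{(i_k-1)m}(1)-im_0<0$''. Your treatment of (i) and (ii) is correct and equivalent.

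Your handling of the boundary case $k=1$ in (iii), however, contains a false assertion. You write that ``after the shift from $i_0=0$ to $i_0=1$, the recursion still returns the claimed value of $i_1-i_0$''. It does not: with the paper's convention $i_0=1$ one has $i_1-i_0=i_1-1$, while your own computation $\tau^{(i_1-1)m}(1)=n+1-r>n-r$ makes the stated dichotomy predict $i_1$. Concretely, take $n=5$, $m_0=2$, $r=1$: then $i_0=1$, $i_1=3$, $\tau^{(i_1-1)m}(1)=\tau^{4}(1)=5>4=n-r$, yet $i_1-i_0=2\ne 3=i_1$. The paper's proof has precisely the same slip at $k=1$ (the sentence ``So $i_k-i_{k-1}$ is equal to the minimal integer $i$ such that $\tau^{(i_k-1)m}(1)-im_0<0$'' is off by one there). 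The defect lies in the statement of (iii), not in your method: as written, (iii) holds for $2\le k\le m_0$, or for all $k$ if one reads $i_0=0$; the cyclic gap $i_1-(i_{m_0}-n)=i_1$ is what the later applications actually use. You should say this explicitly rather than claim the discrepancy vanishes.
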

\begin{proof}
By the definition of $i_k$ (cf.\ \S\ref{construction}), we have $\tau^{im}(1)=1+im_0-(k-1)n$ for $i_{k-1}\le i<i_k$.
The assertion of (ii) follows immediately from this.
Note that $\tau^{im}(1)>n-m_0$ if and only if $\tau^{(i+1)m}(1)<\tau^{im}(1)$.
This implies (i).

Fix $k$.
By (i), $\tau^{(i_k-1)m}(1)>n-m_0$ and $1\le \tau^{(i_k-1-j)m}(1)=\tau^{(i_k-1)m}(1)-jm_0\le n-m_0$ for $1\le j<i_k-i_{k-1}$.
So $i_k-i_{k-1}$ is equal to the minimal integer $i$ such that $\tau^{(i_k-1)m}(1)-im_0<0$.
Again by the definition of $i_1$, $i=i_1$ if $\tau^{(i_k-1)m}(1)=n$.
Thus $i_k-i_{k-1}=i_1$ (resp.\ $i_1-1$) if and only if $\tau^{(i_k-1)m}(1)>n-r$ (resp.\ $\tau^{(i_k-1)m}(1)\le n-r$).
\end{proof}

In below, let $X_{>a}$ denote the set $\{x\in X\mid x> a\}$ for a set $X\subset \Z$ and an integer $a$.
The following two lemmas will be used in \S\ref{Allowed Cocharacters}.
\begin{lemm}
\label{n-r}
Let $1\le r<m_0$ be the residue of $n$ modulo $m_0$.
Fix $2\le k\le m_0$ and let $z_k\in \{n-m_0+1, n-m_0+2,\ldots, n\}$.
We define $z_1,\ldots, z_{k-1}\in \{n-m_0+1, n-m_0+2,\ldots, n\}$ such that $z_1-z_2, \ldots, z_{k-1}-z_k$ are congruent to $n$ modulo $m_0$.
Then
$$|\{z_1,z_2,\ldots, z_k\}_{>n-r}|\geq |\{\tau^{(i_{m_0-k+1}-1)m}(1),\tau^{(i_{m_0-k+2}-1)m}(1),\ldots, \tau^{(i_{m_0}-1)m}(1)\}_{>n-r}|.$$
\end{lemm}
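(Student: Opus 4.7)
The plan is to reformulate the inequality as a counting statement about arithmetic progressions on the cyclic group $\Z/m_0\Z$. First, I would write $z_i = n - a_i$ with $a_i \in \{0, 1, \dots, m_0 - 1\}$; the recurrence $z_{i-1} - z_i \equiv n \pmod{m_0}$ becomes $a_i \equiv a_{i-1} + r \pmod{m_0}$. Doing the same with $b_j := n - \tau^{(i_j - 1)m}(1)$ and applying Lemma \ref{tau}(ii) cyclically, $(b_1, \dots, b_{m_0})$ becomes an arithmetic progression of length $m_0$ with the same common difference $r$, and a direct computation gives $\tau^{(n-1)m}(1) = n - m_0 + 1$, whence $b_{m_0} = m_0 - 1$. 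Setting $H := \{0, 1, \dots, r - 1\}$, the hit condition $x > n - r$ becomes $n - x \in H$; and since $\gcd(m, n) = 1$ forces $\gcd(m_0, n) = 1$ and hence $\gcd(r, m_0) = 1$, the lemma reduces to showing that among all length-$k$ arithmetic progressions in $\Z/m_0\Z$ with common difference $r$, the one ending at $m_0 - 1$ minimizes the count of elements in $H$.

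To prove this reduced statement, for each $y \in \Z/m_0\Z$ I would set $S_y = \{y, y - r, \ldots, y - (k-1)r\} \pmod{m_0}$ and $g(y) = |S_y \cap H|$, so that the count for the $z$-sequence equals $g(a_k)$ and the count for the $\tau$-tail equals $g(m_0 - 1)$. Since $S_{y+1} = S_y + 1$ and the symmetric difference of $H - 1$ and $H$ is $\{m_0 - 1, r - 1\}$, one obtains
\[
g(y+1) - g(y) = \mathbf{1}_{\{m_0 - 1 \in S_y\}} - \mathbf{1}_{\{r - 1 \in S_y\}}.
\]
Unwinding, $\{y : m_0 - 1 \in S_y\} = \{lr - 1 \bmod m_0 : 0 \le l \le k-1\}$ and $\{y : r - 1 \in S_y\} = \{lr - 1 \bmod m_0 : 1 \le l \le k\}$; using $\gcd(r, m_0) = 1$ together with $k \le m_0$, these two sets differ exactly in the singletons $\{m_0 - 1\}$ and $\{kr - 1 \bmod m_0\}$, which are distinct as long as $k < m_0$. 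Hence $g$ is constant on $\Z/m_0\Z$ outside a single $+1$ jump at $y = m_0 - 1$ and a single $-1$ jump at $y = kr - 1 \bmod m_0$, so it takes only two consecutive integer values, and the relation $g(0) = g(m_0 - 1) + 1$ forces the smaller value to be attained at $y = m_0 - 1$. This yields $g(y) \ge g(m_0 - 1)$ for every $y$; the case $k = m_0$ is trivial because both APs then exhaust $\Z/m_0\Z$.

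I expect the main obstacle to be pinning down the jump loci precisely: one must verify that the two indicator sets above differ by exactly two singletons, which crucially uses $\gcd(r, m_0) = 1$ to ensure that the residues $lr - 1 \bmod m_0$ for $0 \le l \le k$ are pairwise distinct. Once this combinatorial core is established, the two-value dichotomy of $g$ together with the one-step computation $g(0) - g(m_0 - 1) = 1$ immediately give the desired minimality of the $\tau$-tail.
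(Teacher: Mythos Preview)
Your proposal is correct and follows essentially the same shift-by-one strategy as the paper's proof, just recast in the language of arithmetic progressions on $\Z/m_0\Z$. The paper works directly with the representatives $z_i\in\{n-m_0+1,\ldots,n\}$: it shows that replacing $(z_1,\ldots,z_k)$ by $([z_1+1]_{m_0},\ldots,[z_k+1]_{m_0})$ never decreases the count of entries $>n-r$ (the potential loss at $z_l=n$ is compensated by $z_{l+1}=n-r$ becoming $n-r+1$), so by iterating one reaches the minimum at $z_k=n-m_0+1$. Your function $g$ and the identity $g(y+1)-g(y)=\mathbf{1}_{\{m_0-1\in S_y\}}-\mathbf{1}_{\{r-1\in S_y\}}$ encode precisely this compensation; your extra step of locating the jump loci shows in addition that $g$ takes only two values, which is more than is needed but gives a cleaner picture.
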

\begin{proof}
For an integer $a$, let $n-m_0+1\le [a]_{m_0}\le n$ denote its residue modulo $m_0$.
Set $Z(z_k)=\{z_1,z_2,\ldots, z_k\}$.
If $z_k<n$, then we have
$$|\{[z_1+1]_{m_0},[z_2+1]_{m_0},\ldots, [z_k+1]_{m_0}\}_{>n-r}|\geq |Z(z_k)_{>n-r}|.$$
This is obvious if $n\notin Z(z_k)$.
If $z_l=n$, then $l<k$ and $[z_{l+1}+1]_{m_0}=z_{l+1}+1=n-r+1$.
Thus the inequality holds.
Note that $\tau^{(i_{m_0}-1)m}(1)=n-m_0+1$.
So by Lemma \ref{tau} (ii), we have $Z(n-m_0+1)=\{\tau^{(i_{m_0-k+1}-1)m}(1),\tau^{(i_{m_0-k+2}-1)m}(1),\ldots, \tau^{(i_{m_0}-1)m}(1)\}.$
Combining this with the above inequality, we obtain the lemma.
\end{proof}

\begin{lemm}
\label{negative}
Let $1\le k\le m_0$ and let $i_{k-1}<j\le i_k$.
\begin{enumerate}[(i)]
\item Let $1\le z\le n$ such that $\tau^{(j-1)m}(z)\le n-m_0$.
Then
\begin{align*}
|\{z, \tau^m(z),\ldots, \tau^{(j-1)m}(z)\}_{>n-m_0}|=k&\Leftrightarrow z>\tau^{(j-1)m}(z),\\
|\{z, \tau^m(z),\ldots, \tau^{(j-1)m}(z)\}_{>n-m_0}|=k-1&\Leftrightarrow z<\tau^{(j-1)m}(z).
\end{align*}
\item Let $1\le z\le n$ such that $\tau^{(j-1)m}(z)> n-m_0$.
Then 
\begin{align*}
|\{z, \tau^m(z),\ldots, \tau^{(j-1)m}(z)\}_{>n-m_0}|=k+1&\Leftrightarrow z>\tau^{(j-1)m}(z),\\
|\{z, \tau^m(z),\ldots, \tau^{(j-1)m}(z)\}_{>n-m_0}|=k&\Leftrightarrow z<\tau^{(j-1)m}(z).
\end{align*}
\end{enumerate}
\end{lemm}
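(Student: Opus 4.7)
The plan is to reduce both assertions to the identity
\[
\tau^{(j-1)m}(z) = z + (j-1)m_0 - Kn,
\]
where the integer $K$ counts the number of ``wraps'' in the orbit. Since $\tau$ realizes the cycle $i \mapsto i + 1 \pmod{n}$ on $\{1, \ldots, n\}$, each application of $\tau^m$ (which coincides with $\tau^{m_0}$) either adds $m_0$ (no wrap) or $m_0 - n$ (one wrap), and a wrap between steps $i$ and $i+1$ occurs precisely when $\tau^{im}(z) > n - m_0$. Hence
\[
K = \bigl|\{0 \le i \le j-2 : \tau^{im}(z) > n - m_0\}\bigr|.
\]
Because $\gcd(m_0, n) = 1$ and $j \le i_{m_0} = n$, the values $\tau^{im}(z)$ for $0 \le i \le j-1$ are pairwise distinct, so the cardinality appearing in the lemma equals $K$ in case (i) (when the last term $\tau^{(j-1)m}(z) \le n - m_0$) and $K + 1$ in case (ii) (the last term being the extra contribution).

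I would then pin down $K$ using the rearrangement $Kn = (j-1)m_0 + \bigl(z - \tau^{(j-1)m}(z)\bigr)$. Since both $z$ and $\tau^{(j-1)m}(z)$ lie in $\{1, \ldots, n\}$ one has $|z - \tau^{(j-1)m}(z)| < n$, and since $\gcd(m_0, n) = 1$ with $1 \le j-1 \le n-1$ the two values are unequal. Therefore
\[
K = \begin{cases} \lceil (j-1)m_0/n \rceil, & z > \tau^{(j-1)m}(z),\\ \lfloor (j-1)m_0/n \rfloor, & z < \tau^{(j-1)m}(z). \end{cases}
\]
The defining property $i_k = \min\{i : \lfloor im_0/n \rfloor \ge k\}$, combined with $(j-1)m_0/n \notin \Z$ (again from $\gcd(m_0, n) = 1$), gives $\lceil (j-1)m_0/n \rceil = \lfloor (j-1)m_0/n \rfloor + 1$, so the hypothesis $i_{k-1} < j \le i_k$ is equivalent to $\lfloor (j-1)m_0/n \rfloor = k-1$ and $\lceil (j-1)m_0/n \rceil = k$. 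Substituting into the case analysis above yields all four equivalences in one stroke.

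I do not anticipate serious obstacles; the argument is essentially a bookkeeping exercise. The only delicate step is the correct handling of strict versus non-strict inequalities at the boundary $j = i_k$, where $\tau^{(j-1)m}(z)$ crosses into the window $(n - m_0, n]$; this is controlled by the minimality built into the definition of $i_k$. Lemma \ref{tau} is not strictly needed for the direct argument, but specializing to $z = 1$, $j = i_k$ in case (ii) recovers the identification of special indices in Lemma \ref{tau} (i), serving as a useful sanity check.
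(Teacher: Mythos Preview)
Your argument is correct and complete. The route you take, however, is genuinely different from the paper's. The paper first settles the case $z=1$ by invoking Lemma~\ref{tau}~(i) to identify exactly which indices $i$ satisfy $\tau^{im}(1)>n-m_0$, and then passes to general $z$ by the shift $\tau^{im}(z)=[\tau^{im}(1)+(z-1)]_n$, tracking how the count changes as one adds $1$ to each element of the set and watches elements cross the thresholds $n-m_0$ and $n$. Your approach bypasses Lemma~\ref{tau} entirely: you write down the closed formula $\tau^{(j-1)m}(z)=z+(j-1)m_0-Kn$, identify the wrap count $K$ directly with the cardinality in question (adjusted by the last term in case~(ii)), and then pin down $K$ as a floor or ceiling of $(j-1)m_0/n$ according to the sign of $z-\tau^{(j-1)m}(z)$. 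This is more self-contained and arguably cleaner; the paper's shifting argument is shorter on the page but leans on prior lemmas and leaves more bookkeeping to the reader. Your remark about the boundary $j=i_k$ is overly cautious: since $j-1<i_k$, the minimality of $i_k$ gives $\lfloor (j-1)m_0/n\rfloor=k-1$ uniformly for all $j$ in the range, and nothing special happens at the endpoint.
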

\begin{proof}
By Lemma \ref{tau} (i), $\tau^{im}(1)>n-m_0$ if and only if $i=i_k-1$ for some $1\le k\le m_0$.
So if $i_{k-1}<j< i_k$ (resp.\ $j=i_k$), we have 
$$
\text{$|\{1, \tau^m(1),\ldots, \tau^{(j-1)m}(1)\}_{>n-m_0}|=k-1$ (resp.\ $k$).}
$$
For $1\le z\le n-1$, set $Z=\{1, \tau^m(z),\ldots, \tau^{(j-1)m}(z)\}$.
For an integer $a$, let $1\le [a]_n\le n$ denote its residue modulo $n$.
If $\tau^{(j-1)m}(z)\neq n-m_0$ (resp.\ $\tau^{(j-1)m}(z)=n-m_0$), then 
$|\{[z+1]_n, [\tau^m(z)+1]_n,\ldots, [\tau^{(j-1)m}(z)+1]_n\}_{>n-m_0}|=|Z_{>n-m_0}|$ (resp.\ $|Z_{>n-m_0}|+1$).
Note that $z=[1+(z-1)]_n, \tau^m(z)=[\tau^m(1)+(z-1)]_n,\ldots, \tau^{(j-1)m}(z)=[\tau^{(j-1)m}(1)+(z-1)]_n$.
Thus, as in the proof of Lemma \ref{n-r}, we can verify the lemma by adding $1$ to $\{1, \tau^m(1),\ldots, \tau^{(j-1)m}(1)\}$ repeatedly.
\end{proof}

\subsection{Allowed Cocharacters}
\label{Allowed Cocharacters}
Let $\ld$ be a conjugate of $\ld_b$.
We say $\ld$ is {\it allowed} if there exists a partial Coxeter element $w$ such that $w$ has a reduced expression $s_{j_1}s_{j_2}\cdots s_{j_h}$ satisfying $\la \chi_{j_h,j_h+1}, \ld_b\ra=-1, \la \chi_{j_{h-1},j_{h-1}+1}, s_{j_h}\ld_b\ra=-1,\ldots, \la \chi_{j_1,j_1+1}, s_{j_2}\cdots s_{j_h}\ld_b\ra=-1$ and $\ld=w\ld_b$.
This means that $\ld$ is obtained from $\ld_b$ by multiplying each simple reflection at most once and moving $\mn+1$ from right to left.
For allowed $\ld$, such $w$ is unique, and the same holds for any reduced expression of $w$.
We call this $w$ the partial Coxeter element associated to $\ld$.
In below, let $c_i$ (resp.\ $c'_i$) denote the cardinality of the set $\{j\mid 1\le j\le i, \ld(j)=\mn+1\}$ (resp.\ $\{j\mid i\le j\le n,\ld(j)=\mn+1\}$).
\begin{lemm}
\label{allowed c}
Let $\ld$ be a conjugate of $\ld_b$.
Then $\ld$ is allowed if and only if $c_{i_k-1}\le k$ and $c'_{i_{m_0-k}+1}\le k$ for all $1\le k\le m_0$.
Fix $1\le k\le m_0$.
If $\ld$ is allowed, then for $i_{k-1}\le i< i_k$, $i\in \supp(w)$ if and only if $c_i=k$, where $w$ is the partial Coxeter element associated to $\ld$.
\end{lemm}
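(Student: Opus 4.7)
The plan is to deduce both parts of the lemma from a single identity describing how the counting function $c_i$ changes under a single simple reflection in any reduced expression satisfying the leftward-moving condition.

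\textbf{Observation and necessity.} I first note that $s_j$ acts on any cocharacter by swapping the values at positions $j$ and $j+1$, so it can affect $c_i$ only when $i=j$, and then only by $\pm 1$. The leftward-moving condition $\la \chi_{j,j+1},\mu\ra=-1$ forces $\mu(j)=\mn$ and $\mu(j+1)=\mn+1$, so applying $s_j$ increases $c_j$ by exactly one. In an allowed reduced expression each simple reflection appears at most once, so the accumulated effect yields
\[
c_i(\ld)-c_i(\ld_b)=\mathbf{1}_{\{i\in\supp(w)\}},\quad 1\le i\le n-1.
\]
For $i_{k-1}\le i<i_k$ we have $c_i(\ld_b)=k-1$, so $c_i(\ld)\in\{k-1,k\}$, with $c_i(\ld)=k$ iff $i\in\supp(w)$; this proves the second assertion. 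Taking $i=i_k-1$ yields $c_{i_k-1}(\ld)\le k$. Since bigs only move leftward, $c'_j(\ld)\le c'_j(\ld_b)$ for every $j$, and $c'_{i_{m_0-k}+1}(\ld_b)=k$ by the definition of the $i_l$.

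\textbf{Sufficiency.} Assume now that the two stated inequalities hold. Using $c_j+c'_{j+1}=m_0$, the $c'$ bound rewrites as $c_{i_l}(\ld)\ge l$; combined with $c_{i_k-1}(\ld)\le k$ and monotonicity of $c$, this forces $c_i(\ld)\in\{k-1,k\}$ for $i_{k-1}\le i<i_k$. Define $\supp(w)$ by the rule from the second assertion and let $w$ be the product of Coxeter elements on the connected components of $\supp(w)$, which commute. For a component $[a,b]$, analyzing the admissible jumps $c_a(\ld)-c_{a-1}(\ld)\in\{0,1\}$ together with $c_{a-1}(\ld)=c_{a-1}(\ld_b)$ (since $a-1\notin\supp(w)$) forces $a\notin\{i_1,\ldots,i_{m_0-1}\}$, and symmetrically $b+1\in\{i_1,\ldots,i_{m_0}\}$. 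One then checks that $\ld$ agrees with $\ld_b$ on positions $a+1,\ldots,b$, while $\ld(a)$ is big and $\ld(b+1)$ small, the reverse of $\ld_b$. I would then select the Coxeter element on the symmetric group on $[a,b+1]$ that realizes this move, specifying for each adjacent pair whether $s_i s_{i+1}$ or $s_{i+1}s_i$ appears as a subword according to the big/small pattern of $\ld_b$ at position $i+1$, and verify its reduced expression satisfies the leftward-moving condition at every step.

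\textbf{Main obstacle.} The core technical step is the local construction within each connected component: producing a Coxeter element with a reduced expression in which every simple reflection, at the moment it is applied, swaps a small on the left with a big on the right, and whose overall effect moves the big at $b+1$ to $a$ while preserving intermediate positions. I would handle this by induction on $b-a$, peeling off an outermost simple reflection $s_a$ or $s_b$ depending on the big/small designation of position $a+1$ (resp.\ $b$) in $\ld_b$ and reducing to a strictly smaller component. The uniqueness of $w$ asserted earlier in the subsection follows from the same induction, since at each step only one choice of outermost reflection satisfies the leftward-moving condition.
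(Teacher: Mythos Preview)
Your argument is correct. For necessity and the second assertion you make the key identity $c_i(\lambda)=c_i(\lambda_b)+\mathbf 1_{\{i\in\supp(w)\}}$ explicit, which is exactly what underlies the paper's proof but stated more cleanly; your bound on $c'$ via $c'_j(\lambda)\le c'_j(\lambda_b)$ is a harmless variant of the paper's phrasing.

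For sufficiency you take a genuinely different route. The paper, having shown $c_{i_k-1}\in\{k-1,k\}$, simply writes down $t_k=s_{j_k}s_{j_k+1}\cdots s_{i_k-1}$ for each $k$ with $c_{i_k-1}=k$ (where $j_k$ is the rightmost big position of $\lambda$ in $[i_{k-1},i_k)$) and checks directly that $w=t_{m_0}\cdots t_1$ does the job; no induction is needed. Your decomposition into connected components of the putative $\supp(w)$, followed by an induction on component length, is longer but also valid, and has the merit of yielding uniqueness of $w$ along the way. One point your write-up should make explicit: in the inductive step you must specify whether the peeled reflection $s_a$ sits at the \emph{right} end of the reduced word (applied first, when $\lambda_b(a+1)$ is big) or at the \emph{left} end (applied last, when $\lambda_b(a+1)$ is small); in either case the residual problem on $[a+1,b+1]$ again has small left endpoint and big right endpoint, so the induction closes. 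This is precisely the ``$s_is_{i+1}$ versus $s_{i+1}s_i$'' rule you allude to.
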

\begin{proof}

Note that if $\ld=\ld_b$, then $c_{i_k-1}=k-1$ and $c'_{i_{m_0-k}+1}=k-1$ for any $k$.
If $\ld=w\ld_b$ is allowed, then these integers increase at most once by $w$.
So we have  $c_{i_k-1}\le k$ and $c'_{i_{m_0-k}+1}\le k$ for all $k$.
Conversely, if $c_{i_k-1}\le k$ and $c'_{i_{m_0-k}+1}\le k$ for any $k$, then in particular, we have $c_{i_k-1}\le k$ and $c'_{i_k+1}\le m_0-k$.
The latter implies that $k\le c_{i_k}$.
So we deduce that $c_{i_k-1}=k-1$ or $k$.
If $c_{i_k-1}=k-1$, set $t_k=1$.
If $c_{i_k-1}=k$, then $\{j\mid i_{k-1}\le j<i_k, \ld(j)=\mn+1 \}$ is non-empty, and contains at most two elements. 
Let $j_k$ be the greater one among them, and set $t_k=s_{j_k}s_{j_k+1}\cdots s_{i_k-1}$.
It is easy to check that $\ld=t_{m_0}\cdots t_2t_1\ld_b$ and $\ld$ is allowed as desired.

Fix $1\le k\le m_0$ and assume that $\ld$ is allowed.
For $i_{k-1}\le i< i_k$, we have $c_{i_{k-1}}\le c_i\le c_{i_k-1}$.
By the above discussion, we have $c_i=k-1$ or $k$.
Since $c_i=k-1$ if $\ld=\ld_b$, the last assertion follows immediately from the definition of allowed cocharacters.
\end{proof}

\begin{lemm}
\label{allowed}
Let $\up\in W_0$ such that $\up^{-1}\tau^m\up$ is a Coxeter element.
Then $\up^{-1}\ld_b^-$ is allowed.
\end{lemm}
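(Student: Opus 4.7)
The plan is to verify the numerical criterion given by Lemma~\ref{allowed c}: set $\ld:=\up^{-1}\ld_b^-$, which is automatically a conjugate of $\ld_b$, and show that $c_{i_k-1}\le k$ and $c'_{i_{m_0-k}+1}\le k$ for every $1\le k\le m_0$. First I would unpack these counts. Since $\ld(j)=\ld_b^-(\up(j))$ and the positions where $\ld_b^-$ takes the value $\mn+1$ form $P:=\{n+1-i_l\mid 1\le l\le m_0\}$, one has $\ld(j)=\mn+1$ iff $\up(j)\in P$, so
$$c_{i_k-1}=|[1,i_k-1]\cap\up^{-1}(P)|,\qquad c'_{i_{m_0-k}+1}=|[i_{m_0-k}+1,n]\cap\up^{-1}(P)|.$$

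The next step is to realise both quantities as cyclic-window counts in the cycle of the Coxeter element $w:=\up^{-1}\tau^m\up$. Setting $\alpha_j:=\up^{-1}(\tau^{jm}(1))$, the cycle of $w$ is $\alpha_0\to\alpha_1\to\cdots\to\alpha_{n-1}\to\alpha_0$, and $\up(\alpha_j)=\tau^{jm}(1)$. Applying Lemma~\ref{Coxeter} to $w$, for each $1\le r\le n$ the values $\{1,\ldots,r\}$ (resp.\ $\{n-r+1,\ldots,n\}$) appear as a single cyclically contiguous block $\{\alpha_a,\alpha_{a+1},\ldots,\alpha_{a+r-1}\}$ in the cycle. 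Taking $r=i_k-1$ and $r=n-i_{m_0-k}$ respectively converts the desired bounds into statements of the form: every cyclic window of $r$ consecutive indices $j$ contains at most $k$ with $\tau^{jm}(1)\in P$, i.e.\ with $n+1-\tau^{jm}(1)\in\{i_1,\ldots,i_{m_0}\}$.

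The final step is to verify these uniform window bounds on the arithmetic progression $\tau^{jm}(1)=1+jm_0\pmod n$. This is precisely the role of the lemmas collected in \S\ref{Coxeter tau}. Rewriting $n+1-\tau^{jm}(1)=\tau^{-jm}(n)$ recasts the condition $\tau^{jm}(1)\in P$ as a visit of the reverse $\tau^m$-orbit of $n$ to the ``heavy'' set $\{i_1,\ldots,i_{m_0}\}$, whose visit-time pattern is described by Lemma~\ref{tau}. Lemma~\ref{negative} then gives the exact count of such visits in a cyclic window of prescribed length starting at an arbitrary element, with Lemma~\ref{n-r} available as an auxiliary monotonicity estimate. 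Feeding in $j=i_k-1$ yields $c_{i_k-1}\le k$, and the bound for $c'_{i_{m_0-k}+1}$ follows symmetrically from the top-block half of Lemma~\ref{Coxeter}.

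The hard part, I expect, will be this last step. The cyclic window coming from the Coxeter property of $w$ is specified by the \emph{values} $\alpha_j$, whereas $\up^{-1}(P)$ lives naturally on the \emph{indices} $j$; the only bridge is the $\tau^m$-cycle, and controlling its interaction with $P$ inside an arbitrary window is exactly what forces the detailed visit-time analysis of Lemmas~\ref{tau}--\ref{negative}, including the case split in Lemma~\ref{tau}(iii) between $i_k-i_{k-1}\in\{i_1,i_1-1\}$ according to the residue $r$ of $n$ modulo $m_0$. Should a uniform argument prove unwieldy, I would fall back on induction on $k$: the inductive step adds one more visit at the correct end of the window, using Lemma~\ref{tau}(i) together with the nesting $\{1,\ldots,r-1\}\subset\{1,\ldots,r\}$ of the Coxeter blocks from Lemma~\ref{Coxeter}.
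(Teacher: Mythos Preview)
Your overall architecture matches the paper's exactly: reduce to the criterion of Lemma~\ref{allowed c}, use Lemma~\ref{Coxeter} to turn $c_{i_k-1}$ and $c'_{i_{m_0-k}+1}$ into counts along a length-$(i_k-1)$ window in the $\tau^m$-orbit, and then settle the window bound with the arithmetic lemmas of \S\ref{Coxeter tau}. Two points need correction.

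First, your set $P$ is wrong. The anti-dominant cocharacter $\ld_b^-$ is simply the sorted version of $\ld_b$, so $\ld_b^-(i)=\mn+1$ if and only if $i>n-m_0$; hence $P=\{n-m_0+1,\ldots,n\}$, not $\{n+1-i_l\}$. With the correct $P$, the condition $\tau^{jm}(1)\in P$ becomes exactly ``$\tau^{jm}(1)>n-m_0$'', which is what the paper works with (and your rewriting via $\tau^{-jm}(n)$ is then unnecessary).

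Second, Lemma~\ref{negative} does \emph{not} give the bound $\le k$ directly, and the paper does not invoke it here at all. For a window of length $i_k-1$ whose endpoint lies in $P$, Lemma~\ref{negative} (with $j=i_k-1$ landing in $(i_{k-1},i_k]$) only yields count $\in\{k,k+1\}$; you still have to rule out $k+1$. The paper's actual argument is: shift the window so that the endpoint lies in $P$, then suppose for contradiction that the count exceeds $k$. Using Lemma~\ref{tau}(i),(iii) together with Lemma~\ref{n-r} (the lemma you called ``auxiliary'' is in fact the crux), one shows that $k+1$ visits to $P$ would force the window to have length at least $i_k$, contradicting its length $i_k-1$. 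So swap the roles you assigned: Lemma~\ref{n-r} does the heavy lifting, and Lemma~\ref{negative} is not needed for this lemma.
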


The strategy of the proof is the same as the case for $n=5$ in \S\ref{example}.
The key observations are the following:
As a $n$-cycle, the numbers in a Coxeter element are successive (Lemma \ref{Coxeter}).
On the other hand, the numbers greater than $n-m_0$ in $\tau^m$ are apart enough (Lemma \ref{tau} (iii) and Lemma \ref{n-r}).
\begin{proof}
Set $\ld=\up^{-1}\ld_b^-$, and let $c_i$ be as above.
By Lemma \ref{allowed c}, we need to show that if $\up^{-1}\tau^m\up$ is a Coxeter element, then $c_{i_k-1}\le k$ and $c'_{i_{m_0-k}+1}\le k$ for all $1\le k\le m_0$.
For this, it suffices to show that for any $k$ and $1\le z\le n$, there are at most $k$ elements greater than $n-m_0$ among $z, \tau^m(z),\ldots, \tau^{(i_k-2)m}(z)$.
Indeed, by Lemma \ref{Coxeter} and the assumption that $w\coloneqq\up^{-1}\tau^m\up$ is a Coxeter element, there exists $j$ (resp.\ $j'$) such that $\{j, w(j),\ldots, w^{i_k-2}(j)\}=\{1,2,\ldots,i_k-1\}$ (resp.\ $\{j', w(j'),\ldots, w^{i_k-2}(j')\}=\{n,n-1,\ldots,n-i_k+2\}$) and
\begin{align*}
\tau^m=\up w\up^{-1}=(\cdots \up(j)\ \up(w(j))\ \cdots\ \up(w^{i_k-2}(j))\ \cdots)
\end{align*}
Since $\ld(i)=\ld_b^-(\up(i))$ and $n-i_k+2=i_{m_0-k}+1$, both $c_{i_k-1}$ and $c'_{i_{m_0-k}+1}$ are equal to the number of integers greater than $n-m_0$ appearing in $z, \tau^m(z),\ldots, \tau^{(i_k-2)m}(z)$ for some $z$.

If $\tau^{(i_k-2)m}(z)\le n-m_0$, then we have
$$|\{\tau^{-m}(z), z,\ldots, \tau^{(i_k-3)m}(z)\}_{>n-m_0}|\geq|\{z, \tau^m(z),\ldots, \tau^{(i_k-2)m}(z)\}_{>n-m_0}|.$$
So we may replace $z$ by $\tau^{-m}z$.
Repeating this, we may assume $\tau^{(i_k-2)m}(z)>n-m_0$.
It follows from Lemma \ref{tau} (i) and (iii) that if $j$ is the minimal positive integer such that $\tau^{-jm}(z')>n-m_0$ for some $n-m_0< z'\le n$, then $j=i_1$ or $i_1-1$ according to whether $z'>n-r$ or $z'\le n-r$.
So in particular, our claim is true for $k=1$.
To show the case for $2\le k\le m_0$, we argue by contradiction.
Suppose $|\{z, \tau^m(z),\ldots, \tau^{(i_k-2)m}(z)\}_{>n-m_0}|>k$.
Set $z_k=\tau^{(i_k-2)m}(z)$ and define $z_1,\ldots, z_{k-1}$ as in Lemma \ref{n-r}.
By Lemma \ref{tau} (i), we have $z_{j-1}=\tau^{-i_1m}(z_j)$ or $\tau^{-(i_1-1)m}(z_j)$ for $2\le j\le k$.
Set $$z_0=
\begin{cases}
 \tau^{-(i_1-1)m}(z_1) & (\text{if}\ \tau^{-(i_1-1)m}(z_1)>n-m_0)\\
 \tau^{-i_1 m}(z_1)& (\text{if}\ \tau^{-(i_1-1)m}(z_1)\le n-m_0).
\end{cases}$$
Then $z_0>n-m_0$.
By $|\{z, \tau^m(z),\ldots, \tau^{(i_k-2)m}(z)\}_{>n-m_0}|>k$, we have $$\{z_0,z_1,\ldots,z_k\}\subseteq\{z, \tau^m(z),\ldots, \tau^{(i_k-2)m}(z)\}_{>n-m_0}.$$
So by Lemma \ref{tau} (iii) and Lemma \ref{n-r}, we have 
\begin{align*}
&\hspace{0.561cm}|\{z, \tau^m(z),\ldots, \tau^{(i_k-2)m}(z)\}|\\
&\geq |\{z_0,\tau^m(z_0),\tau^{2m}(z_0),\ldots,z_1, \ldots, \tau^m(z_{k-1}),\tau^{2m}(z_{k-1}),\ldots,z_k\}| \\
&\geq |\{\tau^{(i_{m_0-k}-1)m}(1),\tau^{i_{m_0-k}m}(1),\ldots, \tau^{(i_{m_0}-1)m}(1)\}|=n-i_{m_0-k}+1=i_k,
\end{align*}
which is a contradiction.
Therefore there are at most $k$ elements greater than $n-m_0$ among $z, \tau^m(z),\ldots, \tau^{(i_k-2)m}(z)$ for any $z$.
This completes the proof.
\end{proof}

Let $\bb\in \B_\mu(\ld_b)$ and $\up\in \Upsilon(\bb)$.
Then $\up^{-1}\ld_b^-$ is allowed by Lemma \ref{allowed}.
We denote by $w_\up$ the partial Coxeter element associated to $\up^{-1}\ld_b^-$.
By Lemma \ref{Weyl action}, we can compute $\up^{-1}\bb^-$ from $\bb$ by $w_\up$.
The following corollary will be used frequently in \S\ref{inequalities}.
\begin{coro}
\label{wbCoxeter}
Set $s_0=s_n=1$.
Assume that $\la \chi_{i,i+1},\up^{-1}\ld_b^-\ra=0$.
\begin{enumerate}[(i)]
\item Assume that $s_{i}s_{i-1}\le w(\bb)$ and $s_{i}s_{i+1}\le w(\bb)$. 
Then $\up\chi_{i,i+1}\in \Phi_-$ if and only if $i\in \supp(w_\up)$.

\item Assume that $s_{i-1}s_i\le w(\bb)$ and $s_{i}s_{i+1}\le w(\bb)$.
Then $\up\chi_{i,i+1}\in \Phi_-$ if and only if $\up(i),\up(i+1)>n-m_0$.

\item Assume that $s_{i}s_{i-1}\le w(\bb)$ and $s_{i+1}s_i\le w(\bb)$. 
Then $\up\chi_{i,i+1}\in \Phi_-$ if and only if $\up(i),\up(i+1)\le n-m_0$.

\item Assume that $s_{i-1}s_i\le w(\bb)$ and $s_{i+1}s_i\le w(\bb)$.
Then $\up\chi_{i,i+1}\in \Phi_-$ if and only if $i\notin \supp(w_\up)$.
\end{enumerate}
\end{coro}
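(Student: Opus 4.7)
The plan is to conjugate each subword relation for $w(\bb)$ given by Corollary \ref{Coxeter2} across $\up$, turning it into an assertion about the $\tau^m$-orbit of $\up(i)$ and $\up(i+1)$, and then to read off the support of $w_\up$ via Lemma \ref{allowed c}. First I would unpack the standing hypothesis $\la\chi_{i,i+1},\up^{-1}\ld_b^-\ra=0$: since $\ld_b^-$ takes only the values $\mn$ and $\mn+1$, with $\mn+1$ occurring precisely at positions $>n-m_0$, this hypothesis says that $\up(i)$ and $\up(i+1)$ are simultaneously in $\{1,\ldots,n-m_0\}$ or in $\{n-m_0+1,\ldots,n\}$. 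The second input is that $\tau=(1\ 2\ \cdots\ n)$, so $\tau^m$ acts by $a\mapsto a+m_0\pmod n$, and in particular $\tau^m(a)<a$ iff $a>n-m_0$. Since $\tau^m=\up w(\bb)\up^{-1}$, every relation $w(\bb)^k(j)=j'$ transports to $(\tau^m)^k\up(j)=\up(j')$.

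Cases (ii) and (iii) are then essentially formal. In (ii), Corollary \ref{Coxeter2}(ii) gives $w(\bb)(i)=i+1$, so $\up(i+1)=\tau^m\up(i)$; hence $\up(i)>\up(i+1)$ iff $\up(i)>n-m_0$, and the hypothesis then forces $\up(i+1)>n-m_0$ as well. Case (iii) is the mirror argument starting from $w(\bb)(i+1)=i$.

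For the longer-orbit cases (i) and (iv), I would apply $\up$ to the identity of Corollary \ref{Coxeter2}(i), obtaining
\[
\{\up(1),\ldots,\up(i+1)\}=\{\up(i),\tau^m\up(i),\ldots,(\tau^m)^i\up(i)\},\qquad (\tau^m)^i\up(i)=\up(i+1),
\]
and symmetrically in case (iv) with $\up(i)$ and $\up(i+1)$ interchanged. Let $k$ be the unique index with $i_{k-1}\le i<i_k$, equivalently $i_{k-1}<i+1\le i_k$. Feeding $z=\up(i)$ (resp.\ $z=\up(i+1)$) into Lemma \ref{negative}, the cardinality
\[
c_{i+1}=|\{j\le i+1\mid\up(j)>n-m_0\}|=|\{z,\tau^m z,\ldots,(\tau^m)^iz\}_{>n-m_0}|
\]
is pinned down by whether $z$ is larger or smaller than $(\tau^m)^iz$; subtracting off the contribution of $\up(i+1)$ in the "both large" sub-case of the hypothesis recovers $c_i\in\{k-1,k\}$. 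Finally, Lemma \ref{allowed c} identifies $c_i=k$ with $i\in\supp(w_\up)$. In case (i) this yields $\up\chi_{i,i+1}\in\Phi_-\Leftrightarrow i\in\supp(w_\up)$ directly, while in case (iv) the orbit runs from $\up(i+1)$ to $\up(i)$, which reverses the direction of the comparison and produces the complementary equivalence $\up\chi_{i,i+1}\in\Phi_-\Leftrightarrow i\notin\supp(w_\up)$.

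The main bookkeeping obstacle will be aligning the two sub-cases of the hypothesis with the $\pm1$ shift between $c_i$ and $c_{i+1}$, and checking that the parity flip between cases (i) and (iv) really matches the swap between the two branches of Lemma \ref{negative}. Boundary indices $i=1$ and $i=n-1$ require no separate argument, since the convention $s_0=s_n=1$ used in Corollary \ref{Coxeter2} already absorbs them into the uniform statement.
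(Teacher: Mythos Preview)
Your proposal is correct and follows essentially the same route as the paper: transport the orbit identities of Corollary~\ref{Coxeter2} across $\up$ to $\tau^m$, handle (ii) and (iii) directly from $\tau^m(a)<a\iff a>n-m_0$, and for (i), (iv) feed $z=\up(i)$ (resp.\ $\up(i+1)$) into Lemma~\ref{negative} with $j=i+1$ and read off $i\in\supp(w_\up)$ via Lemma~\ref{allowed c}. The only cosmetic difference is that the paper phrases the final step in terms of $c_{i+1}=|\{\up(1),\ldots,\up(i+1)\}_{>n-m_0}|$ directly, while you make the passage from $c_{i+1}$ to $c_i$ explicit; the content is the same.
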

\begin{proof}
By the definition of $\up$, we have $\up w(\bb)\up^{-1}=\tau^m$.
So for any $j$, there exists $1\le z\le n$ such that $\up(i)=z,\up(w(\bb)(i))=\tau^m(z),\ldots, \up(w(\bb)^{j-1}(i))=\tau^{(j-1)m}(z)$.
The assertions of (ii) and (iii) follow immediately from this and Corollary \ref{Coxeter2}.

It remains to prove (i) and (iv).
We only prove (i), and the proof of (iv) is similar.
Assume that $\up(i),\up(i+1)\le n-m_0$ (resp.\ $\up(i),\up(i+1)>n-m_0$).
Let $k$ such that $i_{k-1}\le i<i_k$.
By Corollary \ref{Coxeter2} (i) and Lemma \ref{negative}, $|\{\up(1),\ldots,\up(i+1)\}_{>n-m_0}|=k$ (resp.\ $k+1$) if and only if $\up(i)>\up(i+1)$.
By Corollary \ref{Coxeter2} (i) and Lemma \ref{allowed c}, $|\{\up(1),\ldots,\up(i+1)\}_{>n-m_0}|=k$ (resp.\ $k+1$) if and only if $i\in \supp(w_\up)$.
This proves (i).
\end{proof}

For $\supp(w_\up)$, we also have the following lemma:
\begin{lemm}
\label{suppw}
Let $\bb\in \B_\mu(\ld_b)$ and $\up\in \Upsilon(\bb)$.
Fix a reduced expression $s_{j_1}s_{j_2}\cdots s_{j_{n-1}}$ of $w(\bb)$.
For $1\le h\le n-1$, $j_h\in \supp(w_\up)$ if and only if $\up s_{j_1}s_{j_2}\cdots s_{j_{h-1}}\chi_{j_h,j_h+1}\in \Phi_-$.
\end{lemm}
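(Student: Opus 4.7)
My plan is to reduce Lemma \ref{suppw} to its base case $h=1$ by cyclically rotating the reduced expression of $w(\bb)$, and then to establish the base case by combining Corollary \ref{wbCoxeter} with a short orbit count.

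\textbf{Reduction.} Set $v_h=s_{j_1}\cdots s_{j_{h-1}}$, $\up'=\up v_h$, and $w'=s_{j_h}s_{j_{h+1}}\cdots s_{j_{n-1}}s_{j_1}\cdots s_{j_{h-1}}$. A direct conjugation computation gives $(\up')^{-1}\tau^m\up'=v_h^{-1}w(\bb)v_h=w'$, a Coxeter element whose reduced expression starts with $s_{j_h}$, and manifestly $\up s_{j_1}\cdots s_{j_{h-1}}\chi_{j_h,j_h+1}=\up'\chi_{j_h,j_h+1}$. It remains to show $j_h\in\supp(w_\up)\Leftrightarrow j_h\in\supp(w_{\up'})$. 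By Lemma \ref{allowed c} each side is equivalent to $|\{p\le j_h:\ld(p)=\mn+1\}|=k$, where $\ld$ is $\up^{-1}\ld_b^-$ or $(\up')^{-1}\ld_b^-$ respectively and $k$ is the index with $i_{k-1}\le j_h<i_k$. Since $\ld_b^-$ is weakly increasing and takes value $\mn+1$ exactly on positions $n-m_0+1,\ldots,n$, these counts become $|\{1,\ldots,j_h\}\cap\up^{-1}(\{n-m_0+1,\ldots,n\})|$ and $|v_h(\{1,\ldots,j_h\})\cap\up^{-1}(\{n-m_0+1,\ldots,n\})|$ respectively. The key combinatorial observation is that $v_h$ preserves $\{1,\ldots,j_h\}$ setwise: every letter $s_{j_l}$ in $v_h$ satisfies $j_l\ne j_h$ (because letters in a Coxeter element are distinct), and such $s_{j_l}$ either swaps two elements of $\{1,\ldots,j_h\}$ or fixes the set pointwise. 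Hence the two counts coincide.

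\textbf{Base case $h=1$.} Write $i=j_1$. Since $s_i$ is the first letter of $w(\bb)$, both $s_is_{i-1}\le w(\bb)$ and $s_is_{i+1}\le w(\bb)$ hold (with $s_0=s_n=1$), so the hypotheses of Corollary \ref{wbCoxeter}(i) are in place. If $\la\chi_{i,i+1},\up^{-1}\ld_b^-\ra=0$, Corollary \ref{wbCoxeter}(i) directly yields the equivalence. Otherwise $\la\chi_{i,i+1},\up^{-1}\ld_b^-\ra=\pm 1$, and the shape of $\ld_b^-$ forces $\up(i)$ and $\up(i+1)$ to lie on opposite sides of $n-m_0$, which pins down the sign of $\up(i)-\up(i+1)$ and thus whether $\up\chi_{i,i+1}\in\Phi_-$. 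To match this with $\supp(w_\up)$ I invoke Corollary \ref{Coxeter2}(i): since $s_i$ is the first letter, $\{i,w(\bb)(i),\ldots,w(\bb)^i(i)\}=\{1,\ldots,i+1\}$ and $w(\bb)^i(i)=i+1$; under $\up$ this becomes the length-$(i+1)$ $\tau^m$-orbit $\{\up(i),\tau^m\up(i),\ldots,\tau^{im}\up(i)=\up(i+1)\}=\{\up(1),\ldots,\up(i+1)\}$. Plugging this orbit into Lemma \ref{negative} (with $z=\up(i)$ and $j=i+1$) computes $|\{\up(1),\ldots,\up(i+1)\}_{>n-m_0}|=k$ in both $\la=\pm 1$ subcases; subtracting the contribution of $\up(i+1)$ (in the set iff $\la=-1$) yields $c_i=k$ when $\la=1$ and $c_i=k-1$ when $\la=-1$. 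Lemma \ref{allowed c} then translates this into $i\in\supp(w_\up)$ when $\la=1$ and $i\notin\supp(w_\up)$ when $\la=-1$, matching the sign of $\up(i)-\up(i+1)$ in each case.

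The main obstacle is the $\la=\pm 1$ subcases of the base step: Corollary \ref{wbCoxeter} does not cover them, so one must carry out the count of $c_i$ by hand using the $\tau^m$-orbit structure guaranteed by Corollary \ref{Coxeter2}(i) together with the case analysis of Lemma \ref{negative}.
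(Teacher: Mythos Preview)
Your proof is correct. The approach differs from the paper's in its organization: the paper handles arbitrary $h$ directly by setting $\underline{j}=v_h(j_h)$ and $\overline{j}=v_h(j_h+1)$, observing via Lemma \ref{Coxeter} that the $w(\bb)$-orbit $\{\underline{j},w(\bb)(\underline{j}),\ldots,w(\bb)^{j_h-1}(\underline{j})\}$ equals $\{1,\ldots,j_h\}$ with $w(\bb)^{j_h}(\underline{j})=\overline{j}$, and then applying Lemma \ref{negative} once with $z=\up(\underline{j})$ and $j=j_h+1$ (splitting only on whether $\up(\overline{j})>n-m_0$). Your route instead cyclically conjugates to reduce to $h=1$; the key insight that $v_h$ preserves $\{1,\ldots,j_h\}$ setwise is morally the same as the paper's orbit identification and yields a clean proof that $c_{j_h}$ is unchanged under the reduction. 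The trade-off is that your base step then splits into three subcases ($\la\chi_{i,i+1},\up^{-1}\ld_b^-\ra\in\{-1,0,1\}$), invoking Corollary \ref{wbCoxeter}(i) for one and redoing the orbit count by hand for the other two, whereas the paper's direct argument treats all $h$ uniformly with a single dichotomy on $\ld(\overline{j})$. Both arguments rest on the same ingredients (Lemmas \ref{Coxeter}, \ref{negative}, \ref{allowed c}); yours is a legitimate and slightly more modular repackaging.

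One minor point worth making explicit: after the reduction you are working with $\up'$ and the Coxeter element $w'=(\up')^{-1}\tau^m\up'$, not with an element of $\Upsilon(\bb)$ and $w(\bb)$. Your base-case argument and the appeal to Corollary \ref{wbCoxeter}(i) still go through because those statements use only that the conjugate is a Coxeter element with $s_i$ as first letter (together with Lemmas \ref{allowed c} and \ref{negative}), but you should say so rather than silently applying results stated for $\up\in\Upsilon(\bb)$.
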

\begin{proof}
Set $\ld=\up^{-1}\ld_b^-$, and let $c_i$ be as above.
Assume $i_{k-1}\le j_h<i_k$.
By Lemma \ref{allowed c}, $j_h\in \supp(w_\up)$ if and only if $c_{j_h}=k$.

Set $\underline{j}=s_{j_1}s_{j_2}\cdots s_{j_{h-1}}(j_h)(\le j_h)$ and $\overline{j}=s_{j_1}s_{j_2}\cdots s_{j_{h-1}}(j_h+1)(\geq j_h+1)$.
Since $w(\bb)^{-1}(\underline{j})\geq j_h+1$ and $w(\bb)^{-1}(\overline{j})\le j_h$, we have
$$\{\underline{j}, w(\bb)(\underline{j}), \ldots, w(\bb)^{j_h-1}(\underline{j})\}=\{1,2,\ldots, j_h\}$$
and $w(\bb)^{j_h}(\underline{j})=\overline{j}$ by Lemma \ref{Coxeter}.
If $\ld(\overline{j})=\ld_b^-(\up(\overline{j}))=\mn$ (resp.\ $\mn+1$), then $|\{\up(1),\up(2),\ldots,\up(j_h)\}_{>n-m_0}|=k$ is equivalent to $|\{\up(1),\up(2),\ldots,\up(j_h),\up(\overline{j})\}_{>n-m_0}|=k$ (resp.\ $k+1$).
Thus by $\up w(\bb)\up^{-1}=\tau^m$ and Lemma \ref{negative} (i) (resp.\ (ii)) for $j=j_h+1$, 
$c_{j_h}=k$ if and only if $\up(\underline{j})>\up(\overline{j})$, i.e., $\up s_{j_1}s_{j_2}\cdots s_{j_{h-1}}\chi_{j_h,j_h+1}\in \Phi_-$.
\end{proof}

\begin{coro}
\label{suppwcoro}
Keep the notation in Lemma \ref{suppw}.
Set $s_0=s_n=1$.
\begin{enumerate}[(i)]
\item Assume that $s_{j_h}s_{j_h-1}\le w(\bb)$ and $s_{j_h}s_{j_h+1}\le w(\bb)$ for fixed $h$.
Assume further that there exists $h<h'$ such that $j_{h'}=j_h-1$ (resp.\ $j_h+1$) and $j_h+1\notin \{j_1,j_2,\ldots, j_{h'-1}\}$ (resp.\ $j_h-1\notin \{j_1,j_2,\ldots, j_{h'-1}\}$).
If $\ld_b^-(\up(j_h))=\mn$ (resp.\ $\ld_b^-(\up(j_h+1))=\mn+1$), then $\up s_{j_1}\cdots s_{j_{h'}}\chi_{j_h,j_h+1}\in \Phi_-$.

\item Assume that $s_{j_h-1}s_{j_h}\le w(\bb)$ and $s_{j_h+1}s_{j_h}\le w(\bb)$ for fixed $h$.
Assume further that there exists $h'<h$ such that $j_{h'}=j_h-1$ (resp.\ $j_h+1$) and $j_h+1\notin \{j_1,j_2,\ldots, j_{h'-1}\}$ (resp.\ $j_h-1\notin \{j_1,j_2,\ldots, j_{h'-1}\}$).
If $\ld_b^-(\up(j_h+1))=\mn$ (resp.\ $\ld_b^-(\up(j_h))=\mn+1$), then $\up s_{j_1}\cdots s_{j_{h'}}\chi_{j_h,j_h+1}\in \Phi_-$.

\item Assume that $s_{j_h}s_{j_h-1}\le w(\bb)$ and $s_{j_h}s_{j_h+1}\le w(\bb)$ for fixed $h$.
If $\la \chi_{j_h,j_h+1}, \up^{-1}\ld_b^-\ra=-1$, then $j_h\notin \supp(w_\up)$.

\item Assume that $s_{j_h-1}s_{j_h}\le w(\bb)$ and $s_{j_h+1}s_{j_h}\le w(\bb)$ for fixed $h$.
If $\la \chi_{j_h,j_h+1}, \up^{-1}\ld_b^-\ra=1$, then $j_h\in \supp(w_\up)$.
\end{enumerate}
\end{coro}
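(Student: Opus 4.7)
The plan leverages Lemma \ref{suppw} as the main engine: it reformulates ``$j_h \in \supp(w_\up)$'' as a sign condition on the root $\up s_{j_1}\cdots s_{j_{h-1}}\chi_{j_h,j_h+1}$. Parts (iii) and (iv) will apply Lemma \ref{suppw} directly under the extra hypothesis on $\la\chi_{j_h,j_h+1},\up^{-1}\ld_b^-\ra$, whereas parts (i) and (ii) require extending the computation by the additional block of reflections up to position $h'$.

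For (iii) and (iv), I would reuse the analysis carried out in the proof of Lemma \ref{suppw}: setting $\underline{j}:=s_{j_1}\cdots s_{j_{h-1}}(j_h)$ and $\overline{j}:=s_{j_1}\cdots s_{j_{h-1}}(j_h+1)$, the cycle of $w(\bb)$ through $\{1,\ldots,j_h+1\}$ is given explicitly by Corollary \ref{Coxeter2}(i) in case (iii) and by Corollary \ref{Coxeter2}(iv) in case (iv). Combining this with Lemma \ref{negative} translates the hypothesis $\la\chi_{j_h,j_h+1},\up^{-1}\ld_b^-\ra=\pm 1$ (equivalently the values of $\ld_b^-$ at $\up(\underline{j})$ and $\up(\overline{j})$) into the comparison between $\up(\underline{j})$ and $\up(\overline{j})$. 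Lemma \ref{suppw} then yields the desired conclusion on $\supp(w_\up)$.

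For (i) and (ii), I would track the action of the block $s_{j_h}s_{j_{h+1}}\cdots s_{j_{h'}}$ (applied to the root right-to-left) on $\chi_{j_h,j_h+1}$. Consider the first clause of (i), where $j_{h'}=j_h-1$ and the index $j_h+1$ does not occur among $j_1,\ldots,j_{h'-1}$. Then the innermost reflection $s_{j_h}$ sends $\chi_{j_h,j_h+1}$ to $-\chi_{j_h,j_h+1}$, and successive intermediate reflections, whose indices avoid $j_h+1$, leave the $(j_h+1)$-coordinate untouched, so after applying $s_{j_{h'}}=s_{j_h-1}$ the outcome is a root of the form $\pm\chi_{a,j_h+1}$ for an explicit $a$ obtained by tracing which cycle of $s_{j_h}s_{j_{h+1}}\cdots s_{j_{h'-1}}$ contains $j_h-1$. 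The sign of $\up$ applied to this root is controlled by $\ld_b^-(\up(a))$ versus $\ld_b^-(\up(j_h+1))$; using that $\up^{-1}\ld_b^-$ is allowed (Lemma \ref{allowed}) together with Lemma \ref{allowed c} and the hypothesis $\ld_b^-(\up(j_h))=\mn$, this comparison is forced into $\Phi_-$. The second clause of (i), and both clauses of (ii), are handled by the symmetric index manipulation, swapping ``left'' and ``right'' and $\underline{j}\leftrightarrow\overline{j}$.

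The main obstacle I expect is the combinatorial bookkeeping in (i) and (ii): correctly identifying the intermediate root $\chi_{a,j_h+1}$ and reading off its sign from the allowedness of $\up^{-1}\ld_b^-$, without sign or off-by-one errors. Organising the proof uniformly by encoding the local structure of $w(\bb)$ around $j_h,j_h\pm 1$ via Corollary \ref{Coxeter2}, and translating between $\ld_b^-$-values and $\supp(w_\up)$ via Lemmas \ref{allowed c} and \ref{negative}, should keep the four cases cleanly parallel and reduce the argument to a short verification in each case.
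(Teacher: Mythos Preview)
Your plan for (iii) and (iv) is essentially what the paper does: once one observes (via the hypotheses $s_{j_h}s_{j_h\pm 1}\le w(\bb)$, resp.\ $s_{j_h\pm 1}s_{j_h}\le w(\bb)$) that $\underline{j},\overline{j}$ are either $j_h,j_h+1$ themselves or $w(\bb)(j_h+1),w(\bb)(j_h)$, Lemma~\ref{suppw} reduces everything to a sign check that the pairing hypothesis settles immediately (in (iv) after rewriting via $\up w(\bb)=\tau^m\up$). Invoking Lemma~\ref{negative} here is harmless but unnecessary.

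For (i) and (ii) there is a real gap. First, the explicit root you aim for is miscomputed: in the first clause of (i), applying $s_{j_1}\cdots s_{j_{h'}}$ (rightmost first) to $\chi_{j_h,j_h+1}$ sends the second index to $j_h$, not $j_h+1$, because $s_{j_h}$ is the \emph{last} reflection in the block to touch $\{j_h,j_h+1\}$ and it sends $j_h+1\mapsto j_h$; the result is $\chi_{a,j_h}$ with $a=s_{j_1}\cdots s_{j_{h'}}(j_h)$. More importantly, the tools you propose (allowedness of $\up^{-1}\ld_b^-$, Lemma~\ref{allowed c}) do not by themselves pin down the order of $\up(a)$ and $\up(j_h)$: knowing $\ld_b^-$-values only tells you which side of $n-m_0$ each lies on, and the count conditions $c_i$ control membership in $\supp(w_\up)$, not the comparison you need. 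What is missing is the identification $a=w(\bb)(j_h)$ (since $s_{j_{h'+1}}\cdots s_{j_{n-1}}$ fixes $j_h$), which via $\up w(\bb)=\tau^m\up$ gives $\up(a)=\tau^m\up(j_h)$; the hypothesis $\up(j_h)\le n-m_0$ then forces $\up(a)=\up(j_h)+m_0>\up(j_h)$ directly. This is exactly the mechanism the paper uses: rather than tracking the partial product, it introduces an auxiliary index $j$ so that $s_{j_1}\cdots s_{j_{h'}}\chi_{j_h,j_h+1}=w(\bb)\chi_{j_h,j}$ and then applies $\up w(\bb)=\tau^m\up$. Without that relation your sign determination does not close, so build it in explicitly.
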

\begin{proof}
Keep the notation in the proof of Lemma \ref{suppw}.

Set $j=s_{j_{n-1}}\cdots s_{j_{h'+1}}(j_{h}+1)$ (resp.\ $s_{j_{n-1}}\cdots s_{j_{h'+1}}(j_{h})$).
Then, by $\up w(\bb)=\tau^m \up$, $\up s_{j_1}\cdots s_{j_{h'}}\chi_{j_h,j_h+1}=\up w(\bb)\chi_{j_h, j}=\tau^m \up\chi_{j_h, j}$ (resp.\ $\tau^m \up\chi_{j, j_h+1}$) and $w(\bb)(j)=j_h$ (resp.\ $w(\bb)(j)=j_h+1$).
Moreover, if $\ld_b^-(\up(j_h))=\mn$ (resp.\ $\ld_b^-(\up(j_h+1))=\mn+1$), i.e., $\up(j_h)\le n-m_0$ (resp.\  $\up(j_h+1)>n-m_0$), then again by $\up w(\bb)\up^{-1}=\tau^m$, $\up(j)\le n-m_0$ (resp.\ $\up(j)>n-m_0$) implies $\up(j)<\up(j_h)$ (resp.\ $\up(j)>\up(j_{h}+1)$).
Combining these facts, we deduce $\up s_{j_1}\cdots s_{j_{h'}}\chi_{j_h,j_h+1}\in \Phi_-$.
The proof of (i) is finished.

Recall that we have $\{\underline{j}, w(\bb)(\underline{j}), \ldots, w(\bb)^{j_h-1}(\underline{j})\}=\{1,2,\ldots, j_h\}$
and $w(\bb)^{j_h}(\underline{j})=\overline{j}$.
Combining this with Corollary \ref{Coxeter2} (iv), we can easily check that $w(\bb)(j_h+1)=\underline{j}$ and $w(\bb)(j_h)=\overline{j}$.
Thus, by $\up w(\bb)\up^{-1}=\tau^m$, if $\ld_b^-(\up(j_h+1))=\mn$ (resp.\ $\ld_b^-(\up(j_h))=\mn+1$), i.e., $\up(j_h+1)\le n-m_0$ (resp.\  $\up(j_h)>n-m_0$), we have $\up(\underline{j})>\up(j_h+1)$ (resp.\ $\up(j_h)>\up(\overline{j})$).
By our assumption on $h'$, this is equivalent to $\up s_{j_1}\cdots s_{j_{h'}}\chi_{j_h,j_h+1}\in \Phi_-$.
The proof of (ii) is finished.

For (iii), by $s_{j_h}s_{j_h-1}\le w(\bb), s_{j_h}s_{j_h+1}\le w(\bb)$ and Lemma \ref{suppw}, $j_h\in \supp(w_\up)$ if and only if $\up(j_h)>\up(j_h+1)$.
Further, $\la \chi_{j_h,j_h+1}, \up^{-1}\ld_b^-\ra=-1$ implies $\up(j_h)\le n-m_0<\up(j_h+1)$.
Thus $j_h\notin \supp(w_\up)$.
The proof of (iii) is finished.

For (iv), by $s_{j_h-1}s_{j_h}\le w(\bb)$ and $s_{j_h+1}s_{j_h}\le w(\bb)$, we have $\underline{j}=w(\bb)(j_h+1)$ and $\overline{j}=w(\bb)(j_h)$.
So by Lemma \ref{suppw}, $j_h\in \supp(w_\up)$ if and only if $\up w(\bb)\chi_{j_h+1,j_h}\in \Phi_-$.
By $\up w(\bb)=\tau^m \up$, this is equivalent to saying  $\tau^m\up\chi_{j_h+1,j_h}\in \Phi_-$.
This holds if $\la \chi_{j_h,j_h+1}, \up^{-1}\ld_b^-\ra=1$, i.e., $\up(j_h+1)\le n-m_0<\up(j_h)$.
Thus $j_h\in \supp(w_\up)$.
The proof of (iv) is finished.
\end{proof}

\subsection{Computation of Kashiwara Operators}
\label{computation}
As explained in \S \ref{construction}, we can compute $\bb^{\op}$ from $\bb$ using each simple reflection exactly once.
Consider $u_i(\bb)$ defined in Theorem \ref{ef}.
In this computation, the action of $s_i$ changes some $-$ to $+$,
and the action of $s_{i-1}$ (resp.\ $s_{i+1}$) deletes $+$ (resp.\ adds $-$).
Other simple reflections do not affect $u_i(\bb)$ (and hence $\ve_i(\bb)$).
Let $\bb\in \B_\mu(\ld_b)$ and $\up\in \Upsilon(\bb)$.
Since $\up^{-1}\ld_b^-$ is allowed, we can use a part of this computation to obtain $\up^{-1}\bb^-$ from $\bb$ by $w_\up$.
Let $\ld$ be an allowed conjugate of $\ld_b$, and let $\bb'$ be the conjugate of $\bb$ with weight $\ld$.
Let $w$ be the partial Coxeter element associated to $\ld$.
Assume $\supp(w)\subseteq\supp(w_\up)$.
Then $\ld$ is a weight appearing in the computation of $\up^{-1}\bb^-$ from $\bb$.
If $i\in\supp(w_\up)\setminus \supp(w)$ and $\la \chi_{i,i+1},\ld\ra=-1$, then $\ve_i(s_i\bb')=\ve_i(\te_i\bb')=\ve_i(\bb')-1$ (and hence $\phi_i(s_i\bb')=\phi_i(\bb')+1$ by Definition \ref{crystaldefi} (iii)).
For the action of $s_{i-1}$ or $s_{i+1}$, we have the following lemma.
\begin{lemm}
\label{wbve}
Let $\ld,\bb'$ and $w$ be as above.
Assume that $\supp(w_\up)\setminus \supp(w)$ contains $i-1$ (resp.\ $i+1$) and $\la \chi_{i-1,i},\ld\ra=-1$ (resp.\ $\la \chi_{i+1, i+2}, \ld\ra=-1$).
If $s_is_{i-1}\le w(\bb)$ (resp.\ $s_is_{i+1}\le w(\bb)$), then $\ve_i(s_{i-1}\bb')=\ve_i(\bb')$ (resp.\ $\ve_i(s_{i+1}\bb')=\ve_i(\bb')$).
Moreover, the converse holds if $(\ld_b(i),\ld_b(i+1))\neq(\mn+1,\mn)$.
\end{lemm}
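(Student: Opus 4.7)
The plan is as follows. The two halves of the lemma (about $s_{i-1}$ and about $s_{i+1}$) are mirror images of each other, so I would write out the $s_{i-1}$ case in detail and then invoke the symmetry to deduce the $s_{i+1}$ case (swapping the roles of ``earlier'' and ``later'' in the Far-Eastern reading, and of $\fbox{$i$}$ and $\fbox{$i+1$}$).

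First, translate the left-hand side into a concrete operation on $\bb'$. Since $\wt(\bb') = \ld$ and $\la \chi_{i-1,i},\ld\ra = -1$, the Weyl group action gives $s_{i-1}\bb' = \te_{i-1}\bb'$. By Theorem \ref{ef}, $\te_{i-1}$ replaces a single $\fbox{$i$}$ of $\bb'$ by $\fbox{$i-1$}$, namely the one at the position $p$ in the Far-Eastern reading corresponding to the rightmost $-$ of $u_{i-1}(\bb')_{\red}$; call this box $B$, and write $j_0$ for the column containing $B$ in the decomposition $\bb' = \bb'_1\otimes\cdots\otimes\bb'_d$. The effect on $u_i$ is precisely to delete the $+$ at position $p$. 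A brief bracket-matching argument (scan left to right, $+$'s push and $-$'s pop) shows that $\ve_i$ stays unchanged iff this $+$ was unmatched in $u_i(\bb')$, and otherwise increases by exactly one. The lemma is therefore equivalent to controlling the matched/unmatched status of $B$ in $u_i(\bb')$.

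The crux is to show: under $s_is_{i-1}\le w(\bb)$, $B$ is unmatched, and under the opposite Bruhat relation $s_{i-1}s_i\le w(\bb)$ together with $(\ld_b(i),\ld_b(i+1))\ne(\mn+1,\mn)$, $B$ is matched. To do this I would use the factorization $w(\bb) = w_1^{-1}\cdots w_d^{-1}$ of Lemma \ref{w}. The condition $s_is_{i-1}\le w(\bb)$ splits into two configurations: either $s_i\in w_{j_i}$ and $s_{i-1}\in w_{j_{i-1}}$ with $j_i<j_{i-1}$, or $s_i$ and $s_{i-1}$ both lie in a single run of the same $w_j$. In each configuration, Corollary \ref{wbCoxeter}, Lemma \ref{suppw} and Corollary \ref{suppwcoro}, applied through the inclusion $\supp(w)\subseteq\supp(w_\up)$, determine exactly which $\fbox{$i+1$}$ boxes of $\bb$ have been converted to $\fbox{$i$}$ by the partial-Coxeter action producing $\bb'$. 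A column-by-column check then shows that no surviving $\fbox{$i+1$}$ sits at a Far-Eastern position later than $p$ and is available to pair with $B$; hence $B$ is unmatched in $u_i(\bb')$.

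For the converse, if $s_{i-1}s_i\le w(\bb)$, the opposite configuration produces an explicit surviving $\fbox{$i+1$}$ at a position later than $p$, which brackets with $B$ and forces $\ve_i$ to strictly increase. The exclusion of $(\ld_b(i),\ld_b(i+1))=(\mn+1,\mn)$ is used exactly to guarantee that such a partner actually survives: in the excluded configuration, the relevant simple reflection need not belong to $w_\up$, so no matching $\fbox{$i+1$}$ need appear at a position later than $p$, and the implication can genuinely fail. The main obstacle is the column-by-column verification in the preceding paragraph --- translating the Bruhat-order condition into a concrete combinatorial statement about which $\fbox{$i+1$}$ boxes of $\bb'$ can sit later than $p$ in the Far-Eastern order. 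I expect this to break cleanly into the two configurations above by repeated appeal to Corollary \ref{wbCoxeter}, Lemma \ref{suppw}, and Corollary \ref{suppwcoro}, following the template already established for the analogous column-by-column arguments in \S\ref{Allowed Cocharacters}.
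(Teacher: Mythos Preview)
Your reduction to the matched/unmatched status of the box $B$ in $u_i(\bb')$ is correct and is also how the paper begins. But the mechanism you propose for deciding that status is the wrong tool. Corollary \ref{wbCoxeter}, Lemma \ref{suppw} and Corollary \ref{suppwcoro} are statements about whether $\up\chi_{i,i+1}\in\Phi_-$ or whether $j_h\in\supp(w_\up)$; they carry no information about which positions in the $i$-signature are matched. Knowing $\supp(w)\subseteq\supp(w_\up)$ does not by itself determine which $\fbox{$i+1$}$ boxes survive in $\bb'$ nor where they sit relative to $B$ --- that is governed by the signature rule (Theorem \ref{ef}), and the \S\ref{Allowed Cocharacters} results do not encode signature positions. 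Your ``column-by-column check'' therefore has no content as stated.

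What is actually needed, and what the paper does, is a direct signature argument organized around a single reference position $\ell_i$: the place in $u_i$ at which $s_i$ acts when one continues the computation of $\bb^{\op}$. The key translation is
\[
s_is_{i-1}\le w(\bb)\ \Longleftrightarrow\ l_i\le l_{i-1}\ \text{(resp.\ $<$)}\ \Longleftrightarrow\ \ell_i\le \ell_{i-1}\ \text{(resp.\ $<$)},
\]
column index to signature position, with the $\le$/$<$ dichotomy depending on whether $\ld_b(i)=\mn$ or $\mn+1$. When $\ld_b(i)=\mn$ one has $i\in\supp(w)$, so $u_i(\bb')$ already contains the $+$ at $\ell_i$; since $u_i(\bb')_{\red}$ is $-\cdots-$ to the left of $\ell_i$ and $+\cdots+$ from $\ell_i$ on, locating $\ell_{i-1}$ relative to $\ell_i$ settles everything. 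One sub-case (when $i+1\in\supp(w)$ and $\ld(i+1)=\mn+1$) does use Corollary \ref{suppwcoro}(iii), but only to exclude a spurious surviving $-$ to the right of $\ell_i$, not as the engine of the argument. When $\ld_b(i)=\mn+1$, $i$ need not lie in $\supp(w)$ and there is no $\ell_i$ inside $u_i(\bb')$; the paper passes to the auxiliary element $\bb'_0=s_{i+1}\cdots s_{j-1}\bb'$ to insert a reference $-$ and then applies Lemma \ref{right}. Your plan has no substitute for this reference-position idea in either case, and that is the missing step.
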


Note that $\ve_i(s_{i-1}\bb'),\ve_i(s_{i+1}\bb')\in\{\ve_i(\bb'),\ve_i(\bb')+1\}$ (and $\phi_i(s_{i-1}\bb'),\phi_i(s_{i+1}\bb')\in\{\phi_i(\bb')-1,\phi_i(\bb')\}$) in any case.
Roughly speaking, this says that $w(\bb)$ determines $\ve_i(s_{i-1}\bb')$ or $\ve_i(s_{i+1}\bb')$.
Before beginning the proof, let us illustrate why this lemma holds by an example for the case $\ld=\ld_b$.

\begin{exam}
\label{wbve example}
Assume that $\mn=7$ and $(\ld_b(i-1),\ld_b(i),\ld_b(i+1))=(7,8,8)$.
We can easily find $\mu$ and $\bb\in \B_\mu(\ld_b)$ such that $$u_i(\bb)=-++--+--/++-+++--,\quad u_i(\bb)_{\red}=--++.$$
Here $(-++--+--)_{\red}=--$ and $(++-+++--)_{\red}=++$.
If the action of $s_{i-1}$ deletes $+$ on the left (resp.\ right) of /, then $\ve_i(s_{i-1}\bb)=\ve_i(\bb)+1=3$ (resp.\ $\ve_i(s_{i-1}\bb)=\ve_i(\bb)=2$).
Let $u=-$ be the rightmost $-$ to $+$ in $u_i(\bb)_{\red}$, or equivalently, the unique $-$ in $u_i(\bb)$ adjacent to $/$.
Note that if we apply $s_i$ on $s_{i-1}\bb$, then $u$ changes to $+$.
So the action of $s_{i-1}$ deletes $+$ on the left (resp.\ right) of / if and only if $s_{i-1}s_i\le w(\bb)$ (resp.\ $s_is_{i-1}\le w(\bb)$).

We next consider $\bb\in \B_\mu(\ld_b)$ such that $$u_i(\bb)=-++--+--/+-++++--,\quad u_i(\bb)_{\red}=--++.$$
In this case, $\ve_i(s_{i-1}\bb)=\ve_i(\bb)+1$ if and only if $s_{i-1}$ deletes $+$ on the left of $-$, or the unique $+$ adjacent to /.
Nevertheless, the equivalence $\ve_i(s_{i-1}\bb)=\ve_i(\bb)+1\Leftrightarrow s_{i-1}s_i\le w(\bb)$ (and hence $\ve_i(s_{i-1}\bb)=\ve_i(\bb)\Leftrightarrow s_is_{i-1}\le w(\bb)$) still holds.
Indeed, if $s_{i-1}$ deletes the unique $+$ adjacent to /, then the action of $s_i$ on $s_{i-1}\bb$ changes $-$ next to this $+$.
So we still have $s_{i-1}s_i\le w(\bb)$, which implies the equivalence.

Assume that $\mn=7$ and $(\ld_b(i-1),\ld_b(i),\ld_b(i+1),\ld_b(i+2))=(7,8,7,8)$.
Consider $\mu$ and $\bb\in \B_\mu(\ld_b)$ such that 
$$u_i(\bb)=-+\hat{+}-+-/+++--++--.$$
We also assume that the action of $s_{i-1}$ deletes $\hat{+}$ and the action of $s_{i+1}$ adds $-$ to the place where $/$ exists.
Then $\ve_i(s_{i-1}\bb)=\ve_i(s_{i+1}\bb)=\ve_i(\bb)=1$.
On the other hand, $\ve_i(s_{i-1}s_{i+1}\bb)=\ve_i(s_{i+1}s_{i-1}\bb)=\ve_i(\bb)+1=2$, $s_{i-1}s_i\le w(\bb)$ and $s_{i+1}s_i\le w(\bb)$.
The difference from the above example is that we apply both $s_{i-1}$ and $s_{i+1}$ on $\bb$ before applying $s_i$ to compute $\bb^{\op}$.
In other words, $s_is_{i-1}\le w_{\max}'$ and $s_is_{i+1}\le w_{\max}'$.
\end{exam}

We generalize the observation in Example \ref{wbve example} as follows:

\begin{lemm}
\label{right}
Let $\ld$ be a conjugate of $\ld_b$.
Let $\bb'$ be the conjugate of $\bb$ with weight $\ld$.
Assume that $\la \chi_{i-1,i},\ld\ra=-1$ (resp.\ $\la \chi_{i+1, i+2}, \ld\ra=-1$).
\begin{enumerate}[(i)]
\item We write $u_i(\bb')=u^1\cdots u^{\ld(i)+\ld(i+1)}$ (resp.\ $u_i(s_{i+1}\bb')=u^1\cdots u^{\ld(i)+\ld(i+1)+1}$).
Let $u^{\ell_{i-1}}=+$ (resp.\ $u^{\ell_{i+1}}=-$) be the box which vanishes in $u_i(s_{i-1}\bb')$ (resp.\ $u_i(\bb')$).
If $\ve_i(s_{i-1}\bb')=\ve_i(\bb')+1$ (resp.\ $\ve_i(s_{i+1}\bb')=\ve_i(\bb')+1$), then there exists $u^\ell=-$ with $\ell_{i-1}<\ell$ (resp.\ $\ell_{i+1}\le\ell$) which remains in $u_i(s_{i-1}\bb')_{\red}$ (resp.\ $u_i(s_{i+1}\bb')_{\red}$).

\item Assume that $\ve_i(\bb')>0$.
Let $u$ be the rightmost $-$ to $+$ in $u_i(\bb')_{\red}$,
and let $u'$ be the rightmost $-$ to $+$ in $u_i(s_{i-1}\bb')_{\red}$ (resp.\ $u_i(s_{i+1}\bb')_{\red}$).
If $\ve_i(s_{i-1}\bb')=\ve_i(\bb')+1$ (resp.\ $\ve_i(s_{i+1}\bb')=\ve_i(\bb')+1$), then $u=u'$ or $u'$ is on the right side of $u$.
If $\ve_i(s_{i-1}\bb')=\ve_i(\bb')$ (resp.\ $\ve_i(s_{i+1}\bb')=\ve_i(\bb')$), then $u=u'$.
\end{enumerate}
\end{lemm}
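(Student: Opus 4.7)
The plan is to translate the entire lemma into a comparison of running-balance functions. For $u_i(\bb')=u^1\cdots u^L$, set
$$g(j)=|\{k\le j\mid u^k=-\}|-|\{k\le j\mid u^k=+\}|,\quad g(0)=0.$$
Under the greedy parenthesis matching that implements the reduction ($+$ as an open bracket, $-$ as a close bracket), the unmatched $-$'s are exactly the positions where $g$ strictly exceeds all earlier values, so $\ve_i(\bb')=\max_j g(j)$; in particular, when $\ve_i(\bb')>0$, the rightmost unmatched $-$ (the symbol $u$ in part (ii)) sits at the smallest $p$ with $g(p)=\ve_i(\bb')$. Deletion of the $+$ at old position $\ell_{i-1}$ yields a modified balance $g_{\mathrm{new}}$ with $g_{\mathrm{new}}(j)=g(j)$ for $j<\ell_{i-1}$ and $g_{\mathrm{new}}(j)=g(j+1)+1$ for $j\ge\ell_{i-1}$; insertion of a $-$ at new position $\ell_{i+1}$ yields $g_{\mathrm{new}}(j)=g(j)$ for $j<\ell_{i+1}$ and $g_{\mathrm{new}}(j)=g(j-1)+1$ for $j\ge\ell_{i+1}$.

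For part (i) in the $s_{i-1}$ case, these formulas give $\ve_i(s_{i-1}\bb')=\ve_i(\bb')+1$ iff $g$ attains $\ve_i(\bb')$ at some $j>\ell_{i-1}$; letting $j_0$ be the smallest such $j$, the step $g(j_0)-g(j_0-1)=1$ forces $u^{j_0}=-$, and a direct check shows that $g_{\mathrm{new}}$ first realizes its new maximum $\ve_i(\bb')+1$ at new position $j_0-1$, so the box $u^{j_0}$ is unmatched in $u_i(s_{i-1}\bb')_{\red}$ and $\ell=j_0>\ell_{i-1}$ works. The $s_{i+1}$ case is exactly parallel: $\ve_i(s_{i+1}\bb')=\ve_i(\bb')+1$ iff some $j_0\ge\ell_{i+1}-1$ satisfies $g(j_0)=\ve_i(\bb')$, and the smallest such $j_0$ produces an unmatched $-$ at new position $j_0+1\ge\ell_{i+1}$, which is the inserted $-$ when $j_0=\ell_{i+1}-1$ and the pre-existing $u^{j_0}$ otherwise.

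For part (ii), use the characterization of $u$ and $u'$ as the first positions where $g$ and $g_{\mathrm{new}}$ respectively attain their global maxima, so the comparison becomes a direct computation from the formulas for $g_{\mathrm{new}}$. In the $s_{i-1}$ case, if $\ve_i$ is unchanged then $\max g$ is realized only on $[0,\ell_{i-1})$ where $g_{\mathrm{new}}=g$, forcing $u=u'$; if $\ve_i$ increases, splitting on whether the old record position $p$ satisfies $p<\ell_{i-1}$ or $p>\ell_{i-1}$ shows that $u'$ is either a strictly later $-$ (the new record position lies past $\ell_{i-1}-1$) or the same physical box as $u$ (at new position $p-1$). The $s_{i+1}$ case is treated analogously, with the one subtlety that when $p=\ell_{i+1}-1$ it is the inserted $-$ at new position $\ell_{i+1}$ that realizes $u'$.

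The main obstacle is the bookkeeping between the old indexing on $u_i(\bb')$ and the shifted indexing on $u_i(s_{i\pm 1}\bb')$, together with the fact that in the $s_{i+1}$ case cascading re-matchings can promote a previously matched $-$ to unmatched status or replace it entirely by the inserted box. Phrasing everything through the balance function $g$ removes this difficulty, since both parts then reduce to elementary comparisons of $\max g$ with $\max g_{\mathrm{new}}$ and of the smallest positions where these maxima are attained.
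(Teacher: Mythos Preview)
Your proof is correct. Both your argument and the paper's rest on the same underlying fact---that deleting a $+$ (resp.\ inserting a $-$) can only promote matched $-$'s to unmatched status, never the reverse---but you make this explicit through the running balance $g(j)$, whereas the paper argues it qualitatively. Concretely, the paper observes for (i) that any $-$ to the left of $\ell_{i-1}$ has the same matching status before and after, so the ``new'' unmatched $-$ must lie to the right; and for (ii) it notes that $u_i(s_{i-1}\bb')_{\red}$ is obtained from $u_i(\bb')_{\red}$ by either adding one $-$ or deleting one $+$, from which the claim about the rightmost $-$ follows once one checks that the old unmatched $-$'s remain unmatched. Your formulation via $g$ and $g_{\mathrm{new}}$ trades that monotonicity observation for a direct computation of the first position where the maximum is attained; this is slightly longer but has the advantage of pinning down $u'$ exactly (e.g.\ identifying it with the inserted box when $p=\ell_{i+1}-1$), which the paper's argument does not do and does not need. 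Either route is entirely adequate here.
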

\begin{proof}
We only prove the case for $i-1$.
The case for $i+1$ follows in a similar way.

If $\ve_i(s_{i-1}\bb')=\ve_i(\bb')+1$, then there exists $u^\ell=-$ which remains in $u_i(s_{i-1}\bb')_{\red}$ but does not remain in $u_i(\bb')_{\red}$.
Note that $u^\ell=-$ with $\ell<\ell_{i-1}$ remains in $u_i(s_{i-1}\bb')_{\red}$ if and only if it remains in $u_i(\bb')_{\red}$.
So we must have $\ell_{i-1}<\ell$.
This proves (i).
Note that if $\ve_i(s_{i-1}\bb')=\ve_i(\bb')+1$ (resp.\ $\ve_i(s_{i-1}\bb')=\ve_i(\bb')$), $u(s_{i-1}\bb')_{\red}$ is obtained from $u_i(\bb')_{\red}$ by adding one $-$ (resp.\ deleting one $+$).
The statement of (ii) follows immediately from this.
\end{proof}

For $1\le i\le n-1$, let $l_i$ be a positive integer such that $i\in \supp(w_{l_i})$.
\begin{proof}[Proof of Lemma \ref{wbve}]
We only prove the case for $i-1$.
The case for $i+1$ follows in a similar way.

First assume that $\ld_b(i)=\mn$.
Then $s_is_{i-1}\le w(\bb)$ if and only if $l_i\le l_{i-1}$.
Since $\la \chi_{i-1,i}, \ld\ra=-1$ and $\ld_b(i)=\mn$, we have $i\in \supp(w)$.
We write $u_i(\bb')=u^1\cdots u^{\ld(i)+\ld(i+1)}$.
Let $u^{\ell_i}=+$ be the unique $+$ which does not exist in $u_i(\bb)$ (i.e., the box added by the action of $s_i$),
and let $u^{\ell_{i-1}}=+$ be the box which vanishes in $u_i(s_{i-1}\bb)$.
Then $l_i\le l_{i-1}$ if and only if $\ell_i\le \ell_{i-1}$.
Note that $(u^1\cdots u^{\ell_i-1})_{\red}=-\cdots -$ (resp.\ $(u^{\ell_i}\cdots u^{\ld(i)+\ld(i+1)})_{\red}=+\cdots +$) unless no $-$ (resp.\ $+$) remains.
So if $\ell_{i-1}<\ell_i$, then $\ve_i(s_{i-1}\bb')=\ve_i(\bb')+1$.
This proves the second statement.

For the first statement, we need to show that if $\ell_i\le \ell_{i-1}$, then $\ve_i(s_{i-1}\bb')=\ve_i(\bb')$.
To show this, we first check that $(u^{\ell_i+1}\cdots u^{\ld(i)+\ld(i+1)})_{\red}=+\cdots +$ unless no $+$ remains.
This claim is obviously true when $i+1\notin \supp(w)$ or $s_is_{i+1}\le w$.
If $\ld(i+1)=\mn$ and $\ld_b(i+1)=\mn$ (resp.\ $\mn+1$), then $s_is_{i+1}\le w$ (resp.\ $i+1\notin \supp(w)$), and hence the claim holds.
If $\ld(i+1)=\mn+1$, then by $i\in \supp(w)$, we must have $\ld_b(i+1)=\mn+1$ and $i+1\in \supp(w)$.
By our assumption $\supp(w)\subset \supp(w_\up)$, we also have $\ld_b^-(\up(i))=\mn,\ld_b^-(\up(i+1))=\ld(i+1)=\mn+1$.
If moreover, $-$ remains in $(u^{\ell_i+1}\cdots u^{\ld(i)+\ld(i+1)})_{\red}$, then we must have $\ell_i<\ell_{i+1}$, where $u^{\ell_{i+1}}=-$ be the box added by the action of $s_{i+1}$.
Clearly, $\ell_i<\ell_{i+1}$ implies $s_is_{i+1}\le w(\bb)$.
By Corollary \ref{suppwcoro} (iii), this and $\ell_i\le\ell_{i-1}(\Leftrightarrow s_is_{i-1}\le w(\bb))$ imply $i\notin \supp(w_\up)$.
This contradicts to $i\in \supp(w)\subset \supp(w_\up)$, which shows our claim.
If $\ell_i\le \ell_{i-1}$, then by our claim, at most one $-$ remains after we delete $u^{\ell_{i-1}}$ and then $``+-"$ in $u^{\ell_i+1}\cdots u^{\ld(i)+\ld(i+1)}$ as far as we can.
This $-$ does not contribute to $\ve_i(s_{i-1}\bb')$ because $u^{\ell_i}=+$.
Thus we have $\ve_i(s_{i-1}\bb')=\ve_i(\bb')$.

Next Assume that $\ld_b(i)=\mn+1$.
Then $s_is_{i-1}\le w(\bb)$ if and only if $l_i<l_{i-1}$.
Let $j=\min\{j'\mid i+1\le j'\le n, \ld(j')=\mn+1\}$.
Set $\bb'_0=s_{i+1}s_{i+2}\cdots s_{j-1}\bb'$.
We write $u_i(\bb'_0)=u^1\cdots u^{2\mn+2}$.
Then this is obtained from $u_i(\bb')$ by adding one $-$.
Let $u^{\ell_{i-1}}=+$ be the box which vanishes in $u_i(s_{i-1}\bb'_0)$, and let $u^{\ell_i}=-$ be the box which vanishes in $u_i(s_is_{i-1}\bb'_0)$.
Then $l_i<l_{i-1}$ if and only if $\ell_i<\ell_{i-1}$.
Note that $s_{i-1}\bb'_0=s_{i+1}s_{i+2}\cdots s_{j-1}(s_{i-1}\bb')$, and $u^{\ell_{i-1}}$ (regarded as in $u_i(\bb')$) also vanishes in $u_i(s_{i-1}\bb')$. 
So, by Lemma \ref{right} (i), if $\ve_i(s_{i-1}\bb')=\ve_i(\bb')+1$, then there exists $u^\ell=-$ with $\ell_{i-1}<\ell$ which remains in $u_i(s_{i-1}\bb'_0)_{\red}$.
Then $\ell_{i-1}<\ell\le \ell_i$, i.e., $s_{i-1}s_i\le w(\bb)$.
Thus if $s_is_{i-1}\le w(\bb)$, then $\ve_i(s_{i-1}\bb')=\ve_i(\bb')$.
The first statement is verfied.

We further assume that $(\ld_b(i),\ld_b(i+1))=(\mn+1, \mn+1)$.
Then $\ld=\ld_b$ and $\bb'_0=\bb'$.
To prove the converse, we argue by contradiction.
If $\ve_i(s_{i-1}\bb')=\ve_i(\bb')$, then by Definition \ref{crystaldefi} (iii), we have $\ve_i(\bb')=\ve_i(s_{i-1}\bb')>0$.
By Lemma \ref{right} (ii), $u^{\ell_i}$ is also the rightmost $-$ to $+$ in $u(\bb')_{\red}$.
So if moreover $\ell_{i-1}<\ell_i$, then $\ve_i(s_{i-1}\bb')=\ve_i(\bb')+1$, which is a contradiction.
This proves the second statement.
\end{proof}

\begin{rema}
\label{remawbve}
In the proof of Lemma \ref{wbve}, the assumption $\supp(w)\subset\supp(w_\up)$ is used only in the third paragraph to treat the case where $\la\chi_{i,i+1},\ld_b\ra=-1$.
So if $\la\chi_{i,i+1},\ld_b\ra\neq -1$, the lemma is true for any allowed conjugate $\ld$ with $\la \chi_{i-1,i},\ld\ra=-1$ (resp.\ $\la \chi_{i+1, i+2}, \ld\ra=-1$) such that $s_{i-1}\ld$ (resp.\ $s_{i+1}\ld$) is allowed.
\end{rema}

We need the following corollary to treat the case $(\ld_b(i),\ld_b(i+1))=(\mn+1,\mn)$.
This corollary tells us that the converse of Lemma \ref{wbve} does not hold only if $s_{i-1}s_i\le w(\bb)$ and $s_{i+1}s_i\le w(\bb)$.
\begin{coro}
\label{wbve2}
Assume that $(\ld_b(i),\ld_b(i+1))=(\mn+1,\mn)$.
Let $j_1=\min\{j'\mid i+1< j'\le n,\ld(j')=\mn+1\}$, and let $j_2=\max\{j'\mid 1\le j'< i,\ld(j')=\mn\}$.
\begin{enumerate}[(i)]
\item Assume that $s_{i+1}s_is_{i-1}\le w(\bb)$ (resp.\ $s_{i-1}s_is_{i+1}\le w(\bb)$).
Then $$\ve_i(s_{i+1}\cdots s_{j_1-1}\bb)=\ve_i(\bb)+1\quad \text{(resp.\ $\ve_i(s_{i-1}\cdots s_{j_2}\bb)=\ve_i(\bb)+1)$}.$$
\item Assume that $s_{i-1}s_i\le w(\bb)$ and $s_{i+1}s_i\le w(\bb)$.
Then $$\ve_i(s_{i-1}\cdots s_{j_2}s_{i+1}\cdots s_{j_1-1}\bb)\geq\ve_i(\bb)+1.$$
Moreover, if $\ve_i(s_{i-1}\cdots s_{j_2}s_{i+1}\cdots s_{j_1-1}\bb)=\ve_i(\bb)+1$ (resp.\ $\ve_i(\bb)+2$), then $\ve_i(s_{i+1}\cdots s_{j_1-1}\bb)=\ve_i(s_{i-1}\cdots s_{j_2}\bb)=\ve_i(\bb)$ (resp.\ $\ve_i(\bb)+1$).
\end{enumerate}
In particular, if $s_{i-1}s_i\le w(\bb)$ or $s_{i+1}s_i\le w(\bb)$, then $\ve_i(s_{i-1}\cdots s_{j_2}s_{i+1}\cdots s_{j_1-1}\bb)\geq\ve_i(\bb)+1$.
\end{coro}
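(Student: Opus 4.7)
The overall plan is to reduce each claim to analysing how a single Kashiwara operator modifies the sign sequence $u_i$. Within the chain $s_{i+1}\cdots s_{j_1-1}$, each $s_k$ acts as $\te_k$ by allowedness of the intermediate weights, and only $\te_{i+1}$ affects $u_i$, inserting one $-$; analogously, in $s_{i-1}\cdots s_{j_2}$ only $\te_{i-1}$ affects $u_i$, deleting one $+$. Set $\bb'_+=s_{i+2}\cdots s_{j_1-1}\bb$ and $\bb'_-=s_{i-2}\cdots s_{j_2}\bb$ (with the convention that an empty chain acts trivially). Then $\ve_i(\bb'_\pm)=\ve_i(\bb)$, $u_i(\bb'_\pm)=u_i(\bb)$, and $\wt(\bb'_\pm)$ is an allowed conjugate satisfying $\la \chi_{i+1,i+2},\wt(\bb'_+)\ra=-1$ and $\la \chi_{i-1,i},\wt(\bb'_-)\ra=-1$ respectively. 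Since $\la \chi_{i,i+1},\ld_b\ra=1\neq -1$, Remark \ref{remawbve} lets us apply Lemma \ref{wbve} with these intermediate weights in place of $\ld_b$.

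For part (i) under $s_{i+1}s_is_{i-1}\le w(\bb)$ (the other case is symmetric), the hypothesis implies $s_{i+1}s_i\le w(\bb)$, and since $w(\bb)$ is a Coxeter element in which each simple reflection appears exactly once, also $s_is_{i+1}\not\le w(\bb)$. Thus the forward direction of Lemma \ref{wbve} applied to $\bb'_+$ does not force $\ve_i(s_{i+1}\bb'_+)=\ve_i(\bb'_+)$, and one expects the opposite, namely $\ve_i(s_{i+1}\bb'_+)=\ve_i(\bb'_+)+1$. This is precisely the converse direction excluded from Lemma \ref{wbve} by $(\ld_b(i),\ld_b(i+1))=(\mn+1,\mn)$, so I would prove it directly, imitating the final paragraph of the proof of Lemma \ref{wbve}: use Lemma \ref{right} to locate the position $p$ in $u_i(\bb'_+)$ at which $\te_{i+1}$ inserts the new $-$, and then invoke the additional hypothesis $s_is_{i-1}\le w(\bb)$ to show that the site at which $\te_i$ eventually acts in the computation of $\bb^{\op}$ lies strictly to the right of $p$. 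Consequently the inserted $-$ cannot be cancelled by an existing $+$ in the reduction, giving $\ve_i(s_{i+1}\bb'_+)=\ve_i(\bb'_+)+1$. This combinatorial bookkeeping, grounded in the structure of $w(\bb)$ as a Coxeter element, is the main technical obstacle.

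For part (ii) under both $s_{i-1}s_i,s_{i+1}s_i\le w(\bb)$, the two chains have disjoint supports $\{j_2,\ldots,i-1\}$ and $\{i+1,\ldots,j_1-1\}$ and hence commute; their combined action on $u_i$ both inserts one $-$ (from $\te_{i+1}$) and deletes one $+$ (from $\te_{i-1}$). The hypotheses on $w(\bb)$ prevent both modifications from being simultaneously absorbed by the reduction of $u_i$, yielding $\ve_i(s_{i-1}\cdots s_{j_2}s_{i+1}\cdots s_{j_1-1}\bb)\ge\ve_i(\bb)+1$; the refined dichotomy follows by a case analysis on $u_i$ tracking whether the inserted $-$ and the deleted $+$ each pair up in the reduction, in the spirit of part (i). Finally, the ``in particular'' statement follows by combining parts (i) and (ii): if only $s_{i+1}s_i\le w(\bb)$ holds, then $s_is_{i-1}\le w(\bb)$, so $s_{i+1}s_is_{i-1}\le w(\bb)$, whence part (i) gives $\ve_i(s_{i+1}\cdots s_{j_1-1}\bb)=\ve_i(\bb)+1$; the subsequent application of $s_{i-1}\cdots s_{j_2}$ only deletes one $+$ from $u_i$, which cannot decrease $\ve_i$. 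The symmetric case is handled identically, and the case where both hypotheses hold follows directly from part (ii).
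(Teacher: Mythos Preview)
Your overall plan coincides with the paper's: both arguments reduce to how a single $\te_{i\pm 1}$ modifies $u_i$, track positions via Lemma \ref{right}, and compare with the site where $\te_i$ acts. The gap is in your execution of part (i). You write that the hypothesis $s_is_{i-1}\le w(\bb)$ shows ``the site at which $\te_i$ eventually acts \ldots\ lies strictly to the right of $p$'' and that this prevents the inserted $-$ from being cancelled. Neither step is right. The relation $s_is_{i-1}\le w(\bb)$ concerns the relative order of $s_i$ and $s_{i-1}$; it is $s_{i+1}s_i\le w(\bb)$ that compares the column of the insertion with the column on which $\te_i$ acts. And even if one knew that $\te_i$ acts to the right of $p$, this alone does not force the inserted $-$ to survive the reduction: survival requires that some later partial sum still attains the maximum $\ve_i(\bb)$, which is a statement about the \emph{first} column $l_0$ achieving that maximum, not about $l_i$ in general.

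The paper separates the two hypotheses cleanly via a contradiction. Assuming $\ve_i(s_{i+1}\cdots s_{j_1-1}\bb)=\ve_i(\bb)$, Lemma \ref{right}(ii) keeps the rightmost $-$ in the reduction fixed. Now $s_is_{i-1}\le w(\bb)$ enters through the \emph{forward} direction of Lemma \ref{wbve}: the subsequent $s_{i-1}$-step also leaves $\ve_i$ unchanged, so (again by Lemma \ref{right}(ii)) the rightmost $-$ is still in the same column. Since that box is exactly what $\te_i$ changes, one gets $l_i=l_0$, where $l_0$ is the minimal column with $\la\chi_{i+1,i},\wt(\bb_1)+\cdots+\wt(\bb_{l_0})\ra=\ve_i(\bb)$. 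Only at this point does $s_{i+1}s_i\le w(\bb)$ enter: it forces the inserted $-$ to lie in a column $\le l_i=l_0$, which makes the new partial sum exceed $\ve_i(\bb)$ and contradicts the assumption. Part (ii) follows the same column-index bookkeeping; your sketch of the commutativity, of the ``moreover'' dichotomy, and of the ``in particular'' clause is otherwise fine, but the core positional argument needs to be rewritten along these lines.
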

\begin{proof}
If $\ve_i(\bb)=0$, then (i) follows from Definition \ref{crystaldefi} (iii) and Lemma \ref{wbve}.
If $\ve_i(\bb)>0$, let $l_0$ be the minimal integer such that $\la \chi_{i+1,i}, \wt(\bb_1)+\wt(\bb_2)+\cdots +\wt(\bb_{l_0})\ra=\ve_i(\bb)$.
If $s_is_{i-1}\le w(\bb)$ (resp.\ $s_is_{i+1}\le w(\bb)$) and $\ve_i(s_{i+1}\cdots s_{j_1-1}\bb)=\ve_i(\bb)$ (resp.\ $\ve_i(s_{i-1}\cdots s_{j_2}\bb)=\ve_i(\bb)$), then by Lemma \ref{wbve} and Lemma \ref{right} (ii), $l_i=l_0$.
However, this and $s_{i+1}s_i\le w(\bb)$ (resp.\ $s_{i-1}s_i\le w(\bb)$) imply $\ve_i(s_{i+1}\cdots s_{j_1-1}\bb)=\ve_i(\bb)+1$ (resp.\ $\ve_i(s_{i-1}\cdots s_{j_2}\bb)=\ve_i(\bb)+1$), which is a contradiction.
Thus (i) follows.

Set $\bb'=s_{i-1}\cdots s_{j_2}s_{i+1}\cdots s_{j_1-1}\bb$.
We show that if $l_{i+1}<l_i$ (resp.\ $l_{i-1}<l_i$), then $\ve_i(\bb')\geq \ve_i(\bb)+1$.
This follows from Definition \ref{crystaldefi} (iii) if $\ve_i(\bb)=0$.
If $\ve_i(\bb)>0$ and $\ve_i(\bb')=\ve_i(\bb)$, then $l_0=l_i$ by Lemma \ref{right} (ii).
However, $l_{i+1}<l_i=l_0$ (resp.\ $l_{i+1}<l_i=l_0$) implies $\ve_i(s_{i+1}\cdots s_{j_1-1}\bb)=\ve_i(\bb)+1$ (resp.\ $\ve_i(s_{i-1}\cdots s_{j_2}\bb)=\ve_i(\bb)+1$), which is a contradiction. 
This proves the claim.
Again by Lemma \ref{right} (ii), $l_i=l_{i-1}=l_{i+1}$ implies $\ve_i(\bb')=\ve_i(\bb)+1$.
Putting things together, we have proved the inequality in (ii).

Finally, we prove the ``moreover'' part in (ii).
Assume that $s_{i-1}s_i\le w(\bb)$ and $s_{i+1}s_i\le w(\bb)$.
If $\ve_i(\bb')=\ve_i(\bb)+2$, then the statement is obvious.
If $\ve_i(s_{i+1}\cdots s_{j_1-1}\bb)=\ve_i(\bb)+1$ (resp.\ $\ve_i(s_{i+1}\cdots s_{j_1-1}\bb)=\ve_i(\bb)$) and $\ve_i(s_{i-1}\cdots s_{j_2}\bb)=\ve_i(\bb)$ (resp.\ $\ve_i(s_{i-1}\cdots s_{j_2}\bb)=\ve_i(\bb)+1$), then similarly as above, we deduce $\ve_i(\bb')=\ve_i(\bb)+2$, which is a contradiction.
So the statement for the case $\ve_i(\bb')=\ve_i(\bb)+1$ follows.
This finishes the proof.
\end{proof}

\subsection{Some Inequalities on $\ve_i(\up^{-1}\bb^-)$}
\label{inequalities}
Keep the notation in \S \ref{computation}.
Let $\bb\in \B_\mu(\ld_b)$ and $\up\in \Upsilon(\bb)$.
In this subsection, we will establish some inequalities on $\ve_i(\up^{-1}\bb^-)$ using the results in \S\ref{computation}.
These inequalities are the keys to the proof of $\up_{\xi_j(\bb,\up)}=\up w_1^{-1}\cdots w_{j-1}^{-1}$.

Set $S_{l}=\la \chi_{i+1,i}, \wt(\bb_1)+\wt(\bb_2)+\cdots +\wt(\bb_{l})\ra$ for fixed $1\le i\le n-1$.
We also set $S_0=0$.
Then $S_{l}$ is the difference of the number of $-$ and $+$ in $u_i(\bb)$ which are contained in $\bb_1,\ldots,\bb_{l}$.
Thus $S_{l}\le\ve_i(\bb)$.

\begin{lemm}
\label{inequality}
We have
$S_{l} \le\ve_i(\up^{-1}\bb^-)$ for $0\le l< l_i$.
If the equality holds for some $0\le l<l_i$, then $\up\chi_{i,i+1}\in \Phi_-$.
\end{lemm}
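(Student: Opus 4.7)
My plan is to prove the inequality $S_l\le\ve_i(\up^{-1}\bb^-)$ by a column-by-column argument and then handle the equality case by tracking tightness conditions via Lemma~\ref{wbve}, Corollary~\ref{wbve2} and Corollary~\ref{wbCoxeter}.

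For the inequality, I start from the observation (following Lemma~\ref{allowed}) that the transition $\bb\rightsquigarrow\up^{-1}\bb^-$ amounts to applying every simple reflection of the partial Coxeter element $w_\up$ exactly once, each acting as a single $\te_k$ because every intermediate pairing equals $-1$. Parallel to the factorization $\FE(\bb^{\op})=w_1\bb_1\otimes\cdots\otimes w_d\bb_d$ recorded in \S\ref{construction}, I would write $\FE(\up^{-1}\bb^-)=w'_1\bb_1\otimes\cdots\otimes w'_d\bb_d$, where $w'_j$ collects the simple reflections of $w_\up$ acting on the $j$-th column. Because $s_i\in\supp(w_{l_i})$ and each simple reflection of $w_{\max}'$ is used in exactly one $\supp(w_k)$, we have $s_i\notin\supp(w'_j)$ for every $j<l_i$. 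Within a single column, $\te_{i-1}$ converts an entry $i$ into $i-1$ (deleting one $+$ from $u_i$), $\te_{i+1}$ converts an entry $i+2$ into $i+1$ (inserting one $-$ into $u_i$), and every other simple reflection leaves $u_i$ alone; in all cases the net ``$-$ minus $+$'' of that column weakly increases. Summing over $j\le l$ gives
$$(\text{net }-\text{ in the first }l\text{ factors of }u_i(\up^{-1}\bb^-))\ge S_l.$$
Combined with the elementary bound that for any $\pm$-word the number of $-$'s in its reduced form is at least its net ``$-$ minus $+$'', and the fact that $\ve_i$ is the maximum of this quantity over all prefixes, I conclude $\ve_i(\up^{-1}\bb^-)\ge S_l$.

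For the equality case, suppose $S_l=\ve_i(\up^{-1}\bb^-)$ for some $l<l_i$. Then every step of the above chain is tight: neither $s_{i-1}$ nor $s_{i+1}$ in $w_\up$ may act on a column indexed $\le l$, the reduced form of the first $l$ factors of $u_i(\up^{-1}\bb^-)$ must consist purely of $-$'s, and this reduced form must already realize the global maximum $\ve_i$. To extract $\up(i)>\up(i+1)$, I would combine these tightness conditions with Lemma~\ref{wbve} and Corollary~\ref{wbve2}, which describe precisely how $\ve_i$ is forced to behave when $s_{i\pm1}$ acts before or after $s_i$ along $w(\bb)$; interpreted at the critical column, their conclusions say that $w(\bb)$ realizes specific adjacency patterns $s_is_{i\pm1}$ or $s_{i\pm1}s_i$, and Corollary~\ref{wbCoxeter} then translates this combinatorial information into the position of $\up(i),\up(i+1)$ relative to $n-m_0$, yielding $\up(i)>\up(i+1)$, i.e.\ $\up\chi_{i,i+1}\in\Phi_-$.

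The main obstacle is the equality case, which splits into several subcases depending on which of the adjacencies $s_is_{i-1},s_{i-1}s_i,s_is_{i+1},s_{i+1}s_i$ lie below $w(\bb)$ and which of $i-1,i,i+1$ belong to $\supp(w_\up)$. Each subcase demands its own invocation of Corollary~\ref{wbCoxeter}; the machinery of \S\ref{Coxeter tau}--\S\ref{computation} has been organized exactly to make this case analysis routine, so I expect the proof to conclude by a mildly tedious but otherwise uniform verification.
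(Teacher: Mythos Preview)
Your inequality argument has a real gap. You assert ``$s_i\notin\supp(w'_j)$ for every $j<l_i$'' and justify it by ``$s_i\in\supp(w_{l_i})$ and each simple reflection of $w_{\max}'$ is used in exactly one $\supp(w_k)$''. But the $w_k$'s record where each simple reflection acts in the computation $\bb\to\bb^{\op}$ via $w_{\max}'$, whereas your $w'_j$'s record where they act in the computation $\bb\to\up^{-1}\bb^-$ via $w_\up$. These are different computations: the simple reflections $s_{i-1},s_{i+1}$ applied \emph{before} $s_i$ can differ between $w_{\max}'$ and $w_\up$, and by Lemma~\ref{right}(ii) the column on which $\te_i$ acts depends on which of these have already been applied. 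So the column index can in principle be smaller in the second computation, and your monotonicity of the partial net count would fail. The paper avoids this issue entirely: since $s_i$ occurs at most once in $w_\up$ and the other $\te_{i\pm1}$ never decrease $\ve_i$, one has $\ve_i(\up^{-1}\bb^-)\ge\ve_i(\bb)-1$; combined with the trivial bound $S_l\le\ve_i(\bb)$ this already gives the inequality except in the borderline case $\ve_i(\up^{-1}\bb^-)=\ve_i(\bb)-1$, where Lemma~\ref{right}(ii) forces $l_0=l_i$ (with $l_0$ the minimal index at which $S_{l_0}=\ve_i(\bb)$), so $S_l\le\ve_i(\bb)-1$ for $l<l_i$.

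For the equality case your plan is broadly right, but the paper organizes the case analysis differently and more concretely: it splits according to whether $i\in\supp(w_\up)$ and according to the pair $(\ld(i),\ld(i+1))$ with $\ld=\up^{-1}\ld_b^-$. It first rules out $\la\chi_{i,i+1},\ld\ra=-1$ (using Corollary~\ref{suppwcoro}(iii) and Lemma~\ref{right}(ii)), so that only $\la\chi_{i,i+1},\ld\ra\in\{0,1\}$ survive. The case $\la\chi_{i,i+1},\ld\ra=1$ gives $\up(i)>n-m_0\ge\up(i+1)$ directly. The remaining four subcases ($i\in$ or $\notin\supp(w_\up)$; $\ld(i)=\ld(i+1)=\mn$ or $\mn+1$) are handled one by one, each time extracting from Lemma~\ref{wbve}/Corollary~\ref{wbve2} an adjacency relation in $w(\bb)$ and then invoking the matching clause of Corollary~\ref{wbCoxeter}. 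Your sketch names the right ingredients but does not indicate this case structure; in particular the exclusion of $\la\chi_{i,i+1},\ld\ra=-1$ via Corollary~\ref{suppwcoro}(iii) is a step you would need to supply.
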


\begin{proof}
If $\ve_i(\up^{-1}\bb^-)\geq \ve_i(\bb)$, then the inequality is obvious.
In particular, the inequality holds if $\ve_i(\bb)=0$.
If $\ve_i(\bb)> 0$, let $l_0$ be the minimal integer such that $S_{l_0}=\ve_i(\bb)$.
It follows from Lemma \ref{right} (ii) that if $\ve_i(\bb)> 0$ and $\ve_i(\up^{-1}\bb^-)=\ve_i(\bb)-1$, then $i\in \supp(w_\up)$ and $l_0=l_i$.
This implies the inequality.
Note that if the equality holds for some $0\le l<l_i$, then $\ve_i(\up^{-1}\bb^-)=\ve_i(\bb)-1$ or $\ve_i(\bb)$.
Set $\ld=\up^{-1}\ld_b^-$.

If $i\in \supp(w_\up)$ and $(\ld(i),\ld(i+1))=(\mn,\mn+1)$, then $(\ld_b(i),\ld_b(i+1))=(\mn, \mn+1)$ and $i-1, i+1\in \supp(w_\up)$.
If moreover, the equality holds for some $0\le l<l_i$, then we must have $\ve_i(\up^{-1}\bb^-)=\ve_i(\bb)-1$ because $(\ld_b(i),\ld_b(i+1))=(\mn, \mn+1)$ and hence $l_0=l_i$.
By Lemma \ref{wbve}, we have $s_is_{i-1}\le w(\bb)$ and $s_is_{i+1}\le w(\bb)$.
This contradicts to Corollary \ref{suppwcoro} (iii).
If $i\notin \supp(w_\up),(\ld(i),\ld(i+1))=(\mn,\mn+1)$ and the equality holds for some $0\le l<l_i$, then we must have $\ve_i(\up^{-1}\bb^-)=\ve_i(\bb)$.
By Definition \ref{crystaldefi} (iii), $\ve_i(\bb)=\ve_i(\up^{-1}\bb^-)>0$.
So by Lemma \ref{right} (ii), we have $l_0=l_i$ and hence $S_{l}\le \ve_i(\bb)-1$.
This is a contradiction.
Thus the equality implies $\la\chi_{i,i+1},\ld\ra=0$ or $1$.
If $\la\chi_{i,i+1},\ld\ra=1$, then $\up(i)>n-m_0\geq \up(i+1)$ and hence $\up\chi_{i,i+1}\in\Phi_-$.
It remains to treat the case where $\ld(i)=\ld(i+1)$.

If $i\in \supp(w_\up)$ and $\ld(i)=\ld(i+1)=\mn$, then $(\ld_b(i),\ld_b(i+1))=(\mn, \mn+1)$ or $(\mn,\mn)$.
In the former case, we have $i-1\in \supp(w_\up)$ and $i+1\notin \supp(w_\up)$.
If the equality holds for some $0\le l<l_i$, then $\ve_i(\up^{-1}\bb^-)=\ve_i(\bb)-1$.
It follows from Lemma \ref{wbve} that $s_is_{i-1}\le w(\bb)$.
In the latter case, we have $i-1, i+1\in \supp(w_\up)$.
It follows from Lemma \ref{wbve} that if the equality holds for some $0\le l<l_i$ and $\ve_i(\up^{-1}\bb^-)=\ve_i(\bb)-1$, then $s_is_{i-1}\le w(\bb)$ and $s_is_{i+1}\le w(\bb)$.
If the equality holds for some $0\le l<l_i$ and $\ve_i(\up^{-1}\bb^-)=\ve_i(\bb)$, then by Lemma \ref{wbve} and Lemma \ref{right} (ii) (or Definition \ref{crystaldefi} (iii) if $\ve_i(\bb)=0$), we must have $s_{i+1}s_i\le w(\bb)$ and hence $s_is_{i-1}\le w(\bb)$.
Thus the equality for some $0\le l<l_i$ implies $s_is_{i-1}\le w(\bb)$.
Then $\up\chi_{i,i+1}\in \Phi_-$ follows from Corollary \ref{wbCoxeter} (i) and (iii).

If $i\in \supp(w_\up)$ and $\ld(i)=\ld(i+1)=\mn+1$, then $(\ld_b(i),\ld_b(i+1))=(\mn, \mn+1)$ or $(\mn+1,\mn+1)$.
In the former case, we have $i+1\in \supp(w_\up)$ and $i-1\notin \supp(w_\up)$.
If the equality holds for some $0\le l<l_i$, then $\ve_i(\up^{-1}\bb^-)=\ve_i(\bb)-1$.
It follows from Lemma \ref{wbve} that $s_is_{i+1}\le w(\bb)$.
In the latter case, we have $i-1, i+1\in \supp(w_\up)$.
It follows from Lemma \ref{wbve} and Lemma \ref{right} (ii) (or Definition \ref{crystaldefi} (iii) if $\ve_i(\bb)=0$) that if the equality holds for some $0\le l<l_i$ and $\ve_i(\up^{-1}\bb^-)$ is equal to $\ve_i(\bb)-1$ (resp.\ $\ve_i(\bb)$), then $s_is_{i-1}\le w(\bb)$ and $s_is_{i+1}\le w(\bb)$ (resp.\ $s_{i-1}s_i\le w(\bb)$ and $s_is_{i+1}\le w(\bb)$).
Thus the equality for some $0\le l<l_i$ implies $s_is_{i+1}\le w(\bb)$.
Then $\up\chi_{i,i+1}\in \Phi_-$ follows from Corollary \ref{wbCoxeter} (i) and (ii).

If $i\notin \supp(w_\up)$ and $\ld(i)=\ld(i+1)=\mn$, then $(\ld_b(i),\ld_b(i+1))=(\mn, \mn)$ or $(\mn+1,\mn)$.
Note that if the equality holds for some $0\le l<l_i$, then $\ve_i(\up^{-1}\bb^-)=\ve_i(\bb)$.
In the former case, let $j=\min\{j'\mid i+1< j'\le n,\ld(j')=\mn+1\}$.
By $i\notin \supp(w_\up)$, $s_is_{i+1}\cdots s_{j-1}\ld$ is allowed.
Let $\bb'$ be the conjugate of $\bb$ with weight $s_{i+1}\cdots s_{j-1}\ld$.
If the equality holds for some $0\le l<l_i$, then by Lemma \ref{right} (ii) (or Definition \ref{crystaldefi} (iii) if $\ve_i(\bb)=0$) and Remark \ref{remawbve}, we have $\ve_i(\bb')=\ve_i(\bb)+1$ and hence $s_{i+1}s_i\le w(\bb)$.
In the latter case, if the equality holds for some $0\le l<l_i$, then $s_{i+1}s_is_{i-1}\le w(\bb)$ by Lemma \ref{wbve2}.
Thus the equality for some $0\le l<l_i$ implies $s_{i+1}s_i\le w(\bb)$.
Then $\up\chi_{i,i+1}\in \Phi_-$ follows from Corollary \ref{wbCoxeter} (iii) and (iv).

If $i\notin \supp(w_\up)$ and $\ld(i)=\ld(i+1)=\mn+1$, then $(\ld_b(i),\ld_b(i+1))=(\mn+1, \mn+1)$ or $(\mn+1,\mn)$.
Note that if the equality holds for some $0\le l<l_i$, then $\ve_i(\up^{-1}\bb^-)=\ve_i(\bb)$.
In the former case, let $j=\max\{j'\mid 1\le j'< i,\ld(j')=\mn\}$.
By $i\notin \supp(w_\up)$, $s_is_{i-1}\cdots s_j\ld$ is allowed.
Let $\bb'$ be the conjugate of $\bb$ with weight $s_{i-1}\cdots s_j\ld$.
If the equality holds for some $0\le l<l_i$, then by Lemma \ref{right} (ii) (or Definition \ref{crystaldefi} (iii) if $\ve_i(\bb)=0$) and Remark \ref{remawbve}, we have $\ve_i(\bb')=\ve_i(\bb)+1$ and hence $s_{i-1}s_i\le w(\bb)$.
In the latter case, if the equality holds for some $0\le l<l_i$, then $s_{i-1}s_is_{i+1}\le w(\bb)$ by Lemma \ref{wbve2}.
Thus the equality for some $0\le l<l_i$ implies $s_{i-1}s_i\le w(\bb)$.
Then $\up\chi_{i,i+1}\in \Phi_-$ follows from Corollary \ref{wbCoxeter} (ii) and (iv).
This finishes the proof.
\end{proof}

In a similar way, we will prove the following lemma.
\begin{lemm}
\label{inequality2}
\begin{enumerate}[(i)]
\item Assume that $l_{i+1}<l_i$.
We set $\delta=1$ if $i+1\in\supp(w_\up)$ and $\delta=0$  if $i+1\notin\supp(w_\up)$.
Then we have $S_{l}+\delta\le \ve_i(\up^{-1}\bb^-)$ for $l_{i+1}\le l<l_i$.
If $s_is_{i-1}\le w(\bb)$ (resp.\ $s_{i-1}s_i\le w(\bb)$), then the equality for some $l_{i+1}\le l<l_i$ implies that $i\in \supp(w_\up)$ (resp.\ $\ld_b^-(\up(i))=\mn+1$).

\item Assume that $l_{i-1}<l_i$.
We set $\delta=1$ if $i-1\in\supp(w_\up)$ and $\delta=0$  if $i-1\notin\supp(w_\up)$.
Then we have $S_{l}+\delta\le \ve_i(\up^{-1}\bb^-)$ for $l_{i-1}\le l<l_i$.
If $s_is_{i+1}\le w(\bb)$ (resp.\ $s_{i+1}s_i\le w(\bb)$), then the equality for some $l_{i+1}\le l<l_i$ implies that $i\in \supp(w_\up)$ (resp.\ $\ld_b^-(\up(i+1))=\mn$).

\item Assume that $l_{i+1}<l_i$ and $l_{i-1}<l_i$.
We set $\delta=|\{i-1,i+1\}\cap \supp(w_\up)|$.
Then we have $S_{l}+\delta\le \ve_i(\up^{-1}\bb^-)$ for $\max\{l_{i-1},l_{i+1}\}\le l<l_i$.
The equality for some $\max\{l_{i-1},l_{i+1}\}\le l<l_i$ implies that $i\in \supp(w_\up)$.

\end{enumerate}
\end{lemm}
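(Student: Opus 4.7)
The plan is to adapt the case analysis of Lemma \ref{inequality}, tracking how individual applications of $s_{i\pm 1}$ and $s_i$ change $\ve_i$ as we pass from $\bb$ to $\up^{-1}\bb^-$ via the partial Coxeter element $w_\up$. The workhorses will be Lemma \ref{wbve} (quantifying the change of $\ve_i$ under a single $s_{i\pm 1}$ action), Corollary \ref{wbve2} (for the exceptional local weight $(\mn+1,\mn)$), and Lemma \ref{right}(ii) (locating the rightmost reducible $-$).

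First I would establish the inequalities. For (i), I would factor the passage $\bb \mapsto \up^{-1}\bb^-$ through the conjugate $\bb^{\#}$ obtained by applying all reflections of $w_\up$ that precede position $l_i$ in the reduced expression of $w(\bb)$. By Lemma \ref{right}(ii) and Lemma \ref{wbve}, only the $s_i$ step (occurring at position $l_i$, if at all) can decrease $\ve_i$ in the remaining sequence, so it suffices to prove $S_l + \delta \le \ve_i(\bb^{\#})$ for $l_{i+1}\le l < l_i$. When $i+1 \in \supp(w_\up)$, the $s_{i+1}$ action at position $l_{i+1}$ converts an $i+2$ into an $i+1$ inside $\bb_{l_{i+1}}$; this inserts a new $-$ into $u_i$ at the corresponding position, which survives reduction in the prefix up to position $l$ (no $s_i$ has yet acted), yielding the extra $+1$. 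Part (ii) is the symmetric argument with $i-1$ replacing $i+1$, and part (iii) simply stacks the independent contributions of $s_{i-1}$ and $s_{i+1}$.

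For the equality clauses in (i), the hypothesis $l_{i+1}<l_i$ gives $s_{i+1}s_i \le w(\bb)$; combining with the sub-hypothesis $s_is_{i-1} \le w(\bb)$ or $s_{i-1}s_i \le w(\bb)$ places us in one of the branches of Corollary \ref{wbCoxeter}. Assuming equality $S_{l^\ast} + \delta = \ve_i(\up^{-1}\bb^-)$ for some $l_{i+1}\le l^\ast<l_i$ then forces, via Lemma \ref{wbve} and Corollary \ref{wbve2}, the rightmost reducible $-$ in the intermediate computation to sit at a prescribed box; analyzing whether this is compatible with the assumed Coxeter relations yields either $i \in \supp(w_\up)$ (in the first sub-case, by Corollary \ref{wbCoxeter}(i)/(iii)) or $\ld_b^-(\up(i))=\mn+1$ (in the second, via Corollary \ref{suppwcoro}(iv)). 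Part (ii) is mirror-symmetric, and part (iii) combines the equality conclusions of (i) and (ii) with Corollary \ref{wbCoxeter}(i) to force $i \in \supp(w_\up)$.

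The main obstacle I anticipate is the case explosion in the equality analysis: each of the four possible weights $(\ld_b(i),\ld_b(i+1))$, combined with the four possible positions of $s_{i-1},s_i,s_{i+1}$ inside $w(\bb)$ and the several possibilities for $\la \chi_{j,j+1},\up^{-1}\ld_b^-\ra$ at $j=i-1,i,i+1$, produces a branching argument with one more parameter than the proof of Lemma \ref{inequality}. Organizing the case tables along the same axes already used there, and exploiting Remark \ref{remawbve} so that the assumption $\supp(w)\subset\supp(w_\up)$ is not needed when $\la\chi_{i,i+1},\ld_b\ra\ne -1$, should keep the bookkeeping manageable.
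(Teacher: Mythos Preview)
Your plan contains a genuine gap in the inequality argument for (i). You assert that when $i+1\in\supp(w_\up)$, the action of $s_{i+1}$ (as $\te_{i+1}$, inside the computation $\bb\mapsto\up^{-1}\bb^-$ via $w_\up$) converts a box \emph{inside $\bb_{l_{i+1}}$}. But the integer $l_{i+1}$ records the column in which $\te_{i+1}$ acts during the computation of $\bb^{\op}$ via $w'_{\max}$, and there is no reason the same column is hit when the (generally different) reduced word of $w_\up$ is applied: the state on which $\te_{i+1}$ acts depends on which of $s_i,s_{i+2}$ have already been applied, and these may differ between $w'_{\max}$ and $w_\up$. Even granting the location of the new $-$, the further claim that it ``survives reduction in the prefix up to position $l$'' is not automatic; a pre-existing $+$ in that prefix can absorb it. Your factorization through an intermediate $\bb^{\#}$ (``reflections of $w_\up$ that precede position $l_i$ in the reduced expression of $w(\bb)$'') also conflates the ordering of simple reflections in $w_\up$ with the column indices $l_j$ coming from the decomposition $w(\bb)=w_1^{-1}\cdots w_d^{-1}$; these are unrelated orderings.

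The paper proceeds differently and avoids both issues. It never tracks where the $\te_{i\pm1}$ action inside $w_\up$ lands. Instead it proves, case by case on $(\ld_b(i),\ld_b(i+1))$, a direct bound on $S_l$ in the relevant range (for example, a key step is the separate Claim that $S_l\le\ve_i(\bb)-1$ for $l_{i+1}\le l<l_i$ when $\ld_b(i+1)=\mn$), and then combines this with Lemma~\ref{wbve} and Corollary~\ref{wbve2}, which control $\ve_i(\up^{-1}\bb^-)-\ve_i(\bb)$ according to whether $i-1,i,i+1$ lie in $\supp(w_\up)$. The equality clauses are then read off from which combination of these two ingredients is tight. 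Part (iii) is likewise not a simple ``stacking'' of (i) and (ii): when $(\ld_b(i),\ld_b(i+1))=(\mn+1,\mn)$ an additional Claim (that $S_l\le\ve_i(\bb)-2$ under a sharpness hypothesis on the intermediate $\ve_i$) is needed, and the $\delta=2$ case requires this together with Corollary~\ref{wbve2}. Your outline does not supply these bounds, and without them the argument does not close.
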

\begin{proof}
We first prove (i).

Assume that $l_{i+1}<l_i$, $s_is_{i-1}\le w(\bb)$ and $\ld_b(i+1)=\mn+1$.
Then $s_is_{i-1}\le w(\bb)$ combined with Lemma \ref{wbve} and Lemma \ref{right} (ii) implies $l_0=l_i$ and hence $S_{l}\le \ve_i(\bb)-1$ for $0\le l<l_i$, where $l_0$ denotes the minimal integer such that $S_{l_0}=\ve_i(\bb)$.
By Lemma \ref{wbve} and $l_{i+1}<l_i$, $i+1\in \supp(w_\up)$ implies $\ve_i(\up^{-1}\bb^-)\geq \ve_i(\bb)$.
Thus the inequality holds.
By $\ld_b(i+1)=\mn+1$, $i+1\in \supp(w_\up)$ implies $i\in \supp(w_\up)$.
If the equality holds for some $l_{i+1}\le l<l_i$ and $i\notin \supp(w_\up)$, then we must have $\ve_i(\up^{-1}\bb^-)=\ve_i(\bb)$ and hence $i+1\in\supp(w_\up)$, which is a contradiction.
Thus the equality implies $i\in \supp(w_\up)$.

Assume that $l_{i+1}<l_i$, $s_{i-1}s_i\le w(\bb)$ and $(\ld_b(i),\ld_b(i+1))=(\mn,\mn+1)$.
Then we have $S_l\le\ve_i(\bb)-1$ for $0\le l<l_i$.
The inequality follows from this, Lemma \ref{wbve} and $l_{i+1}<l_i$.
If the equality holds for some $l_{i+1}\le l<l_i$, then again by Lemma \ref{wbve}, $l_{i+1}<l_i$ and $s_{i-1}s_i\le w(\bb)$, we have $i\in \supp(w_\up)$ and $i-1\notin \supp(w_\up)$.
Thus $\ld_b^-(\up(i))=\mn+1$.

Assume that $l_{i+1}<l_i$, $s_{i-1}s_i\le w(\bb)$ and $(\ld_b(i),\ld_b(i+1))=(\mn+1,\mn+1)$.
The inequality for the case $i+1\notin \supp(w_\up)$ follows from Lemma \ref{inequality}.
If $i+1\in \supp(w_\up)$, then we have $i-1,i\in \supp(w_\up)$.
By Lemma \ref{wbve}, $l_{i+1}<l_i$ and $s_{i-1}s_i\le w(\bb)$, we also have $\ve_i(\up^{-1}\bb^-)\geq \ve_i(\bb)+1$.
Hence the inequality holds.
Note that if $i\in \supp(w_\up)$, then $\ld_b^-(\up(i))=\mn+1$.
So it remains to show that if $i\notin \supp(w_\up)$ and the equality holds for some $l_{i+1}\le l<l_i$, then $i-1\notin \supp(w_\up)$.
By $\ld_b(i+1)=\mn+1$, $i\notin \supp(w_\up)$ implies $i+1\notin \supp(w_\up)$.
So if $i\notin \supp(w_\up)$, then the equality implies $\ve_i(\up^{-1}\bb^-)=\ve_i(\bb)$.
Hence the assertion follows from Lemma \ref{wbve} and $s_{i-1}s_i\le w(\bb)$.
Thus the equality implies $\ld_b^-(\up(i))=\mn+1$.

We next treat the case where $\ld_b(i+1)=\mn$.
For this, we need the following claim.
\begin{claim}
Assume that $l_{i+1}<l_i$ and $\ld_b(i+1)=\mn$.
Then $S_{l}\le \ve_i(\bb)-1$ for $l_{i+1}\le l<l_i$.
\end{claim}
We follow the notation in Corollary \ref{wbve2}.
To check this claim, it suffices to show that if $S_{l}=\ve_i(\bb)$ for some $l_{i+1}\le l$, then $l_i\le l$.
We write $u_i(s_{i+1}\cdots s_{j_1-1}\bb)=u^1\cdots u^{\ld_b(i)+\mn+1}$.
Let $u^{\ell_{i+1}}=-$ be the box added by the action of $s_{i+1}\cdots s_{j_1-1}$.
Let $\ell$ be the maximal integer such that $u^\ell$ is contained in $\bb_{l'}$ with some $l'\le l$.
If $S_{l}=\ve_i(\bb)$ for some $l_{i+1}\le l$, then $\ell_{i+1}\le \ell$, $\ve_i(s_{i+1}\cdots s_{j_1-1}\bb)=\ve_i(\bb)+1$ and the number of $-$ in $(u^1\cdots u^{\ell})_{\red}=-\cdots -$ is $\ve_i(\bb)+1$.
If $\ld_b(i)=\mn$, then $l_i\le l'\le l$ follows immediately from this.
If $\ld_b(i)=\mn+1$, let $u^{\ell_{i-1}}=+$ be the box deleted by the action of $s_{i-1}\cdots s_{j_2}$ on $s_{i+1}\cdots s_{j_1-1}\bb$.
Then the number of $-$ in $u^1\cdots u^{\ell}$ after we delete $u^{\ell_{i-1}}$ (if $\ell_{i-1}\le \ell$) and then ``$+-$'' is $\ve_i(s_{i-1}\cdots s_{j_2}s_{i+1}\cdots s_{j_1-1}\bb)$.
So we have $l_{i+1}\le l'\le l$.
This finishes the proof of Claim 1.

Assume that $l_{i+1}<l_i$ and $\ld_b(i+1)=\mn$.
Then the inequality follows from Lemma \ref{wbve}, Corollary \ref{wbve2} and Claim 1.
By $\ld_b(i+1)=\mn$, $i\in \supp(w_\up)$ implies $i+1\in \supp(w_\up)$.
By $l_{i+1}<l_i$, Lemma \ref{wbve} and Corollary \ref{wbve2}, we have $\ve_i(\up^{-1}\bb^-)\geq \ve_i(\bb)$.
So if the equality holds for some $l_{i+1}\le l<l_i$, then by Claim 1, we must have $i+1\in\supp(w_\up)$ and $\ve_i(\up^{-1}\bb^-)=\ve_i(\bb)$.
If $s_is_{i-1}\le w(\bb)$, then Lemma \ref{wbve} and Corollary \ref{wbve2} imply $\ve_i(s_{i+1}\cdots s_{j_1-1}\bb)=\ve_i(\bb)+1$.
Thus if $s_is_{i-1}\le w(\bb)$ and the equality holds for some $l_{i+1}\le l<l_i$, we have $i\in \supp(w_\up)$.
Also, if $\ld_b(i)=\mn$ and $s_{i-1}s_i\le w(\bb)$, then by Lemma \ref{wbve}, the equality for some $l_{i+1}\le l<l_i$ implies $i\in \supp(w_\up)$ and $i-1\notin \supp(w_\up)$.
Hence $\ld_b^-(\up(i))=\mn+1$.
Note that if $\ld_b(i)=\mn+1$ and $i\in \supp(w_\up)$, then $\ld_b^-(\up(i))=\mn+1$.
Therefore it remains to show that if $\ld_b(i)=\mn+1$, $s_{i-1}s_i\le w(\bb)$, $i\notin \supp(w_\up)$ and the equality holds for some $l_{i+1}\le l<l_i$, then $i-1\notin \supp(w_\up)$.
This follows from Corollary \ref{wbve2}.

Putting things together, we have proved (i).
We can similarly prove (ii) using the following claim.
\begin{claim}
Assume that $l_{i-1}<l_i$ and $\ld_b(i)=\mn+1$.
Then $S_{l}\le \ve_i(\bb)-1$ for $l_{i-1}\le l<l_i$.
\end{claim}
The proof of this claim is also similar to that of Claim 1, so we omit the details.

We next prove (iii).
For this, we need the following claims.
\begin{claim}
Assume that $l_{i-1},l_{i+1}<l_i$.
Then $S_l\le \ve_i(\bb)-1$ for $\max\{l_{i-1},l_{i+1}\}\le l<l_i$.
\end{claim}
This claim is obvious if $(\ld_b(i),\ld_b(i+1))=(\mn,\mn+1)$.
Other cases follow from Claim 1 and Claim 2.

\begin{claim}
Assume that $l_{i-1},l_{i+1}<l_i$ and $(\ld_b(i),\ld_b(i+1))=(\mn+1,\mn)$.
If $\ve_i(s_{i+1}\cdots s_{j_1-1}\bb)=\ve_i(\bb)$ or $\ve_i(s_{i-1}\cdots s_{j_2}\bb)=\ve_i(\bb)$, then $S_l\le \ve_i(\bb)-2$ for $\max\{l_{i-1},l_{i+1}\}\le l<l_i$.
\end{claim}
It follows from Claim 1 and Claim 3 that $S_l\le \ve_i(\bb)-1$ for $\max\{l_{i-1},l_{i+1}\}\le l<l_i$.
We write $u_i(\bb)=u^1\cdots u^{2\mn+1}$.
Let $\ell$ be the maximal integer such that $u^\ell$ is contained in $\bb_{l'}$ with some $l'\le l$.
If $S_l=\ve_i(\bb)-1$ for $\max\{l_{i-1},l_{i+1}\}\le l<l_i$, then $(u^1\cdots u^\ell)_{\red}=-\cdots-$ or $-\cdots -+$.
Here the number of $-$ is $\ve_i(\bb)-1$ or $\ve_i(\bb)$ respectively.
By $\max\{l_{i-1},l_{i+1}\}\le l$ and $l_{i-1},l_{i+1}<l_i$, it follows that $\ve_i(s_{i-1}\cdots s_{j_2}s_{i+1}\cdots s_{j_1-1}\bb)=\ve_i(\bb)+2$ in both cases.
Hence $\ve_i(s_{i+1}\cdots s_{j_1-1}\bb)=\ve_i(\bb)+1$ and $\ve_i(s_{i-1}\cdots s_{j_2}\bb)=\ve_i(\bb)+1$.
This proves the claim.

Assume that $l_{i-1},l_{i+1}<l_i$ and $\delta=0$.
Then the inequality follows from Lemma \ref{inequality}.
If the equality holds for some $\max\{l_{i-1},l_{i+1}\}\le l<l_i$, then by Claim 3, we have $i\in \supp(w_\up)$.

Assume that $l_{i-1},l_{i+1}<l_i$ and $\delta=1$.
Then the inequality follows from (i) and (ii).
If $(\ld_b(i),\ld_b(i+1))= (\mn+1,\mn)$, then by Corollary \ref{wbve2} and Claim 4, the equality never holds.
If the equality holds for some $\max\{l_{i-1},l_{i+1}\}\le l<l_i$ and $(\ld_b(i),\ld_b(i+1))\neq (\mn+1,\mn)$, then by Lemma \ref{wbve} and Claim 3, we must have $i\in \supp(w_\up)$.

Assume that $l_{i-1},l_{i+1}<l_i$ and $\delta=2$.
Then our assertion follows from Claim 3 and Lemma \ref{wbve} (resp.\ Claim 4) if $(\ld_b(i),\ld_b(i+1))\neq (\mn+1,\mn)$ (resp.\ $(\ld_b(i),\ld_b(i+1))= (\mn+1,\mn)$).
This finishes the proof of (iii).
\end{proof}

For $S_{l_i}$, we have the following lemma with the same notation as in Corollary \ref{wbve2}.
\begin{lemm}
\label{equality}
If $(\ld_b(i),\ld_b(i+1))=(\mn+1,\mn)$, $s_{i-1}s_i\le w(\bb)$, $s_{i+1}s_i\le w(\bb)$ and $\ve_i(s_{i-1}\cdots s_{j_2}s_{i+1}\cdots s_{j_1-1}\bb)=\ve_i(\bb)+1$, then $S_{l_i}=\ve_i(\bb)-1$. 
Otherwise, we have $S_{l_i}=\ve_i(\bb)$.
\end{lemm}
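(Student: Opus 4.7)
The plan is to introduce the intermediate state $\bb^*$ obtained from $\bb$ by applying the Kashiwara operators corresponding to all simple reflections of $w_{\max}'$ that precede $s_i$ in the sequence of operations computing $\bb^{\op}=w_{\max}'\bb$. Since each such reflection acts as a single Kashiwara operator (because $\mn-(\mn+1)=-1$), and since by the very definition of $l_i$ the operator $\te_i$ then acts on the factor $\bb^*_{l_i}$, the Kashiwara tensor rule forces the rightmost $-$ of $u_i(\bb^*)_{\red}$ to sit inside factor $\bb^*_{l_i}$. Identifying this location with the smallest factor index at which the cumulative sum $S^{\bb^*}$ reaches its maximum yields the key equality
\[
S_{l_i}^{\bb^*}=\ve_i(\bb^*).
\]

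I would then relate $S_{l_i}^{\bb^*}$ to $S_{l_i}=S_{l_i}^{\bb}$. The only Kashiwara operators that modify $u_i$ are $\te_{i-1}$ (which removes a $+$) and $\te_{i+1}$ (which adds a $-$); each preceding occurrence raises $S_{l_i}$ by $1$. By the meaning of the ordering in $w(\bb)=w_1^{-1}\cdots w_d^{-1}$, the occurrence of $\te_{i\pm 1}$ before $\te_i$ is recorded by the condition $s_{i\pm 1}s_i\le w(\bb)$. Writing $\alpha$ and $\beta$ for the respective indicators (with $\alpha=0$ if $i-1\notin\supp(w(\bb))$ and similarly for $\beta$), this gives
\[
S_{l_i}=\ve_i(\bb^*)-\alpha-\beta.
\]

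Next, I would compare $\ve_i(\bb^*)$ with $\ve_i(\bb)$. Lemma~\ref{wbve} (combined with Remark~\ref{remawbve} to bypass the $\up$-hypothesis) applied to each of the at most two preceding steps $\te_{i\pm 1}$ shows that outside the configuration $(\ld_b(i),\ld_b(i+1))=(\mn+1,\mn)$ with $\alpha=\beta=1$, each such step increases $\ve_i$ by exactly $1$; hence $\ve_i(\bb^*)-\ve_i(\bb)=\alpha+\beta$, so $S_{l_i}=\ve_i(\bb)$. In the exceptional configuration, Corollary~\ref{wbve2}(ii) supplies the dichotomy $\ve_i(\bb^*)-\ve_i(\bb)\in\{1,2\}$, where the value $1$ is attained precisely when $\ve_i(s_{i-1}\cdots s_{j_2}s_{i+1}\cdots s_{j_1-1}\bb)=\ve_i(\bb)+1$. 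Substituting into $S_{l_i}=\ve_i(\bb^*)-2$ then gives $S_{l_i}=\ve_i(\bb)-1$ in that subcase and $S_{l_i}=\ve_i(\bb)$ when the joint jump equals $2$.

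The main obstacle is the middle step---verifying $\ve_i(\bb^*)-\ve_i(\bb)=\alpha+\beta$ outside the exceptional configuration. This requires locating the $+$ removed (or $-$ added) by $\te_{i\pm 1}$ within $u_i(\bb)_{\red}$, and splits into cases according to $(\ld_b(i-1),\ld_b(i),\ld_b(i+1),\ld_b(i+2))$ and to whether $s_{i-2},s_{i+2}$ lie in $w(\bb)$, paralleling the bookkeeping in the proofs of Lemmas~\ref{inequality} and~\ref{inequality2}. What is genuinely new in the $(\mn+1,\mn)$-case is that the steps $\te_{i-1}$ and $\te_{i+1}$ can interact (as in the third scenario of Example~\ref{wbve example}), so that only their joint contribution, measured by the auxiliary quantity appearing in the statement, is well-behaved; this is exactly the phenomenon isolated by Corollary~\ref{wbve2}.
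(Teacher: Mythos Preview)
Your high-level strategy is the same as the paper's: pass to the state $\bb^*$ reached just before $\te_i$ is applied in the $w_{\max}'$-computation, use the tensor rule to get $S^{\bb^*}_{l_i}=\ve_i(\bb^*)$, and then compare both sides with their counterparts for $\bb$. The paper's case split on $(\ld_b(i),\ld_b(i+1))$ is precisely the split according to which of $\te_{i-1},\te_{i+1}$ occur before $\te_i$ in $w_{\max}'$ (recall $\te_{i-1}$ precedes iff $\ld_b(i)=\mn+1$, and $\te_{i+1}$ precedes iff $\ld_b(i+1)=\mn$), and the auxiliary elements $s_{i+1}\cdots s_{j_1-1}\bb$, $s_{i-1}\cdots s_{j_2}\bb$ are just $\bb^*$ viewed through $u_i$.

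The gap is the sentence ``the occurrence of $\te_{i\pm1}$ before $\te_i$ is recorded by the condition $s_{i\pm1}s_i\le w(\bb)$.'' This conflates two independent orderings. Whether $\te_{i\pm1}$ precedes $\te_i$ in the computation of $\bb^{\op}$ is governed by $w_{\max}'$, hence purely by $\ld_b$; the condition $s_{i\pm1}s_i\le w(\bb)$ is instead (essentially) $l_{i\pm1}\le l_i$, which controls whether the box modified by $\te_{i\pm1}$ sits in a factor $\le l_i$ and so whether it contributes to $S^{\bb^*}_{l_i}-S_{l_i}$. These do not coincide: for example when $(\ld_b(i),\ld_b(i+1))=(\mn,\mn)$ only $\te_{i+1}$ precedes $\te_i$, so $\ve_i(\bb^*)-\ve_i(\bb)\in\{0,1\}$ and $S^{\bb^*}_{l_i}-S_{l_i}\in\{0,1\}$, yet one may still have $s_{i-1}s_i\le w(\bb)$, giving $\alpha=1$ and falsifying both of your displayed identities $S_{l_i}=\ve_i(\bb^*)-\alpha-\beta$ and $\ve_i(\bb^*)-\ve_i(\bb)=\alpha+\beta$ individually.

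What actually has to be checked, case by case, is that \emph{for each $\te_{i\pm1}$ that precedes $\te_i$ in $w_{\max}'$}, it raises $\ve_i$ by $1$ if and only if the box it modifies lies in a factor $\le l_i$. This is what the paper extracts from the converse direction of Lemma~\ref{wbve} via Remark~\ref{remawbve} (the line ``the box added by $s_{i+1}\cdots s_{j_1-1}$ is contained in one of $\bb'_1,\ldots,\bb'_{l_i}$''), together with Lemma~\ref{right}(ii) when $\ve_i$ stays fixed; and in the $(\mn+1,\mn)$ case, where both operators precede and interact, it is exactly the dichotomy of Corollary~\ref{wbve2}(ii) that decides whether the two contributions sum to $1$ or $2$, producing the exceptional value $S_{l_i}=\ve_i(\bb)-1$. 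Your sketch has the right architecture but needs this correction to the bookkeeping.
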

\begin{proof}
This is obvious if $(\ld_b(i),\ld_b(i+1))=(\mn,\mn+1)$.
If $(\ld_b(i),\ld_b(i+1))=(\mn,\mn)$ and $\ve_i(s_{i+1}\cdots s_{j_1-1}\bb)=\ve_i(\bb)$, then $S_{l_i}=\ve_i(\bb)$ follows from Lemma \ref{right} (ii).
If $(\ld_b(i),\ld_b(i+1))=(\mn,\mn)$ and $\ve_i(s_{i+1}\cdots s_{j_1-1}\bb)=\ve_i(\bb)+1$, then $\la\chi_{i+1,i},\wt(\bb'_1)+\cdots +\wt(\bb'_{l_i})\ra=\ve_i(\bb)+1$, where $\FE(s_{i+1}\cdots s_{j_1-1}\bb)=\bb'_1\otimes\cdots\otimes \bb'_d$.
By Remark \ref{remawbve}, the box added by the action of $s_{i+1}\cdots s_{j_1-1}$ is contained in one of $\bb'_1,\ldots,\bb'_{l_i}$.
This implies $S_{l_i}=\ve_i(\bb)$.
The proof for the case $(\ld_b(i),\ld_b(i+1))=(\mn+1,\mn+1)$ is similar.
By Lemma \ref{wbve} and Corollary \ref{wbve2}, the proof for the case $(\ld_b(i),\ld_b(i+1))=(\mn+1,\mn)$ is also similar.
\end{proof}

Set $T_l=\la\chi_{i,i+1},\wt(\bb_{l_i+1})+\cdots+\wt(\bb_{l})\ra$ for $l_i<l$.
We also set $T_{l_i}=0$.
We will also need the following inequality.
\begin{lemm}
\label{inequality3}
For $l_i\le l$, we have $T_l\geq 0$.
If $s_is_{i-1}\le w(\bb)$ (resp.\ $s_is_{i+1}\le w(\bb)$) and the equality holds for some $l_{i-1}\le l$ (resp.\ $l_{i+1}\le l$), then $\ld_b(i)=\mn$ (resp.\ $\ld_b(i+1)=\mn+1$).
Similarly, if $s_is_{i-1}\le w(\bb)$, $s_is_{i+1}\le w(\bb)$ and $T_l=1$ for some $\max\{l_{i-1},l_{i+1}\}\le l$, then $\ld_b(i)=\mn$ or $\ld_b(i+1)=\mn+1$.
\end{lemm}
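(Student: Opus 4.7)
The plan is to introduce the intermediate state $\bb^\circ$ obtained from $\bb$ by performing precisely those operators $\te_j$ that precede $\te_i$ in the word $w_{\max}'$, and to compare $T_l$ (computed from $\bb$) with the analogous quantity computed from $\bb^\circ$, which I denote $T_l(\bb^\circ)$. By the definition of $l_i$, the single operator $\te_i$ applied to $\bb^\circ$ flips the rightmost $-$ in $u_i(\bb^\circ)_{\red}$, and this rightmost $-$ sits inside the block of $u_i(\bb^\circ)$ corresponding to $\bb_{l_i}$. Let $f(p)$ denote the running count of $-$'s minus the running count of $+$'s in the first $p$ positions of $u_i(\bb^\circ)$. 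The pivot lying in the block of $\bb_{l_i}$ forces the maximum of $f$ to be attained at the end of that block, and at no earlier position. Hence for all $l\geq l_i$ one has $f(\text{end of }\bb_l)\leq \max f = f(\text{end of }\bb_{l_i})$, which rearranges to $T_l(\bb^\circ) \geq 0$.

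Next I would relate $T_l$ and $T_l(\bb^\circ)$. Among the operators applied between $\bb$ and $\bb^\circ$, only $\te_{i-1}$ (which removes a $+$ from column $l_{i-1}$) and $\te_{i+1}$ (which adds a $-$ to column $l_{i+1}$) affect $u_i$. Reading $w_{\max}' = (s_{i_{m_0-1}}\cdots s_{n-1})\cdots(s_1\cdots s_{i_1-1})$ from right to left and inspecting which factor contains each $s_j$ shows directly that $\te_{i-1}$ precedes $\te_i$ iff $i$ equals some $i_k$, i.e.\ iff $\ld_b(i)=\mn+1$, and similarly $\te_{i+1}$ precedes $\te_i$ iff $\ld_b(i+1)=\mn$. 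Combining the effects on $T_l$ yields
\[
T_l = T_l(\bb^\circ) + [\ld_b(i)=\mn+1]\cdot [l_i<l_{i-1}\leq l] + [\ld_b(i+1)=\mn]\cdot [l_i<l_{i+1}\leq l],
\]
so in particular $T_l \geq T_l(\bb^\circ) \geq 0$, proving the main inequality.

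For the refinements I argue by contradiction. Suppose $s_is_{i-1}\leq w(\bb)$ and $T_l=0$ for some $l\geq l_{i-1}$, and assume $\ld_b(i)=\mn+1$. Since $s_i$ and $s_{i-1}$ are adjacent non-commuting reflections, $s_is_{i-1}\leq w(\bb)$ forces $s_i$ to precede $s_{i-1}$ in every reduced expression of $w(\bb)=w_1^{-1}\cdots w_d^{-1}$, which rules out $l_{i-1}<l_i$. If $l_{i-1}>l_i$, the $\te_{i-1}$-correction above equals $1$, forcing $T_l\geq 1$. If $l_{i-1}=l_i$, both $s_i$ and $s_{i-1}$ belong to $w_{l_i}$ and the reading order inside $w_{l_i}^{-1}$ matches the order of application on column $l_i$; since $\ld_b(i)=\mn+1$ makes $\te_{i-1}$ act globally (and hence on column $l_i$) before $\te_i$, the Bruhat hypothesis is again violated. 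Thus $\ld_b(i)=\mn$, proving claim (a). Claim (b) is the mirror argument via $\te_{i+1}$. For claim (c), applying both arguments in parallel shows that if $(\ld_b(i),\ld_b(i+1))=(\mn+1,\mn)$ then both corrections are forced to equal $1$, giving $T_l\geq 2$, a contradiction.

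The main obstacle is aligning three distinct orderings: the application order of the $\te_j$'s prescribed by $w_{\max}'$, the Bruhat order of reduced expressions of $w(\bb)=w_1^{-1}\cdots w_d^{-1}$, and the landing-column indices $l_j$. Once the translation ``$\te_{j-1}$ is applied before $\te_j$'' $\Leftrightarrow$ ``$\ld_b(j)=\mn+1$'' is made explicit, the additive correction formula becomes available and the lemma reduces to a clean case analysis built on the pivot-at-$l_i$ observation.
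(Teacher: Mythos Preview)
Your proof is correct and follows essentially the same approach as the paper. The paper also passes to an intermediate allowed state $\bb'$ (your $\bb^\circ$) where $\te_i$ has not yet been applied, uses that $\te_i$ acts on column $l_i$ there to get $\la\chi_{i,i+1},\wt(\bb'_{l_i+1})+\cdots+\wt(\bb'_l)\ra\ge 0$, and then derives the refinements by contradiction from $\ld_b(i+1)=\mn\Leftrightarrow s_is_{i+1}\le w'_{\max}$ (resp.\ the analogous statement for $i-1$); your explicit additive correction formula and the case split on $l_{i-1}$ versus $l_i$ make the same steps more transparent.
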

\begin{proof}
Let $\ld$ be an allowed cocharacter with $i\notin\supp(w)$ and $\la\chi_{i,i+1},\ld\ra=-1$, where $w$ is the partial Coxeter element associated with $\ld$.
Let $\bb'$ be be the conjugate of $\bb$ with $\wt(\bb')=\ld$.
Set $\FE(\bb')=\bb'_1\otimes \cdots \otimes \bb'_d$.
Then the action of $s_i$ on $\bb'$ changes a box in $\bb'_{l_i}$.
Since $\la\chi_{i,i+1},\wt(\bb'_{l_i+1})+\cdots+\wt(\bb'_{l})\ra$ is the difference of the number of $+$ and $-$ in $u_i(\bb')$ which are contained in $\bb'_{l_i+1},\ldots, \bb'_{l}$, we have $T_l\geq \la\chi_{i,i+1},\wt(\bb'_{l_i+1})+\cdots+\wt(\bb'_{l})\ra\geq 0$.

Assume that $s_is_{i+1}\le w(\bb)$ and the equality holds for $l_{i+1}\le l$.
If $l=l_i$, then $l_i=l_{i+1}$ and hence $\ld_b(i+1)=\mn+1$.
Assume moreover that $l_i<l$.
To show $\ld_b(i+1)=\mn+1$, we argue by contradiction.
If $\ld_b(i+1)=\mn$, i.e., $s_is_{i+1}\le w'_{\max}$, then $s_is_{i+1}\le w(\bb)$ implies $l_i<l_{i+1}$.
So if $\ld_b(i+1)=\mn$ and the equality holds for $l_{i+1}\le l$, then $\la\chi_{i,i+1},\wt(\bb'_{l_i+1})+\cdots+\wt(\bb'_{l})\ra\le -1$.
This implies $\la\chi_{i+1,i},\wt(\bb'_1)+\cdots+\wt(\bb'_{l})\ra\geq \ve_i(\bb')+1$, which is a contradiction.
The rest of the statement follows in the same way.
The proof is finished.
\end{proof}

\begin{rema}
In \S\ref{computation} and \S\ref{inequalities}, $l_i$ denotes an integer such that $i\in\supp(w_{l_i})$.
However, in the proof of Proposition \ref{upsilon general} or Proposition \ref{wbtensor}, $l_h$ denotes an integer such that $j_h\in \supp(w_{l_h})$.
We hope our notation will not cause confusions.
\end{rema}

\subsection{Proof of $\up_{\xi_l(\bb,\up)}=\up w_1^{-1}\cdots w_{l-1}^{-1}$}
\label{upsilonsubsection}
Fix $\bb\in \B_{\mu}(\ld_b)$ and $\up\in \Upsilon(\bb)$.
Set $\up_l=\up w_1^{-1}w_2^{-1}\cdots w_{l-1}^{-1}$ for $\up\in \Upsilon(\bb)$ and $1\le l\le d$.
We write $\xi_l$ for $\xi_l(\bb, \up)$.
The goal of this section is to prove $\up_{\xi_l}=\up_l$.

Fix a reduced expression $s_{j_1}s_{j_2}\cdots s_{j_{n-1}}$ of $w(\bb)$ such that
\begin{align*}
&w_1^{-1}=s_{j_1}s_{j_2}\cdots s_{j_{\ell(w_1)}},\\
&w_2^{-1}=s_{j_{\ell(w_1)+1}}s_{j_{\ell(w_1)+2}}\cdots s_{j_{\ell(w_1)+\ell(w_2)}},\\
&\ \vdots \\
&w_{d}^{-1}=s_{j_{\ell(w_1)+\cdots +\ell(w_{d-1})+1}}s_{j_{\ell(w_1)+\cdots +\ell(w_{d-1})+2}}\cdots s_{j_{\ell(w_1)+\cdots+\ell(w_d)}}.
\end{align*}
Define $1\le l_h\le d$ by $j_h\in \supp(w_{l_h})$.
Then $l_1\le l_2\le\cdots \le l_{n-1}$.
\begin{lemm}
\label{upsilon claim}
For each $1\le h\le n-1$ such that $l_h\le d-1$, we have
\begin{align*}
&j_h\in \supp(w_\up)\Leftrightarrow \la \chi_{j_h, j_h+1},s_{j_{h-1}}\cdots s_{j_2}s_{j_1}\up^{-1}\xi_{l_h+1}\ra=-1, \\
&j_h\notin \supp(w_\up)\Leftrightarrow \la \chi_{j_h, j_h+1},s_{j_{h-1}}\cdots s_{j_2}s_{j_1}\up^{-1}\xi_{l_h+1}\ra=0.
\end{align*}
\end{lemm}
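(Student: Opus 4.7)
The plan is to evaluate $\la \chi_{j_h,j_h+1},\nu_h\ra$ with $\nu_h := s_{j_{h-1}}\cdots s_{j_1}\up^{-1}\xi_{l_h+1}$ directly from the definition of $\xi_{l_h+1}$. Setting $k(l):=\ell(w_1)+\cdots+\ell(w_l)$, one has $\up^{-1}\up_{j'} = s_{j_1}\cdots s_{j_{k(j'-1)}}$, so
$$\up^{-1}\xi_{l_h+1} = \xi(\up^{-1}\bb^-) + \sum_{j'=1}^{l_h} s_{j_1}\cdots s_{j_{k(j'-1)}}\wt(\bb_{j'}).$$
Applying $s_{j_{h-1}}\cdots s_{j_1}$ and using the cancellation $s_{j_{h-1}}\cdots s_{j_1}\cdot s_{j_1}\cdots s_{j_{k(j'-1)}} = s_{j_{h-1}}\cdots s_{j_{k(j'-1)+1}}$ (legitimate since $j'\le l_h$ forces $k(j'-1)\le k(l_h-1)<h$) gives
$$\nu_h = s_{j_{h-1}}\cdots s_{j_1}\xi(\up^{-1}\bb^-) + \sum_{j'=1}^{l_h} s_{j_{h-1}}\cdots s_{j_{k(j'-1)+1}}\wt(\bb_{j'}).$$

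Next I would pair with $\chi_{j_h,j_h+1}$ and use $\la\alpha,w\mu\ra=\la w^{-1}\alpha,\mu\ra$ on each term. Setting $\beta_h := s_{j_1}\cdots s_{j_{h-1}}\chi_{j_h,j_h+1}$, which is a positive root (an inversion of $w(\bb)^{-1}$), Lemma \ref{suppw} identifies $j_h\in\supp(w_\up)$ with $\up\beta_h\in\Phi_-$, so the target equivalences become: $\la\chi_{j_h,j_h+1},\nu_h\ra=-1$ when $\up\beta_h\in\Phi_-$ and $=0$ when $\up\beta_h\in\Phi_+$. The pairing $\la\beta_h,\xi(\up^{-1}\bb^-)\ra$ is computed via the identity $\la\chi_{a,b},\xi(\bb')\ra=\ve_a(\bb')+\cdots+\ve_{b-1}(\bb')$, and the column terms $\la s_{j_{k(j'-1)+1}}\cdots s_{j_{h-1}}\chi_{j_h,j_h+1},\wt(\bb_{j'})\ra$ can be read off directly from the boxes of $\bb_{j'}$ at positions $j_h, j_h+1$ (and possibly $j_h\pm 1$ when the prefix already contains $s_{j_h\pm1}$). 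What emerges is an expression controlled by $\ve_{j_h}(\up^{-1}\bb^-)$, the partial sum $S_{l_h}$, and a small correction tracking the adjacencies of $s_{j_h}$ to $s_{j_h\pm 1}$ in $w(\bb)$.

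The argument then proceeds by case analysis, organized by (a) which of $s_{j_h\pm 1}s_{j_h}$ or $s_{j_h}s_{j_h\pm1}$ are subwords of $w(\bb)$ and (b) the four possibilities for $(\ld_b(j_h),\ld_b(j_h+1))$. Lemma \ref{equality} pins down $S_{l_h}$ exactly (as $\ve_{j_h}(\bb)$, or $\ve_{j_h}(\bb)-1$ in one exceptional configuration), while Lemma \ref{wbve}, Corollary \ref{wbve2}, and Remark \ref{remawbve} compute $\ve_{j_h}(\up^{-1}\bb^-)-\ve_{j_h}(\bb)$ from the adjacencies in $w(\bb)$ and which of $s_{j_h\pm1}$ belong to $\supp(w_\up)$. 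Corollary \ref{suppwcoro} then translates these adjacency conditions back into statements about $j_h\in\supp(w_\up)$, while Lemmas \ref{inequality} and \ref{inequality2} guarantee the partial-sum estimates are tight so that no off-by-one error arises from earlier columns. A uniform sign bookkeeping in each case produces the claimed value $0$ or $-1$.

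The main obstacle is the branch $(\ld_b(j_h),\ld_b(j_h+1))=(\mn+1,\mn)$ together with both $s_{j_h-1}s_{j_h}\le w(\bb)$ and $s_{j_h+1}s_{j_h}\le w(\bb)$, where the exceptional case of Lemma \ref{equality} activates simultaneously with Corollary \ref{wbve2}(ii). Coordinating these two shifts with the column sum so that the net result is exactly $0$ or exactly $-1$ (and not $\pm 1$ off) is what makes Lemma \ref{equality} and Corollary \ref{wbve2} the decisive ingredients in this branch, and is where the calculation requires the greatest care.
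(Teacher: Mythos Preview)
Your setup in the first paragraph is correct, and you have correctly identified Lemma~\ref{suppw} as the bridge between $j_h\in\supp(w_\up)$ and the sign of $\up\beta_h$. You have also located the right auxiliary lemmas (Lemma~\ref{wbve}, Corollary~\ref{wbve2}, Lemma~\ref{equality}, Corollary~\ref{suppwcoro}). However, your plan has a structural gap and one misplaced ingredient.

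The paper's proof is by \emph{induction on $h$}. The decisive identity (stated in the paper for the case $j_{h'}=j_h+1$, $j_h-1\notin\{j_1,\dots,j_{h-1}\}$, and analogously in the other cases) is
\[
\la\chi_{j_h,j_h+1},\nu_h\ra \;=\; \ve_{j_h}(\up^{-1}\bb^-)\;+\;\la\chi_{j_h,j_h+1},\wt(\bb_1)+\cdots+\wt(\bb_{l_h})\ra\;+\;\la\chi_{j_{h'},j_{h'}+1},\nu_{h'}\ra,
\]
so the ``correction'' coming from the appearance of $s_{j_h\pm1}$ earlier in the word is \emph{exactly} the quantity already evaluated at step $h'$. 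By the induction hypothesis that quantity is $0$ or $-1$ according to whether $j_{h'}\in\supp(w_\up)$, and then Lemma~\ref{wbve}, Corollary~\ref{wbve2}, Lemma~\ref{equality} (and Corollary~\ref{suppwcoro}(iv) to exclude one configuration when both $j_h\pm1$ precede $j_h$) finish each case in one line. Your direct approach via $\la\beta_h,\xi(\up^{-1}\bb^-)\ra=\ve_a+\cdots+\ve_{b-1}$ forces you to control $\ve_i(\up^{-1}\bb^-)$ for \emph{every} $i$ between $a$ and $b-1$, and likewise column pairings against roots of growing height; you never explain how those extra terms collapse. They collapse precisely because they assemble into the $\nu_{h'}$-terms --- that is the content of the recursion, and without making it explicit your ``uniform sign bookkeeping'' is not a proof.

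Separately, Lemmas~\ref{inequality} and~\ref{inequality2} do not belong here: they produce inequalities $S_l\le\ve_i(\up^{-1}\bb^-)$ (with information about when equality holds), whereas Lemma~\ref{upsilon claim} requires \emph{exact} values. Those two lemmas are used later, in the proof of Proposition~\ref{upsilon general}, to establish the nonnegativity $\la\chi_{i,i+1},\up_{l+1}^{-1}\xi_{l+1}\ra\ge 0$; in the present lemma the exact value comes from Lemma~\ref{equality} plus the inductive term, not from tightness of an estimate. Replacing your appeal to Lemmas~\ref{inequality}--\ref{inequality2} by the inductive identity above would repair the argument.
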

\begin{proof}
We first prove the case for $h=1$.
Note that $s_{j_1}s_{j_1-1}\le w(\bb)$ and $s_{j_1}s_{j_1+1}\le w(\bb)$.
So by Lemma \ref{wbve}, $j_1\in \supp(w_\up)$ (resp.\ $j_1\notin \supp(w_\up)$) if and only if $\ve_{j_1}(\up^{-1}\bb^-)=\ve_{j_1}(\bb)-1$ (resp.\ $\ve_{j_1}(\up^{-1}\bb^-)=\ve_{j_1}(\bb)$).
By Lemma \ref{equality}, we have $\la\chi_{j_1+1,j_1},\wt(\bb_1)+\cdots+\wt(\bb_{l_1})\ra=\ve_{j_1}(\bb)$.
This and $\up^{-1}\xi_{l_1+1}=\xi(\up^{-1}\bb^-)+\wt(\bb_1)+\cdots+\wt(\bb_{l_1})$ imply that we have $j_1\in \supp(w_\up)$ (resp.\ $j_1\notin \supp(w_\up)$) if and only if $\la \chi_{j_1, j_1+1}, \up^{-1}\xi_{l_1+1}\ra=-1$ (resp.\ $\la \chi_{j_1, j_1+1}, \up^{-1}\xi_{l_1+1}\ra=0$).

Assume that our claim is true for $1,2,\ldots, h-1$ with $h\geq 2$.
If $j_h-1,j_h+1\notin \{j_1,j_2,\ldots,j_{h-1}\}$, then 
\begin{align*}
\la\chi_{j_h,j_h+1},s_{j_{h-1}}\cdots s_{j_2}s_{j_1}\up^{-1}\xi_{l_h+1}\ra=&\ve_{j_h}(\up^{-1}\bb^-)+\la\chi_{j_h,j_h+1},\wt(\bb_1)+\cdots+\wt(\bb_{l_h})\ra
\end{align*} 
and the statement follows in the same way of the case for $h=1$.
If $j_{h'}=j_h+1$ for some $1\le h'\le h-1$ and $j_h-1\notin  \{j_1,j_2,\ldots,j_{h-1}\}$, then
\begin{align*}
\la\chi_{j_h,j_h+1},s_{j_{h-1}}\cdots s_{j_2}s_{j_1}\up^{-1}\xi_{l_h+1}\ra=&\ve_{j_h}(\up^{-1}\bb^-)+\la\chi_{j_h,j_h+1},\wt(\bb_1)+\cdots+\wt(\bb_{l_h})\ra\\
&+\la\chi_{j_{h'},j_{h'}+1},s_{j_{h'-1}}\cdots s_{j_2}s_{j_1}\up^{-1}\xi_{l_{h'}+1}\ra.
\end{align*}
Then the assertion follows from Lemma \ref{wbve}, Corollary \ref{wbve2}, Lemma \ref{equality} and the induction hypothesis.
The proof for the case $j_{h'}=j_h-1$ for some $1\le h'\le h-1$ and $j_h+1\notin  \{j_1,j_2,\ldots,j_{h-1}\}$ is similar.
If $\{j_{h'},j_{h''}\}=\{j_h-1,j_h+1\}$ for $1\le h'<h''\le h-1$, then 
\begin{align*}
\la\chi_{j_h,j_h+1},s_{j_{h-1}}\cdots s_{j_2}s_{j_1}\up^{-1}\xi_{l_h+1}\ra=&\ve_{j_h}(\up^{-1}\bb^-)+\la\chi_{j_h,j_h+1},\wt(\bb_1)+\cdots+\wt(\bb_{l_h})\ra\\
&+\la\chi_{j_{h'},j_{h'}+1},s_{j_{h'-1}}\cdots s_{j_2}s_{j_1}\up^{-1}\xi_{l_{h'}+1}\ra\\
&+\la\chi_{j_{h''},j_{h''}+1},s_{j_{h''-1}}\cdots s_{j_2}s_{j_1}\up^{-1}\xi_{l_{h''}+1}\ra.
\end{align*}
By Corollary \ref{suppwcoro} (iv), the case where $s_{j_h-1}s_{j_h}\le w(\bb)$, $s_{j_h+1}s_{j_h}\le w(\bb)$ and $j_h-1,j_h+1\notin \supp(w_\up)$ does not occur.
Then the assertion follows from this, Lemma \ref{wbve}, Corollary \ref{wbve2}, Lemma \ref{equality} and the induction hypothesis.
Thus the statement is true for $h$.
By induction, this finishes the proof.
\end{proof}

\begin{prop}
\label{upsilon general}
We have $U_{\xi_l(\bb, \up)}=\up_l U\up_l^{-1}$, i.e., $\up_{\xi_l}=\up_l$ for any $1\le l\le d$.
\end{prop}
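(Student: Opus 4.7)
The plan is to proceed by induction on $l$, taking Lemma~\ref{upsilon claim} as the principal combinatorial input. Throughout, I set $\ld^{(l)}=\up_l^{-1}\xi_l$; the desired conclusion $\up_{\xi_l}=\up_l$ is equivalent to $\ld^{(l)}$ being dominant together with the tie-breaking rule that whenever $\ld^{(l)}(i)=\ld^{(l)}(i+1)$ we have $\up_l\chi_{i,i+1}\in\Phi_-$.

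For the base case $l=1$, I observe $\ld^{(1)}=\xi(\up^{-1}\bb^-)$, which by its very definition as $(\ve_1+\cdots+\ve_{n-1},\ve_2+\cdots+\ve_{n-1},\ldots,\ve_{n-1},0)$ is weakly decreasing with last entry $0$, hence dominant. The tie-breaking condition then amounts to showing that $\ve_i(\up^{-1}\bb^-)=0$ forces $\up\chi_{i,i+1}\in\Phi_-$. I would prove this by a case split on $\la\chi_{i,i+1},\up^{-1}\ld_b^-\ra\in\{-1,0,1\}$ and on which of $s_is_{i\pm 1}$, $s_{i\pm 1}s_i$ divide $w(\bb)$, invoking Corollary~\ref{wbCoxeter} to identify $\up\chi_{i,i+1}$'s sign from $w_\up$ and Lemma~\ref{wbve} to relate $\ve_i(\up^{-1}\bb^-)$ to $\ve_i(\bb)$.

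For the inductive step, the recursion $\xi_{l+1}=\xi_l+\up_l\wt(\bb_l)$ together with $\up_{l+1}=\up_l w_l^{-1}$ gives the key identity
$$\ld^{(l+1)} \;=\; w_l\bigl(\ld^{(l)}+\wt(\bb_l)\bigr).$$
By inductive hypothesis $\ld^{(l)}$ is dominant in the refined sense, and since $\wt(\bb_l)$ is a $0$-$1$ indicator vector, $\ld^{(l)}+\wt(\bb_l)$ can fail to be weakly decreasing only at positions where $\wt(\bb_l)$ jumps from $0$ to $1$. The role of $w_l$ is precisely to sort these defects back, and to verify this I reduce to the dominance conditions one simple root at a time. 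Using the connected-component structure of $\supp(w_l)$ from Lemma~\ref{w} and its concrete reduced expression therein, the simple roots $\chi_{j_h,j_h+1}$ with $j_h\in\supp(w_l)$ are the cases that require work; there I invoke Lemma~\ref{upsilon claim} with the relevant shift, which equates $j_h\in\supp(w_\up)$ vs.\ $j_h\notin\supp(w_\up)$ with the inner product $\la\chi_{j_h,j_h+1},s_{j_{h-1}}\cdots s_{j_1}\up^{-1}\xi_{l+1}\ra$ being $-1$ vs.\ $0$, which matches exactly the strict-vs.-weak inequality required by the tie-breaking rule for $\up_{l+1}$ at the corresponding position. For indices $i\notin\supp(w_l)$, $w_l$ acts trivially and dominance is inherited from $\ld^{(l)}$ (adjusted by $\wt(\bb_l)$, which cannot create new defects there).

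The main obstacle I anticipate is the careful bookkeeping to match the sign output of Lemma~\ref{upsilon claim} with the sign of $\up_{l+1}\chi_{j_h,j_h+1}$, especially at the boundary of a connected component of $\supp(w_l)$, where the behavior depends on whether the neighboring index lies in $\supp(w_\up)$, on the values of $\ld_b^-$ at $\up(j_h),\up(j_h+1)$, and on which factor $s_{j_h}s_{j_h\pm 1}$ vs.\ $s_{j_h\pm 1}s_{j_h}$ divides $w(\bb)$. Navigating this case analysis uniformly, while keeping the disjoint-component argument honest, is the technical heart of the proof.
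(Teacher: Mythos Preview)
Your inductive scheme has a genuine gap, and the paper's proof in fact does \emph{not} proceed by induction on $l$.

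The central problem is your sentence ``for indices $i\notin\supp(w_l)$, $w_l$ acts trivially and dominance is inherited from $\ld^{(l)}$ (adjusted by $\wt(\bb_l)$, which cannot create new defects there).'' This is false as stated. Take $i\notin\supp_l$ with $i-1,i+1\notin\supp_l$ and $l<l_i$. Then $\la\chi_{i,i+1},\ld^{(l+1)}\ra=\la\chi_{i,i+1},\ld^{(l)}\ra+\la\chi_{i,i+1},\wt(\bb_l)\ra$, and nothing prevents $\wt(\bb_l)(i)=0$, $\wt(\bb_l)(i+1)=1$, so the pairing drops by $1$. Your inductive hypothesis only gives $\la\chi_{i,i+1},\ld^{(l)}\ra\ge 0$; you would need $\ge 1$ here, and the tie-breaking data about the sign of $\up_l\chi_{i,i+1}$ does not supply this. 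The same issue recurs for $l>l_i$ (i.e.\ $i\in\supp_{l-1}$ but $i\notin\supp(w_l)$), where again a drop can occur. Lemma~\ref{upsilon claim} cannot rescue this: it is a statement only at the single step $l=l_h$ for the position $j_h$, not a uniform bound over all $l$.

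What the paper actually does is bypass induction entirely and control $\la\chi_{i,i+1},\up_{l+1}^{-1}\xi_{l+1}\ra$ directly, splitting on whether $i\in\supp_l:=\supp(w_1)\cup\cdots\cup\supp(w_l)$ (not just $\supp(w_l)$) and on which of $i\pm 1$ lie in $\supp_l$. The substance lives in \S\ref{inequalities}: Lemma~\ref{inequality} bounds $S_l:=\la\chi_{i+1,i},\wt(\bb_1)+\cdots+\wt(\bb_l)\ra\le\ve_i(\up^{-1}\bb^-)$ for all $l<l_i$ (this is exactly the missing global bound your inductive hypothesis cannot provide), Lemma~\ref{inequality2} handles the corrections when $i\pm 1\in\supp_l$, and Lemma~\ref{inequality3} bounds $T_l:=\la\chi_{i,i+1},\wt(\bb_{l_i+1})+\cdots+\wt(\bb_l)\ra\ge 0$ for $l\ge l_i$. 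Lemma~\ref{upsilon claim} then enters only to convert the $\supp(w_\up)$ dichotomy into the $-1$/$0$ values needed for the equality cases. Your outline omits Lemmas~\ref{inequality}, \ref{inequality2}, \ref{inequality3} altogether, and these are precisely where the work is.
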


\begin{proof}
We will prove $\up_{\xi_{l+1}}=\up_{l+1}$ for $0\le l\le d-1$.
For this, we have to check that $\la \chi_{i,i+1},\up_{l+1}^{-1}\xi_{l+1}\ra\geq 0$ for any $1\le i\le n-1$ and that if $\la \chi_{i,i+1},\up_{l+1}^{-1}\xi_{l+1}\ra=0$, then $\up_{l+1}\chi_{i,i+1}\in \Phi_-$.
Set $\supp_l=\supp(w_1)\cup\cdots \cup\supp(w_l)$ (and $\supp_0=\emptyset$).

Let $i\notin \supp_l$.
Note that
$\up^{-1}_{l+1}\xi_{l+1}=w_{l}\cdots w_1\xi(\up^{-1}\bb^-)+w_{l}\cdots w_1\wt(\bb_1)+\cdots +w_{l}\wt(\bb_{l})$.
So if $i-1,i+1\notin \supp_l$, then the assertion follows from Lemma \ref{inequality}.
If $i+1\in\supp_l$ and $i-1\notin \supp_l$, then
\begin{align*}
\la\chi_{i,i+1},\up_{l+1}^{-1}\xi_{l+1}\ra&=\ve_i(\up^{-1}\bb^-)+\la\chi_{i,i+1},\wt(\bb_1)+\cdots+\wt(\bb_{l})\ra\\
&\hspace{0.6cm}+\la\chi_{j_{h},j_{h}+1},s_{j_{h-1}}\cdots s_{j_2}s_{j_1}\up^{-1}\xi_{l_{h}+1}\ra,
\end{align*}
where $j_{h}=i+1$.
Thus the assertion follows from  Lemma \ref{suppw}, Corollary \ref{suppwcoro} (ii), Lemma \ref{inequality2} (i) and Lemma \ref{upsilon claim}.
The proof for the case where $i-1\in\supp_l$ and $i+1\notin \supp_l$ is similar.
If $i-1,i+1\in\supp_l$, then \begin{align*}
\la\chi_{i,i+1},\up_{l+1}^{-1}\xi_{l+1}\ra=&\ve_i(\up^{-1}\bb^-)+\la\chi_{i,i+1},\wt(\bb_1)+\cdots+\wt(\bb_{l})\ra\\
&+\la\chi_{j_{h},j_{h}+1},s_{j_{h-1}}\cdots s_{j_2}s_{j_1}\up^{-1}\xi_{l_{h}+1}\ra\\
&+\la\chi_{j_{h'},j_{h'}+1},s_{j_{h'-1}}\cdots s_{j_2}s_{j_1}\up^{-1}\xi_{l_{h'}+1}\ra,
\end{align*}
where $\{j_{h},j_{h'}\}=\{i-1,i+1\}$ with $h<h'$.
Thus the assertion follows from Lemma \ref{suppw}, Lemma \ref{inequality2} (iii) and Lemma \ref{upsilon claim}.
Therefore our assertion is true for $i\notin \supp_l$.

Let $i\in \supp_l$.
Let $h$ such that $j_{h}=i$.
We set $\supp_{l,h}=\supp_l\setminus \{j_1,\ldots, j_{h}\}$.
If $j_{h}-1,j_{h}+1\notin \supp_{l,h}$, then 
\begin{align*}
&\la \chi_{j_{h},j_{h}+1},\up_{l+1}^{-1}\xi_{l+1}\ra\\
=&\la \chi_{j_{h}+1,j_{h}},s_{j_{h-1}}\cdots s_{j_2}s_{j_1}\up^{-1}\xi_{l_{h}+1}\ra+\la \chi_{j_{h},j_{h}+1},\wt(\bb_{l_{h}+1})+\cdots+\wt(\bb_l)).
\end{align*}
By Lemma \ref{inequality3}, $\la \chi_{j_{h},j_{h}+1},\wt(\bb_{l_{h}+1})+\cdots+\wt(\bb_l))\geq 0$.
Then the assertion follows from Lemma \ref{upsilon claim}.
If $j_{h}-1\notin \supp_{l,h}$ and $j_{h}+1\in \supp_{l,h}$, then
\begin{align*}
&\la \chi_{j_{h},j_{h}+1},\up_{l+1}^{-1}\xi_{l+1}\ra\\
=&\la \chi_{j_{h},j_{h}+1},\up_{l+1}^{-1}\xi_{l_{h'}+1}\ra+\la \chi_{j_{h},j_{h}+1},\wt(\bb_{l_{h'}+1})+\cdots+\wt(\bb_{l})\ra\\
=&\la\chi_{j_{h}+1,j_{h}},s_{j_{h-1}}\cdots s_{j_2}s_{j_1}\up^{-1}\xi_{l_{h'}+1}\ra+\la\chi_{j_{h'},j_{h'}+1},s_{j_{h'-1}}\cdots s_{j_2}s_{j_1}\up^{-1}\xi_{l_{h'}+1}\ra\\
&+\la \chi_{j_{h},j_{h}+1},\wt(\bb_{l_{h'}+1})+\cdots+\wt(\bb_{l})\ra\\
=&\la\chi_{j_{h}+1,j_{h}},s_{j_{h-1}}\cdots s_{j_2}s_{j_1}\up^{-1}\xi_{l_{h}+1}\ra+\la\chi_{j_{h},j_{h}+1}, \wt(\bb_{l_{h}+1})+\cdots+\wt(\bb_{l})\ra\\
&+\la\chi_{j_{h'},j_{h'}+1},s_{j_{h'-1}}\cdots s_{j_2}s_{j_1}\up^{-1}\xi_{l_{h'}+1}\ra,
\end{align*}
where $j_{h'}=j_{h}+1$.
By Lemma \ref{inequality3}, $\la \chi_{j_{h},j_{h}+1},\wt(\bb_{l_{h}+1})+\cdots+\wt(\bb_{l})\ra=0$ implies $\ld_b(j_{h}+1)=\mn+1$.
Hence $j_{h}+1\in \supp(w_\up)$ implies $j_{h}\in \supp(w_\up)$.
Thus $\la \chi_{j_{h},j_{h}+1},\up_{l_h+1}^{-1}\xi_{l_h+1}\ra\geq 0$ by Lemma \ref{upsilon claim}.
The equality holds if and only if one of the following case occurs:
\begin{itemize}
\item $j_{h}\notin \supp(w_\up)$, $j_{h'}\in \supp(w_\up)$ and $\la \chi_{j_{h},j_{h}+1},\wt(\bb_{l_{h}+1})+\cdots+\wt(\bb_{l})\ra=1$,
\item $j_{h}\notin \supp(w_\up)$, $j_{h'}\notin \supp(w_\up)$ and $\la \chi_{j_{h},j_{h}+1},\wt(\bb_{l_{h}+1})+\cdots+\wt(\bb_{l})\ra=0$,
\item $j_{h}\in \supp(w_\up)$, $j_{h'}\in \supp(w_\up)$ and $\la \chi_{j_{h},j_{h}+1},\wt(\bb_{l_{h}+1})+\cdots+\wt(\bb_{l})\ra=0$.
\end{itemize}
In the first case, $\up_{l+1}\chi_{j_{h},j_{h}+1}=\up s_{j_1}\cdots s_{j_{h}-1}\chi_{j_{h}+1,j_{h}}+\up s_{j_1}\cdots s_{j_{h'}-1}\chi_{j_{h'},j_{h'}+1}\in \Phi_-$ by Lemma \ref{upsilon claim}.
In the last two cases, we have $\ld_b^-(\up(j_{h}+1))=\ld_b(j_{h}+1)=\mn+1$ and hence $\up(j_{h}+1)>n-m_0$.
If $j_{h}-1\in \{j_1,\ldots,j_{h-1}\}$, $\up_{l+1}\chi_{j_{h},j_{h}+1}=\up w(\bb)\chi_{j_{h},j_{h}+1}=\tau^m \up\chi_{j_{h},j_{h}+1}\in \Phi_-$ by Corollary \ref{wbCoxeter} (ii) (if $\up(j_{h})>n-m_0$).
If $j_{h}-1\notin \{j_1,\ldots,j_{h-1}\}$, then $\up_{l+1}\chi_{j_{h},j_{h}+1}\in \Phi_-$ follows from Corollary \ref{suppwcoro} (i).
Thus $\up_{l+1}\chi_{j_{h},j_{h}+1}\in \Phi_-$ holds in every case.
The proof for the case where $j_{h}+1\notin \supp_{l,h}$ and $j_{h}-1\in \supp_{l,h}$ is similar.
If $j_{h}-1,j_{h}+1\in \supp_{l,h}$, then by Lemma \ref{inequality3}, $\la \chi_{j_{h},j_{h}+1},\wt(\bb_{l_{h}+1})+\cdots+\wt(\bb_{l})\ra=0$ (resp.\ $1$) implies $(\ld_b(j_{h}),\ld_b(j_{h}+1))=(\mn,\mn+1)$ (resp.\ $\ld_b(j_{h})=\mn$ or $\ld_b(j_{h}+1)=\mn+1$).
Thus the inequality follows similarly as above.
Using Corollary \ref{wbCoxeter} (i), we can also check that the equality implies $\up_{l+1}\chi_{j_{h},j_{h}+1}\in \Phi_-$ in the same way as above.
Therefore our assertion is true for $i\in \supp_l$.
This completes the proof.
\end{proof}

\subsection{End of The Proof}
\label{end}
In this subsection, we finish the proof of Theorem \ref{constructionthm}.
\begin{lemm}
\label{tf}
Let $\bb'\in \B_\mu$ and let $1\le i\le n-1$.
If $\ve_i(\bb')>0$, then let $l$ be the positive integer such that 
$$\FE(\te_i\bb')=\bb'_1\otimes\cdots \otimes \te_i\bb'_l\otimes\cdots\otimes \bb'_d,$$
where $\FE(\bb')=\bb'_1\otimes\cdots\otimes \bb'_d$.
If $\ve_i(\bb')=0$, set $l=0$.
Then the action of $\tf_i^{\phi_i(\bb')}$ on $\bb'$ does not affect the boxes in $\bb'_1,\ldots, \bb'_l$ and the following equality holds:
$$\wt(\tf_i^{\phi_i(\bb')}\bb')=\sum_{j=1}^{l}\wt(\bb'_j)+s_i(\sum_{j=l+1}^{d}\wt(\bb'_j)).$$
\end{lemm}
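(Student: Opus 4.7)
My plan is to analyze the stack-based parenthesization that underlies the $+-$ cancellation in Theorem \ref{ef}, pinning down both the columns touched by $\tf_i^{\phi_i(\bb')}$ and the weight shift it produces. Everything will hinge on a single structural observation about the position $p$ of the rightmost uncancelled $-$ in $u_i(\bb')$.

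First I will identify $\bb'_l$ as the column containing this position $p$. Since the entries of any column are strictly increasing, a column containing both $i$ and $i+1$ contributes a cancelling $+-$ pair; hence $\bb'_l$ contains $i+1$ but not $i$, and its only contribution to $u_i(\bb')$ is the single $-$ at position $p$. Viewing the reduction as a stack that pushes on $+$ and pops on $-$, the fact that the $-$ at position $p$ fails to pop forces the stack to be empty just before $p$ is processed. This empty-stack observation is the main point; once it is established, the rest is bookkeeping. It has two consequences: (a) every $+$ at a position $<p$ is already paired with a $-$ at a position $<p$, so no uncancelled $+$ sits in $\bb'_1,\ldots,\bb'_{l-1}$, and (b) within the substring of $u_i(\bb')$ strictly after $p$, every $-$ cancels internally with some $+$ also sitting after $p$. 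In particular all uncancelled $+$'s of $u_i(\bb')$ lie at positions $q_{+,1}<\cdots<q_{+,\phi_i(\bb')}$, all strictly greater than $p$, and thus all in $\bb'_{l+1},\ldots,\bb'_d$.

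Next I will show by induction on $k$ that $\tf_i^{k}\bb'$ is obtained from $\bb'$ by turning the entries $i\to i+1$ at exactly the positions $q_{+,1},\ldots,q_{+,k}$. Flipping these $+$'s to $-$'s inside $u_i$ yields a string whose reduced form is $-^{\ve_i(\bb')+k}+^{\phi_i(\bb')-k}$ with surviving $+$'s exactly at $q_{+,k+1},\ldots,q_{+,\phi_i(\bb')}$, so Theorem \ref{ef} identifies $q_{+,k+1}$ as the position of the next flip. In particular none of $\bb'_1,\ldots,\bb'_l$ is ever modified, which is the first half of the lemma.

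For the weight identity, set $V=\sum_{j=l+1}^{d}\wt(\bb'_j)$. Consequence (b) above, applied to the counts of $+$ and $-$ in the substring of $u_i(\bb')$ after $p$, gives $\phi_i(\bb')=V(i)-V(i+1)$: in that substring all $V(i+1)$ of the $-$'s are internally cancelled, so exactly $V(i)-V(i+1)$ of the $+$'s survive. Then $\tf_i^{\phi_i(\bb')}$ converts $V(i)-V(i+1)$ entries $i$ into $i+1$ inside $\bb'_{l+1}\otimes\cdots\otimes\bb'_d$, which replaces $V$ by $s_i(V)$, while $\sum_{j=1}^{l}\wt(\bb'_j)$ remains unchanged. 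The claimed identity follows. The only genuinely delicate step is the empty-stack observation at $p$; all other parts are direct combinatorial consequences of it.
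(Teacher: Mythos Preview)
Your proof is correct and follows essentially the same approach as the paper's. Both arguments identify the position $p$ (the paper calls it $\ell$) of the rightmost uncancelled $-$ in $u_i(\bb')$, observe that $(u^{p+1}\cdots)_{\red}=+\cdots+$ contains exactly $\phi_i(\bb')$ symbols, and then compare the effect of $\tf_i^{\phi_i(\bb')}$ with that of $s_i$ on the tail columns by counting $i$'s and $i+1$'s. Your stack-based phrasing of the cancellation and the explicit induction on $k$ for $\tf_i^k$ are cleaner than the paper's terse treatment, but the logic is the same; one small remark is that you should note explicitly that the case $\ve_i(\bb')=0$ (where $l=0$ and there is no position $p$) falls out immediately from the same count.
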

\begin{proof}
We naturally identify $\bb'_j$ and $\wt(\bb'_j)$.
We need to check that the $i,i+1$-th entries in both sides are equal (because other entries are clearly equal).
We write $u_i(\bb')=u^1\cdots u^{\wt(\bb')(i)+\wt(\bb')(i+1)}$.
Let $u^\ell=-$ be the box in $\bb'_l$ (which is changed to $+$ by the action of $\te_i$).
Then $(u^{\ell+1}\cdots u^{\wt(\bb')(i)+\wt(\bb')(i+1)})_{\red}=+\cdots +$ and the number of $+$ here is equal to $\phi_i(\bb')$.
Note that $\tf_i^{\phi_i(\bb')}$ changes all $+$ in this diagram to $-$.
Note also that $\tf_i^{\phi_i(\bb')}$ does not affect the boxes in $\bb'_1,\ldots, \bb'_l$ and ``$+-$'' in $u_i(\bb')$, which we neglect in $u_i(\bb')_{\red}$.
On the other hand, the action of $s_i$ on $\bb'_{l+1},\ldots,\bb'_d$ changes $+$ to $-$ and $-$ to $+$.
It is easy to see the total number of $+(=\fbox{$i$})$ or $-(=\fbox{$i+1$})$ in both sides are equal.
Hence the equality holds.
\end{proof}

\begin{prop}
\label{wbtensor}
Let $\bb\in \B_{\mu}(\ld_b)$ and $\up\in \Upsilon(\bb)$.
Then $$w(\bb)(\xi(\up^{-1}\bb^-)+\up^{-1}\ld_b^-)=\xi(\up^{-1}\bb^-)+\sum_{1\le j\le d}w_1^{-1}\cdots w_{j-1}^{-1}\wt(\bb_j).$$
\end{prop}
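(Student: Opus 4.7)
\emph{Plan.}
The cleanest way forward is to apply $\up$ to both sides and exploit the defining relation $\up w(\bb)\up^{-1}=\tau^m$.  After this conjugation, the identity becomes
\[
\xi_{d+1}(\bb,\up)=\tau^m\bigl(\xi_1(\bb,\up)+\ld_b^-\bigr),
\]
where $\xi_{d+1}(\bb,\up):=\xi_1(\bb,\up)+\sum_{j=1}^d\up w_1^{-1}\cdots w_{j-1}^{-1}\wt(\bb_j)$ is the natural extension of the formula to $j=d+1$.  This is the ``cyclic periodicity'' $\xi_{d+1}=b\xi_1$ expected from $b=\eta^m$ acting on $\Y$ as $v\mapsto \tau^m v+\ld_b^+$, with $\ld_b^+=\tau^m\ld_b^-$ being the dominant conjugate of $\ld_b$.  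Rewriting the LHS of the original identity via the identity $\xi(\bb')+\wt(\bb')=\phi(\bb')+\wt(\bb')(n)\mathbf{1}$ (immediate from Definition~\ref{crystaldefi}(iii), with $\phi(\bb')_i=\sum_{k\ge i}\phi_k(\bb')$) together with the fact that $w(\bb)$ fixes $\mathbf{1}$ further reduces the claim to the cleaner Weyl-theoretic identity $(w(\bb)-1)\phi(\up^{-1}\bb^-)=\sum_j w_1^{-1}\cdots w_{j-1}^{-1}\wt(\bb_j)-\up^{-1}\ld_b^-$.

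Next, I would extend Proposition~\ref{upsilon general} to $l=d+1$: the same case analysis used there for $1\le l\le d$ applies verbatim using the inequalities of Lemmas~\ref{inequality}--\ref{inequality3} and Lemma~\ref{upsilon claim}, yielding $\up_{\xi_{d+1}}=\up w(\bb)^{-1}=\tau^{-m}\up$.  Combining this with Proposition~\ref{upsilon general} for $l=1$ (giving $\up_{\xi_1}=\up$), both $\xi_{d+1}$ and $\tau^m(\xi_1+\ld_b^-)$ are Weyl conjugates of dominant cocharacters with the same conjugating element, so the identity reduces to a comparison of their ``dominant parts''.

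The dominant-part comparison is then a direct telescoping argument along a reduced expression of the partial Coxeter element $w_\up$.  At each step, applying a simple reflection $s_i$ (acting on the crystal as $\tf_i^{\phi_i}$) changes the weight according to Lemma~\ref{tf}: the first $l_i$ tensor factors $\bb_1,\ldots,\bb_{l_i}$ are untouched while $s_i$ acts on the weight-sum of the remaining factors.  Summing the resulting contributions across a reduced expression of $w_\up$ reconstructs $\sum_j w_1^{-1}\cdots w_{j-1}^{-1}\wt(\bb_j)-\up^{-1}\ld_b^-$, while simultaneously tracking $\phi_i(\up^{-1}\bb^-)$ along the way reconstructs $(w(\bb)-1)\phi(\up^{-1}\bb^-)$.

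The main obstacle will be matching the two Weyl-group descriptions that enter the problem: the factors $w_j$ come from the Far-Eastern reading of $\bb^{\op}$ (built from $\te_i$'s corresponding to the Coxeter element $w_{\max}'$), whereas the Weyl reflections that carry $\bb$ to $\up^{-1}\bb^-$ correspond to the partial Coxeter element $w_\up$.  The compatibility between these two descriptions is precisely what Lemma~\ref{suppw} and Corollary~\ref{suppwcoro} were set up to control, together with Lemma~\ref{equality} for the borderline contributions at the columns $l_i$; so the proof will largely consist of assembling those inputs in the order dictated by the telescoping, which is tedious but essentially mechanical once the reduction above is in place.
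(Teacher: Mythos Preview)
Your algebraic reductions are correct (the rewriting via $\phi(\bb')$ and the reformulation as $\xi_{d+1}=b\xi_1$ are fine), but the proof strategy after that has two genuine gaps.

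First, knowing $\up_{\xi_{d+1}}=\tau^{-m}\up$ and $\up_{\xi_1}=\up$ does \emph{not} reduce the identity $\xi_{d+1}=\tau^m(\xi_1+\ld_b^-)$ to a comparison of ``dominant parts''.  Two distinct cocharacters can perfectly well have the same $\up_\ld$; the map $\ld\mapsto\up_\ld$ only records the closed Weyl chamber $\ld$ lies in, so this step gains nothing.  You still have to prove the full equality of cocharacters.

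Second, and more seriously, the telescoping you describe is based on the identification ``$s_i$ acts on the crystal as $\tf_i^{\phi_i}$'', which is false in general: the Weyl action gives $s_i\bb'=\tf_i^{\phi_i(\bb')-\ve_i(\bb')}\bb'$ when $\la\chi_{i,i+1},\wt(\bb')\ra\ge 0$, and here $\ve_i(\up^{-1}\bb^-)$ is typically nonzero.  Lemma~\ref{tf} therefore does not apply directly to $s_i(\up^{-1}\bb^-)$.  The paper's proof circumvents exactly this obstruction by the tensor trick: it works with $\bb_{\xi(\up^{-1}\bb^-)}\otimes\up^{-1}\bb^-$ instead of $\up^{-1}\bb^-$ alone.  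Since $\phi_i(\bb_{\xi(\up^{-1}\bb^-)})=\la\chi_{i,i+1},\xi(\up^{-1}\bb^-)\ra=\ve_i(\up^{-1}\bb^-)$ and $\ve_i(\bb_{\xi})=0$, the tensor-product formulas in Definition~\ref{tensor} give $\ve_i$ of the tensor product equal to $0$, and the Weyl action really does become an iterated $\tf^{\Phi}$ on the second factor only, with $\Phi_{j_h}$ defined inductively.  Once that is in place, an induction on $h$ (using Corollary~\ref{suppwcoro} to control which factor $\bb'_{l_h}$ is hit) shows that $\tf_{j_h}^{\Phi_{j_h}}$ touches precisely the boxes needed, and \emph{then} Lemma~\ref{tf} converts the weight into the desired staircase sum $\sum_j w_1^{-1}\cdots w_{j-1}^{-1}\wt(\bb_j)$.

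A smaller issue: you propose telescoping along a reduced expression of $w_\up$, but the right-hand side involves the full Coxeter element $w(\bb)=w_1^{-1}\cdots w_d^{-1}$; the partial element $w_\up$ only records which simple reflections move a box in the passage $\bb\rightsquigarrow\up^{-1}\bb^-$, not the staircase structure of the $w_j$'s.  The induction must run over a reduced word of $w(\bb)$, not of $w_\up$, with $\supp(w_\up)$ entering only to decide, at each step $j_h$, whether $\tf_{j_h}^{\Phi_{j_h}}$ changes the box in $\bb'_{l_h}$.
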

\begin{proof}
Fix a reduced expression $s_{j_1}s_{j_2}\cdots s_{j_{n-1}}$ of $w(\bb)$.
We define $$\Phi_{j_h}(\bb,\up)\coloneqq
\begin{cases}
\phi_{j_h}(\up^{-1}\bb^-) & (s_{j_h-1}s_{j_h+1}s_{j_h}\le w(\bb))\\
\phi_{j_h}(\up^{-1}\bb^-)+\Phi_{j_h-1}(\bb,\up) & (s_{j_h+1}s_{j_h}s_{j_h-1}\le w(\bb))\\
\phi_{j_h}(\up^{-1}\bb^-)+\Phi_{j_h+1}(\bb,\up) & (s_{j_h-1}s_{j_h}s_{j_h+1}\le w(\bb))\\
\phi_{j_h}(\up^{-1}\bb^-)+\Phi_{j_h-1}(\bb,\up)+\Phi_{j_h+1}(\bb,\up) & (s_{j_h}s_{j_h-1}s_{j_h+1}\le w(\bb))
\end{cases}
$$
inductively from $h=n-1$ to $1$, setting $\Phi_{0}(\bb,\up)=\Phi_{n}(\bb,\up)=0$.
In particular, we have $\Phi_{j_{n-1}}(\bb,\up)=\phi_{j_{n-1}}(\up^{-1}\bb^-)$.
Write $\Phi_{j_h}$ for $\Phi_{j_h}(\bb,\up)$.
First, we prove that 
$$w(\bb)(\bb_{\xi(\up^{-1}\bb^-)}\otimes \up^{-1}\bb^-)=\bb_{\xi(\up^{-1}\bb^-)}\otimes\tf_{j_1}^{\Phi_{j_1}}\tf_{j_2}^{\Phi_{j_2}}\cdots\tf_{j_{n-1}}^{\Phi_{j_{n-1}}}(\up^{-1}\bb^-)$$
by induction (see Example \ref{highest} for $\bb_{\xi(\up^{-1}\bb^-)}$).
Since $$\phi_{j_{n-1}}(\bb_{\xi(\up^{-1}\bb^-)})=\la\chi_{j_{n-1},j_{n-1}+1},\xi(\up^{-1}\bb^-)\ra=\ve_{j_{n-1}}(\up^{-1}\bb^-)$$
and 
\begin{align*}\la\chi_{j_{n-1},j_{n-1}+1},\xi(\up^{-1}\bb^-)+\up^{-1}\ld_b^-\ra&=\ve_{j_{n-1}}(\up^{-1}\bb^-)+\la\chi_{j_{n-1},j_{n-1}+1},\up^{-1}\ld_b^-\ra\\
&=\phi_{j_{n-1}}(\up^{-1}\bb^-)\geq 0,
\end{align*}
we have 
\begin{align*}
s_{j_{n-1}}(\bb_{\xi(\up^{-1}\bb^-)}\otimes \up^{-1}\bb^-)&=\bb_{\xi(\up^{-1}\bb^-)}\otimes\tf_{j_{n-1}}^{\phi_{j_{n-1}}(\up^{-1}\bb^-)}(\up^{-1}\bb^-)\\
&=\bb_{\xi(\up^{-1}\bb^-)}\otimes\tf_{j_{n-1}}^{\Phi_{j_{n-1}}}(\up^{-1}\bb^-),
\end{align*}
cf.\  Definition \ref{tensor}.
Let $l_{h}$ be an integer such that $j_h\in \supp(w_{l_h})$.
We write $\FE(\up^{-1}\bb^-)=\bb'_1\otimes \cdots \otimes \bb'_{d}$.
Let $u=+$ be the leftmost $+$ in $u_{j_{n-1}}(\up^{-1}\bb^-)_{\red}$ if it exists, i.e., $\Phi_{j_{n-1}}\neq 0$.
Note that the action of $s_{j_{n-1}-1}$ and $s_{j_{n-1}+1}$ along the way of computing $\up^{-1}\bb^-$ from $\bb$ does not increase $\phi_i$.
So if $j_{n-1}\in \supp(w_\up)$, then $u$ is the box in $\bb'_{l_{n-1}},\ldots,\bb'_{d}$.
Equivalently, if $u$ is the box in $\bb'_1,\ldots,\bb'_{l_{n-1}-1}$, then $j_{n-1}\notin \supp(w_\up)$.
Moreover, $j_{n-1}\in \supp(w_{l_{n-1}})$ implies $\la \chi_{j_{n-1},j_{n-1}+1},\ld_b\ra=\la \chi_{j_{n-1},j_{n-1}+1},\up^{-1}\ld_b^-\ra=1$ in this case.
By $s_{j_{n-1}-1}s_{j_{n-1}+1}s_{j_{n-1}}\le w(\bb)$, this contradicts to Corollary \ref{suppwcoro} (iv).
Thus $\tf_{j_{n-1}}^{\Phi_{j_{n-1}}}$ does not change the boxes in $\bb'_1,\ldots,\bb'_{l_{n-1}-1}$.
In fact, if $j_{n-1}\in \supp(w_\up)$, then by $s_{j_{n-1}-1}s_{j_{n-1}+1}s_{j_{n-1}}\le w(\bb)$, $u$ must be the box in $\bb'_{l_{n-1}}$ (which is changed from $-$ along the computation $\up^{-1}\bb^-$ from $\bb$).
If $u$ is the box in $\bb'_{l_{n-1}}$ and $j_{n-1}\notin \supp(w_\up)$, then we must have $\la \chi_{j_{n-1},j_{n-1}+1},\ld_b\ra=\la \chi_{j_{n-1},j_{n-1}+1},\up^{-1}\ld_b^-\ra=1$, which contradicts to Corollary \ref{suppwcoro} (iv).
Therefore $\tf_{j_{n-1}}^{\Phi_{j_{n-1}}}$ changes the box in $\bb'_{l_{n-1}}$ if and only if $j_{n-1}\in \supp(w_\up)$.

Assume that 
$$s_{j_{h+1}}s_{j_{h+2}}\cdots s_{j_{n-1}}(\bb_{\xi(\up^{-1}\bb^-)}\otimes \up^{-1}\bb^-)=\bb_{\xi(\up^{-1}\bb^-)}\otimes\tf_{j_{h+1}}^{\Phi_{j_{h+1}}}\tf_{j_{h+2}}^{\Phi_{j_{h+2}}}\cdots\tf_{j_{n-1}}^{\Phi_{j_{n-1}}}(\up^{-1}\bb^-)$$
for some $h<n-1$.
We further assume that for any $h'>h$, $\tf_{j_{h'}}^{\Phi_{j_{h'}}}$ does not change the boxes in $\bb_1,\bb_2,\ldots,\bb_{l_{h'}-1}$, and $\tf_{j_{h'}}^{\Phi_{j_{h'}}}$ changes the box in $\bb_{l_{h'}}$ if and only if $j_{h'}\in \supp(w_\up)$.
It easily follows from Definition \ref{crystaldefi} (i) that 
$$\la\chi_{j_h,j_h+1}, \wt(\bb_{\xi(\up^{-1}\bb^-)}\otimes\tf_{j_{h+1}}^{\Phi_{j_{h+1}}}\tf_{j_{h+2}}^{\Phi_{j_{h+2}}}\cdots\tf_{j_{n-1}}^{\Phi_{j_{n-1}}}(\up^{-1}\bb^-))\ra=\Phi_{j_h}.$$
Moreover, we have
$$\ve_{j_h}(\tf_{j_{h+1}}^{\Phi_{j_{h+1}}}\tf_{j_{h+2}}^{\Phi_{j_{h+2}}}\cdots\tf_{j_{n-1}}^{\Phi_{j_{n-1}}}(\up^{-1}\bb^-))=\ve_{j_h}(\up^{-1}\bb^-).$$
This is obvious if $s_{j_h-1}s_{j_h+1}s_{j_h}\le w(\bb)$.
If $j_h\notin \supp(w_\up)$ and $s_{j_h}s_{j_h-1}\le w(\bb)$ (resp.\ $s_{j_h}s_{j_h+1}\le w(\bb)$), then by the induction hypothesis, $\tf_{j_h-1}^{\Phi_{j_h-1}}$ (resp.\ $\tf_{j_h+1}^{\Phi_{j_h+1}}$) does not change the box in $\bb_1,\ldots,\bb_{l_h}$.
Indeed, if $j_h-1\in \supp(w_\up)$ (resp.\ $j_h+1\in \supp(w_\up)$), then $l_h<l_{h-1}$ (resp.\ $l_h<l_{h+1}$).
Note that the action of $\tf_{j_h-1}^{\Phi_{j_h-1}}$ (resp.\ $\tf_{j_h+1}^{\Phi_{j_h+1}}$) does not increase $\ve_{j_h}$ and hence $\ve_{j_h}(\tf_{j_{h+1}}^{\Phi_{j_{h+1}}}\tf_{j_{h+2}}^{\Phi_{j_{h+2}}}\cdots\tf_{j_{n-1}}^{\Phi_{j_{n-1}}}(\up^{-1}\bb^-))\le \ve_{j_h}(\up^{-1}\bb^-)$.
Note also that if $j_h\notin \supp(w_\up)$, then by Lemma \ref{right} (ii), there exists $0\le l\le l_h$ such that $\la\chi_{j_h+1,j_h},\wt(\bb'_1)+\cdots +\wt(\bb'_{l})\ra=\ve_{j_h}(\up^{-1}\bb^-)$.
Hence this equality holds if $j_h\notin \supp(w_\up)$.
If $j_h\in \supp(w_\up)$, then there exists $0\le l< l_h$ such that $\la\chi_{j_h+1,j_h},\wt(\bb'_1)+\cdots +\wt(\bb'_{l})\ra=\ve_{j_h}(\up^{-1}\bb^-)$ except if $s_{j_h}s_{j_h-1}\le w(\bb)$, $s_{j_h}s_{j_h+1}\le w(\bb)$ and $\la\chi_{j_h, j_h+1},\ld_b\ra= -1$.
In fact, this exceptional case does not occur by Corollary \ref{suppwcoro} (iii).
By the induction hypothesis, $\tf_{j_h-1}^{\Phi_{j_h-1}}$ (resp.\ $\tf_{j_h+1}^{\Phi_{j_h+1}}$) does not change the box in $\bb_1,\ldots,\bb_{l_h-1}$.
So this equality also holds in this case.

Thus, by $\phi_{j_h}(\bb_{\xi(\up^{-1}\bb^-)})=\ve_{j_h}(\up^{-1}\bb^-)$ and the induction hypothesis, we have
$$s_{j_h}s_{j_{h+1}}\cdots s_{j_{n-1}}(\bb_{\xi(\up^{-1}\bb^-)}\otimes \up^{-1}\bb^-)=\bb_{\xi(\up^{-1}\bb^-)}\otimes\tf_{j_h}^{\Phi_{j_h}}\tf_{j_{h+1}}^{\Phi_{j_{h+1}}}\cdots\tf_{j_{n-1}}^{\Phi_{j_{n-1}}}(\up^{-1}\bb^-).$$
Moreover $\tf_{j_{h}}^{\Phi_{j_{h}}}$ does not change the boxes in $\bb'_1,\ldots,\bb'_{l_{h}-1}$, and $\tf_{j_{h}}^{\Phi_{j_{h}}}$ changes the box in $\bb'_{l_{h}}$ if and only if $j_{h}\in \supp(w_\up)$.
Indeed, if $s_{j_h-1}s_{j_h+1}s_{j_h}\le w(\bb)$, then this follows similarly as above.
Assume that $s_{j_h}s_{j_h-1}\le w(\bb)$ or $s_{j_h}s_{j_h+1}\le w(\bb)$.
Note that if $j_{h'}=j_h-1$ (resp.\ $j_h+1$) for some $h<h'\le n-1$, then $l_{h}\le l_{h'}$ and the action of $\tf_{j_{h'}}^{\Phi_{j_{h'}}}$ adds $+$ (resp.\ deletes $-$) in $u_{j_h}(\up^{-1}\bb^-)$. 
Let $u=+$ be the leftmost $+$ in $u_{j_h}(\tf_{j_h}^{\Phi_{j_h}}\tf_{j_{h+1}}^{\Phi_{j_{h+1}}}\cdots\tf_{j_{n-1}}^{\Phi_{j_{n-1}}}(\up^{-1}\bb^-))$.
If $u$ is the box in $\bb'_1,\ldots,\bb'_{l_{h}-1}$, then $j_h\notin \supp(w_\up)$.
By Lemma \ref{wbve} and our assumption, this contradicts to $j_h\in \supp(w_{l_h})$.
Thus $\tf_{j_{h}}^{\Phi_{j_{h}}}$ does not change the boxes in $\bb'_1,\ldots,\bb'_{l_{h}-1}$.
If $j_h\in \supp(w_\up)$, then $u$ is the box in $\bb'_{l_h}$.
Indeed, if $l_h=l_{h'}$ and $j_{h'}\in \supp(w_\up)$, then $\tf_{j_{h'}}^{\Phi_{j_{h'}}}$ changes the box in $\bb'_{l_{h}}$.
If $u$ is the box in $\bb'_{l_h}$ and $j_h\notin \supp(w_\up)$, then $j_h\in \supp(w_{l_h})$ implies $\la\chi_{j_h,j_h+1},\ld_b\ra=1$ and $j_h-1,j_h+1\in \supp(w_{l_h})$.
This contradicts to our assumption that $s_{j_h}s_{j_h-1}\le w(\bb)$ or $s_{j_h}s_{j_h+1}\le w(\bb)$.
Thus $\tf_{j_{h}}^{\Phi_{j_{h}}}$ changes the box in $\bb'_{l_{h}}$ if and only if $j_{h}\in \supp(w_\up)$.
By induction, this finishes the computation of $w(\bb)(\bb_{\xi(\up^{-1}\bb^-)}\otimes \up^{-1}\bb^-)$.

Since
\begin{align*}
w(\bb)(\xi(\up^{-1}\bb^-)+\up^{-1}\ld_b^-)&=\wt(w(\bb)(\bb_{\xi(\up^{-1}\bb^-)}\otimes \up^{-1}\bb^-))\\
&=\xi(\up^{-1}\bb^-)+\wt(\tf_{j_1}^{\Phi_{j_1}}\tf_{j_2}^{\Phi_{j_2}}\cdots\tf_{j_{n-1}}^{\Phi_{j_{n-1}}}(\up^{-1}\bb^-)),
\end{align*}
it remains to show that
$$\wt(\tf_{j_1}^{\Phi_{j_1}}\tf_{j_2}^{\Phi_{j_2}}\cdots\tf_{j_{n-1}}^{\Phi_{j_{n-1}}}(\up^{-1}\bb^-))=\sum_{1\le j\le d}w_1^{-1}\cdots w_{j-1}^{-1}\wt(\bb_j).$$
In the above discussion, we have proved that
$$\phi_{j_h}(\tf_{j_{h+1}}^{\Phi_{j_{h+1}}}\tf_{j_{h+2}}^{\Phi_{j_{h+2}}}\cdots\tf_{j_{n-1}}^{\Phi_{j_{n-1}}}(\up^{-1}\bb^-))=\Phi_{j_h}$$
and that $\tf_{j_{h}}^{\Phi_{j_{h}}}$ changes the box in $\bb_{l_{h}}$ if and only if $j_{h}\in \supp(w_\up)$.
Note that $\Phi_{j_1}\geq \cdots \geq \Phi_{j_{n-1}}$.
Thus we can easily check this equality by applying Lemma \ref{tf} repeatedly.
The proof is finished.
\end{proof}

\begin{proof}[Proof of Theorem \ref{constructionthm}]
We first show 
\begin{align*}
b\xi_1(\bb,\up)=\up\xi(\up^{-1}\bb^-)+\sum_{1\le j\le d}\up w_1^{-1}\cdots w_{j-1}^{-1}\wt(\bb_j). \tag{$\ast$}
\end{align*}
Note that $b=\tau^m \vp^{\ld_b^+}$ as an element of $\tW$, where $\ld_b^+$ is the dominant conjugate of $\ld_b$.
So
\begin{align*}
(\ast)&\Leftrightarrow \tau^m\up \xi(\up^{-1}\bb^-)+\ld_b^+=\up\xi(\up^{-1}\bb^-)+\sum_{1\le j\le d}\up w_1^{-1}\cdots w_{j-1}^{-1}\wt(\bb_j)\\
&\Leftrightarrow \up^{-1}\tau^m\up(\xi(\up^{-1}\bb^-)+\up^{-1}\ld_b^-)=\xi(\up^{-1}\bb^-)+\sum_{1\le j\le d}w_1^{-1}\cdots w_{j-1}^{-1}\wt(\bb_j).
\end{align*}
Since $\up^{-1}\tau^m\up=w(\bb)$, the last equality follows from Proposition \ref{wbtensor}.
This shows $(\ast)$.
By $(\ast)$ and Proposition \ref{upsilon general}, we have $\xi_\bl(\bb, \up)^\flat=\FE(\bb)$.
By Theorem \ref{minuscule bijection}, this implies $\xi_{\bl}(\bb,\up)\in \cA_{\mu_\bl, b_\bl}^\tp,(\Gamma^{G^d})^{-1}(\FE(\bb))=[\xi_{\bl}(\bb,\up)]$ and $\xi_{\bl}(\bb, \up)\sim \xi_{\bl}(\bb, \up')$ for any $\up, \up'\in \Upsilon(\bb)$.
Since $\up_{\xi_1(\bb, \up)}=\up$ and $\up_{\xi_1(\bb, \up')}=\up'$, $\up\neq \up'$ implies $\xi_{\bl}(\bb, \up)\neq \xi_{\bl}(\bb, \up')$.
The proof is finished.
\end{proof}

\bibliographystyle{myamsplain}
\bibliography{reference}
\end{document}